 \documentclass[12pt, reqno]{amsart}
 \usepackage{amsmath, amsthm, amscd, amsfonts, amssymb, graphicx, color}
 \usepackage[bookmarksnumbered, colorlinks, plainpages]{hyperref}

 \newtheorem{theorem}{Theorem}[section]
 \newtheorem{lemma}[theorem]{Lemma}
 \newtheorem{proposition}[theorem]{Proposition}
 \newtheorem{corollary}[theorem]{Corollary}
 \theoremstyle{definition}
 \newtheorem{definition}[theorem]{Definition}
 \newtheorem{example}[theorem]{Example}

 \theoremstyle{remark}
 \newtheorem{remark}[theorem]{Remark}
 \numberwithin{equation}{section}

\makeatletter
\@namedef{subjclassname@2020}{%
  \textup{2020}Mathematics Subject Classification}
\makeatother

\begin{document}

    \setcounter{page}{1}
\title[The Automorphism Group of the Spectral Incidence Graph over F. F.]
{The Automorphism Group of the Spectral Incidence Graph over Finite Fields}

\author[ A. Majidinya ]{ Ali Majidinya }
\address{Department of Mathematics and Computer Science, Faculty of Basic Sciences,
 Salman Farsi University of Kazerun, Kazerun, Iran, P.O. Box 73175-457.}
\email{\textcolor[rgb]{0.00,0.00,0.84}{ali.majidinya@gmail.com and
ali.majidinya@kazerunsfu.ac.ir}}
\subjclass[2020]{ 05C25,\quad 20B25,\quad 51E20,\quad 15A18,\quad 20G40.}

\keywords{ Automorphism group of graphs; Spectral incidence graphs; Eigenvalue relation;
 Eigenspace; Semidirect product of groups; twin points.}
\begin{abstract}
Let $\mathbb{F}_q$ be a finite field with $q$ elements and let
  $\mathbb{V}_0=\mathbb{F}_q^n$ be the $n$-dimensional
vector space over $\mathbb{F}_q$, for an integer
$n\geq2$. We introduce the \emph{spectral
incidence graph} of $\mathbb{V}_0$, denoted by
$\mathbf{SIG}(\mathbb{V}_0)$, a bipartite graph whose two vertex classes
consist of the one-dimensional subspaces of $M_n(\mathbb{F}_q)$
generated by matrices having at least one eigenvector in $\mathbb{V}_0$,
and the one-dimensional subspaces of $\mathbb{V}_0$ (i.e.  the
projective space $\mathbb{P}^{n-1} (\mathbb{F}_q)$), respectively. A matrix vertex
$\langle M\rangle$ and a point vertex $\langle v\rangle$ are adjacent
if and only if $v$ is an eigenvector of $M$. We determine several
structural parameters of $\mathbf{SIG}(\mathbb{V}_0)$, including its twin classes,
connectivity, domination number, diameter, vertex
degrees, and number of edges. We show that every
automorphism of $\mathbf{SIG}(\mathbb{V}_0)$ preserves the two parts of the bipartition.
For $n\geq 3$,  the restriction of every graph automorphism to the part $\mathbb{P}^{n-1} (\mathbb{F}_q)$
 induces a semilinear collineation of
$\mathbb{P}^{n-1} (\mathbb{F}_q)$.
We prove that the kernel of the induced action of
$Aut(\mathbf{SIG}(\mathbb V_0))$ on the
projective part $\mathbb{P}^{n-1} (\mathbb{F}_q)$
 is precisely $\prod_{\mathcal C\in\mathcal T} S_{\mathcal C}$, where $\mathcal T$ denotes
  the set of twin classes of matrix vertices and
 $S_{\mathcal{C}}$ is the symmetric group on $\mathcal{C}$.
 We then determine the full automorphism
group of $\mathbf{SIG}(\mathbb{V}_0)$. For $n\geq3$, we prove that
$Aut(\mathbf{SIG}(\mathbb V_0))
\cong
\left(\prod_{\mathcal C\in\mathcal T} S_{\mathcal C}\right)
\rtimes P\Gamma L_n(q)$.
  For $n=2$, we obtain
$Aut(\mathbf{SIG}(\mathbb V_0))
\cong
\left(
\prod_{i=1}^{q+1}S_q\times \prod_{i=1}^{\frac{q(q+1)}{2}}S_q
\right)
\rtimes S_{q+1}.$
\end{abstract}
\maketitle
\section*{\bf Introduction}%
Graphs constructed from algebraic and geometric structures provide a
natural framework for translating algebraic properties into combinatorial ones.
In particular, graphs associated with finite vector spaces and finite geometries have
been studied extensively,
with emphasis on their structural properties and automorphism groups. For graphs related to
vector spaces; see
\cite{Das0}, \cite{Ma1}, \cite{Majid} and \cite{wong2}. There are also studies of
graphs associated with symplectic, orthogonal, and unitary spaces over
finite fields; see \cite{Gu1} and \cite{Gu2}. The automorphism groups of
such graphs are of particular interest, and several results in this
direction can be found in
\cite{Gu3}, \cite{CMa}, \cite{Majid}, \cite{SMayo}, \cite{XWang1},
\cite{LWang1}, \cite{LWang2} and \cite{wang1}. In particular, for
automorphism groups of graphs associated with vector spaces, we refer to
\cite{Majid}, \cite{XWang1} and the references therein. Automorphism and
 isometric-embedding problems for graphs arising from finite geometric structures have
 also been studied extensively; see for example \cite{KwiatkowskiPankov2017}.\par
 Wong et al. in \cite{wong1} characterized the automorphisms of the
zero-divisor graph whose vertex set consists of all rank-one upper
triangular matrices over a finite field. In \cite{XWang1}, Wang et al.
studied the transformation graph of a vector space over a finite field.
They considered the problem of whether a linear transformation sends a
vector to the zero vector and interpreted this problem in graph-theoretic
terms. They investigated several structural parameters of the resulting
graph, including its diameter and domination number and also studied
its automorphism group.

Following \cite{XWang1}, Wang in \cite{wang1} introduced the dual graph of a
vector space $\mathbb{V}_0$ over a finite field $\mathbb{F}_q$, denoted by
$DG(\mathbb{V})$. This is a bipartite graph with vertex set
$V=X\cup X^*$, where $X$ is the set of one-dimensional subspaces of
$\mathbb{V}_0$ and $X^*$ is the set of one-dimensional subspaces of the
dual space of $\mathbb{V}_0$. A vertex $S\in X$ is adjacent to a vertex
$T\in X^*$ if and only if $f(s)=0$ for all $f\in T$ and all $s\in S$.
Wang determined several structural parameters of $DG(\mathbb{V})$,
including its domination number, independence number, diameter, and
girth, and proved that the graph is distance-transitive. He also
described several automorphisms of the dual graph.

Let $\mathbb{V}_0$ be an $n$-dimensional vector space over a finite field
$\mathbb{F}_q$, and let $\mathbb{T}_0$ denote the set of all linear
functionals on $\mathbb{V}_0$. Put
$\mathbb{V}=\mathbb{V}_0\setminus\{0\}$ and
$\mathbb{T}=\mathbb{T}_0\setminus\{0\}$. Following
\cite[page 13, part (c)]{XWang1}, in \cite{Majid} we introduced the
\emph{linear functional graph of a vector space}. This is a bipartite
graph with vertex set
$V=\mathbb{V}\cup\mathbb{T}$, where $f\in\mathbb{T}$ and
$v\in\mathbb{V}$ are adjacent if and only if $f(v)=0$. In that work,
we characterized the structure of all automorphisms of this
graph and determined the cardinality of its automorphism group.

A common objective in the works cited above is to determine the
automorphism groups of algebraically defined graphs in terms of
well-known and explicitly identifiable algebraic or geometric groups.
In several of these works, particular families of automorphisms or
distinguished subgroups of the full automorphism group are described.
In our previous work \cite{Majid}, we determined the structure of all
 automorphisms of the linear functional graph.
  The present paper continues this line of investigation for a graph whose incidence
relation is governed by eigenvectors of matrices rather than by the
vanishing of linear functionals.

Eigenvalue and eigenvector theory provides a fundamental framework for
understanding the structure of linear transformations and their
associated algebraic objects. From the viewpoint of projective geometry, eigenvectors
determine invariant projective points, or eigendirections, while
eigenspaces induce invariant projective subspaces under the associated
projective action. Over finite fields, these connections between linear
transformations, projective geometry, and spectral properties provide a
natural setting for a graph construction based on eigenvector incidence.

Motivated by these connections and by our previous work \cite{Majid}, we
introduce the \emph{Spectral Incidence Graph} associated with a
finite-dimensional vector space $\mathbb{V}_0$ over $\mathbb{F}_q$,
denoted by $\mathbf{SIG}(\mathbb{V}_0)$. Unlike incidence graphs arising
from vectors and linear functionals, this construction records the
incidence between projective classes of matrices and their invariant
one-dimensional subspaces, thereby incorporating spectral information into a
finite-geometric graph. The main objective of this paper is to determine
the full automorphism group of $\mathbf{SIG}(\mathbb{V}_0)$ and to
investigate several of its structural properties.

Using a combinatorial method we prove that every automorphism of
$\mathbf{SIG}(\mathbb V_0)$ preserves the two parts of its
bipartition; see Lemma~\ref{part to part}, and also for $n\geq3$, in Theorem \ref{collinearity} we show that
 the restriction of every graph automorphism to the
 projective-point part is a semilinear collineation of
 $\mathbb P^{n-1}(\mathbb F_q)$. By the Fundamental Theorem of Projective
Geometry, together with Lemma~\ref{corn3} and
Remark~\ref{formproject}, we show that the image
of the induced action of $Aut(\mathbf{SIG}(\mathbb V_0))$ on the part $\mathbb P^{n-1}(\mathbb F_q)$
 is precisely $P\Gamma L_n(q)$.
 In Theorems \ref{TwinNontrivial} and \ref{twinpoints} we characterize the nontrivial twin classes of $\mathbf{SIG}(\mathbb{V}_0)$
 and we show that in the part $\mathbb{P}^{n-1}(\mathbb{F}_q)$ we have no nontrivial twin class.
 Moreover, we prove that
the kernel of the induced action of $Aut(\mathbf{SIG}(\mathbb V_0))$ on the projective part
$\mathbb P^{n-1}(\mathbb F_q)$ is precisely
$
\prod_{\mathcal C\in\mathcal T}S_{\mathcal C},
$
where $\mathcal T$ is the set of twin classes of matrix vertices and $S_\mathcal{C}$ is the symmetric
group on $\mathcal C$; see Theorem \ref{kernel} and Lemma \ref{S_{q+1}}.
The resulting short exact sequence splits, yielding,  in Theorem \ref{Autn-geq3} for $n\geq3$,
\[
Aut(\mathbf{SIG}(\mathbb V_0))
\cong
\left(\prod_{\mathcal C\in\mathcal T}S_{\mathcal C}\right)
\rtimes P\Gamma L_n(q).
\]
The case $n=2$ is exceptional and is treated separately; in this case,
the image of the induced action of $Aut(\mathbf{SIG}(\mathbb V_0))$ on $\mathbb P^1(\mathbb F_q)$ is
the permutation group $S_{\mathbb P^1(\mathbb F_q)}$ which is isomorphic with  the full symmetric
group $S_{q+1}$ ;
 see Lemma \ref{S_{q+1}}. In Theorem \ref{Autn=2} for $n=2$ we obtain
\[
Aut(\mathbf{SIG}(\mathbb V_0))
\cong
\left(
\prod_{i=1}^{q+1}S_q
\times
\prod_{i=1}^{q(q+1)/2}S_q
\right)
\rtimes S_{q+1}.
\]
The  remainder of the paper is organized as follows.\par
 In Section~\ref{section1}, we establish preliminary results on matrix
theory and projective geometry and introduce the Spectral Incidence
Graph $\mathbf{SIG}(\mathbb{V}_0)$ for
$\mathbb{V}_0=\mathbb{F}_q^n$. We also investigate several basic
structural properties of this graph, including its connectivity,
diameter, domination number, vertex degrees and number of edges.\par
In Section~\ref{section2}, we study the twin classes of $\mathbf{SIG}(\mathbb{V}_0)$.
We characterize the nontrivial twin classes of $\mathbf{SIG}(\mathbb{V}_0)$
 and we show that in the part $\mathbb{P}^{n-1}(\mathbb{F}_q)$ we have no nontrivial twin class.
These results provide the structural
information needed for the determination of the automorphisms of $\mathbf{SIG}(\mathbb{V}_0)$.\par
Finally, in Section \ref{section3}, we determine the full automorphism
group of $\mathbf{SIG}(\mathbb{V}_0)$. For $n\geq3$, we use algebraic
methods together with the Fundamental Theorem of Projective Geometry
to determine the image of the induced action of $Aut(\mathbf{SIG}(\mathbb{V}_0))$ on the
 projective part $\mathbb{P}^{n-1}(\mathbb{F}_q)$ and obtain the
full automorphism group. The automorphism group for the case $n=2$ is exceptional and is treated
separately.
\section{\bf Preliminaries}\label{section1}
Let $\mathcal{G}=(V,E)$ be a simple graph with the vertex set $V$ and
edge set $E$. It is written $u\sim v$ if $u$ and $v$ are adjacent vertices
in the graph. Then
 $\deg(v)$ stands for the  \it degree \rm of $v\in V$, i.e.
the cardinality of the set of all vertices  which are adjacent to
$v$. A graph $\mathcal{G}=(V, E)$ is said to be a\emph{
bipartite graph} with vertex bipartition $V=X\cup Y$ if every edge
of the graph has one end in $X$ and another end in $Y$. A subset $D$
of the vertices of the graph $\mathcal{G}$ is called a \emph{dominating
set}, if every vertex in $V\setminus D$ is adjacent to at least one
vertex of $D$. The minimum cardinality of such a subset is called the
\emph{domination number} of $\mathcal{G}$ and denoted by $\gamma(\mathcal{G})$.
 For a bipartite graph
$\mathcal{G}$ with vertex bipartition $V=X\cup Y$, a subset $Z$ of $X$
is said to be a dominating set of $Y$ if any vertex in $Y$ is
adjacent to at least one vertex of $Z$, and the domination number of
$Y$ in this bipartite graph denoted by $\gamma_Y(\mathcal{G})$ is the minimum cardinality
 of a dominating set of $Y$.
Two simple nonempty graphs $\mathcal{G}=(V,E)$ and $\mathcal{G}'=(V',E')$ are
said to be \emph{isomorphic} if there exists a one to one
correspondence $\rho:V\rightarrow V'$, such that for every  $x,y\in
V$, $x$ and $y$ are adjacent in $\mathcal{G}$ if and only if $\rho(x)$
and $\rho(y)$ are adjacent in $\mathcal{G}'$. Such a $\rho$ is said to
be a \emph{graph isomorphism} from $\mathcal{G}$ to $\mathcal{G}'$. If
$\rho$ is a graph isomorphism form the graph $\mathcal{G}$ to $\mathcal{G}$,
then $\rho$ is said to be an \emph{automorphism} of the graph
$\mathcal{G}$. The set of all automorphisms of a graph $\mathcal{G}$ is denoted by
$Aut(\mathcal{G})$. For a simple graph $\mathcal{G}=(V, E)$ and any nonempty subset $A$
of $V$, we denote the neighbor of the set $A$ by $N(A)=\{x\in V\mid x
\sim a$ for some $a\in A\}$. For simplicity, if $A=\{a\}$, we
denote $N(A)$ by $N(a)$. The \emph{maximum degree} and \emph{minimum degree} of a graph
$\mathcal{G}$ are denoted by $\Delta(\mathcal{G})$ and $\delta(\mathcal{G})$, respectively,
and are defined as $\Delta(\mathcal{G})=\max\{\deg(v):v\in V\},\,
\delta(\mathcal{G})=\min\{\deg(v):v\in V\}.$\par
Throughout this paper, we assume that $\mathbb{F}_q$ is a finite field with $q$
elements, $\mathbb{V}_0=\mathbb{F}_q^n$ is the $n$-dimensional column vector space over $\mathbb{F}_q$,
 for an integer $n\geq 2$.
 The set of $n\times n$ matrices over $\mathbb{F}_q$ is
   denoted by $M_n(\mathbb{F}_q)$.  The $n\times n$ unit matrix is denoted
   by $I_n$. An element $\lambda\in\mathbb{F}_q$ is said to be an eigenvalue of a matrix
   $A\in M_n(\mathbb{F}_q)$, if there exists a non-zero vector $v\in \mathbb{V}_0$
 such that $Av=\lambda v$. Then such a non-zero vector $v$ is said to be an
 eigenvector of $A$. The set of all $n\times n$
  matrices over $\mathbb{F}_q$ with at least one eigenvector is
  denoted by $E_n(\mathbb{F}_q)$.  The subspace generated by a subset
  $U$ of any vector space $V$ is denoted by $\langle U\rangle$  and for
   the simplicity, when $U=\{u\}$ for a $u\in V$, we denote by $\langle u\rangle$ instead
   of $\langle \{u\}\rangle$ and  in this case, sometimes we say the elements of $\langle v\rangle$ are
    \emph{projectively represented by} $v$.
The \emph{general linear group of degree $n$} over the field $\mathbb{F}_q$ is
 $GL_n(\mathbb{F}_q)=\{ A\in M_n(\mathbb{F}_q): \text{ A is an invertible matrix}\}$
 with matrices product. The symmetric (permutation) group of a set $X$ is
  denoted by $S_X$ and the symmetric (permutation) group of the set $\{1,\cdots, n\}$
   is denoted by $S_n$, where $n\in \mathbb{N}$.
    For more definitions and information about the vector spaces see \cite{Hoffman}.\par
The following Lemma is used in our results along the paper
( for its similar version see \cite[page 185]{S.Roman}).
\begin{lemma} \label{changeB} Let $A\in M_n(\mathbb{F}_q)$ ba a
matrix having an eigenvector $v$ in $\mathbb{F}^n_q$.
 If we change the basis of the vector space to a basis $\mathcal{B}$, then the matrix
  $[A]_{\mathcal B}$ has the eigenvector $[v]_{\mathcal B}$ in the basis $\mathcal{B}$,
   where $[A]_{\mathcal B}$ and $[v]_{\mathcal B}$
  are the representations of $A$ and $v$ in the basis $\mathcal{B}$, respectively.
 \end{lemma}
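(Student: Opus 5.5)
The plan is to use the change-of-basis matrix directly and reduce the statement to an algebraic identity. Let $\mathcal B=\{b_1,\dots,b_n\}$ be the new basis and let $P\in GL_n(\mathbb F_q)$ be the matrix whose $i$-th column is $b_i$, written in the standard basis. Then every $x\in\mathbb F_q^n$ has coordinate vector $[x]_{\mathcal B}=P^{-1}x$. The linear map $x\mapsto Ax$ is represented in $\mathcal B$ by $[A]_{\mathcal B}=P^{-1}AP$, because
\[
[Ax]_{\mathcal B}=P^{-1}Ax=(P^{-1}AP)(P^{-1}x)=[A]_{\mathcal B}[x]_{\mathcal B}.
\]
I will first record these two standard facts, namely the coordinate map $x\mapsto P^{-1}x$ and the similarity formula for $[A]_{\mathcal B}$.

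Next, suppose $Av=\lambda v$ with $v\neq 0$ and $\lambda\in\mathbb F_q$. I will compute
\[
[A]_{\mathcal B}[v]_{\mathcal B}=P^{-1}AP\,P^{-1}v=P^{-1}Av=P^{-1}(\lambda v)=\lambda P^{-1}v=\lambda[v]_{\mathcal B}.
\]
Since $P^{-1}$ is invertible and $v\neq 0$, the vector $[v]_{\mathcal B}=P^{-1}v$ is nonzero. Hence $[v]_{\mathcal B}$ is an eigenvector of $[A]_{\mathcal B}$, and the eigenvalue $\lambda$ is unchanged.

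There is no real obstacle here. The only point that needs care is fixing the convention for $[A]_{\mathcal B}$, whether one writes $P^{-1}AP$ or $PAP^{-1}$ depending on how $P$ is defined. The coordinate map must use the same $P$ as the similarity, so that the middle factors $PP^{-1}$ cancel. It is also worth remarking that the correspondence $v\mapsto [v]_{\mathcal B}$ is bijective on eigenvectors, with the inverse given by $w\mapsto Pw$. This bijectivity will be useful later, when the lemma is applied to transport adjacencies in $\mathbf{SIG}(\mathbb V_0)$ under $GL_n$.
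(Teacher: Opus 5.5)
Your proof is correct and follows the paper's argument: set $[A]_{\mathcal B}=P^{-1}AP$ and $[v]_{\mathcal B}=P^{-1}v$, then cancel $PP^{-1}$ to obtain $[A]_{\mathcal B}[v]_{\mathcal B}=\lambda[v]_{\mathcal B}$. Your check that $[v]_{\mathcal B}\neq 0$, which the paper leaves implicit, is a worthwhile addition, since an eigenvector must be nonzero.
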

\begin{proof} Let $A$ be a matrix in the standard basis with an eigenvector $v$
and let $P$ be the change-of-basis matrix. Then the matrix representation of $A$
in the new basis $\mathcal{B}$ is
$[A]_{\mathcal B}=P^{-1}AP$  and the vector representation of $v$ in the new basis $\mathcal{B}$ is
$[v]_{\mathcal B}=P^{-1}v$.
We show that the eigenvalue relation (i.e.  eigenvector relation)  $ Av=\lambda v$
does not depend on the basis. To see this we have:\[[A]_{\mathcal B}[v]_{\mathcal B}
=(P^{-1}AP)(P^{-1}v) =
P^{-1}Av = P^{-1}(\lambda v)
=\lambda(P^{-1}v)
=\lambda [v]_{\mathcal B}.\]
Hence$[A]_{\mathcal B}[v]_{\mathcal B}
=\lambda [v]_{\mathcal B}$.
Therefore the eigenvector relation is preserved under change of basis.
\end{proof}
\begin{remark}\label{similarmatrices} Two matrices $A , B\in M_n(\mathbb{F}_q)$ are said to be
similar if there exists an invertible matrix $P\in GL_n(\mathbb{F}_q)$ such that $PAP^{-1}=B$.
 It is well known that the similar matrices have the same set of eigenvalues (see \cite{Hoffman}).
\end{remark}
\begin{theorem}\label{intersection} Let  $n\geq 2$ be a positive integer and
 $S=\{v_1,\cdots,v_k \}$  a set of $k$ linearly independent
  non-zero vectors in $\mathbb{F}_q^n$, for an arbitrary $1\leq k\leq n$.
  Then there exists exactly $q^{n^2-kn+k}$
  matrices such that every $v_i$ is an eigenvector of all these matrices.
\end{theorem}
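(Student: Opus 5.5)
We count the matrices $A\in M_n(\mathbb{F}_q)$ for which each $v_i$ is an eigenvector. The plan is to reduce, via Lemma~\ref{changeB}, to the case where $v_1,\dots,v_k$ are the first $k$ standard basis vectors, and then read the eigenvector conditions off the columns of $A$.

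First, since $v_1,\dots,v_k$ are linearly independent, we extend them to a basis $\mathcal B=\{v_1,\dots,v_n\}$ of $\mathbb{F}_q^n$ and let $P$ be the invertible matrix with columns $v_1,\dots,v_n$. The map $A\mapsto P^{-1}AP$ is a bijection of $M_n(\mathbb{F}_q)$, with inverse $B\mapsto PBP^{-1}$. By Lemma~\ref{changeB}, $Av_i=\lambda v_i$ holds if and only if $(P^{-1}AP)e_i=\lambda e_i$, since $[v_i]_{\mathcal B}=P^{-1}v_i=e_i$. The lemma gives one direction; applying it to $P^{-1}$ gives the other. So this bijection carries the set of matrices having all $v_i$ as eigenvectors onto the set of matrices having $e_1,\dots,e_k$ as eigenvectors, and it suffices to count the latter.

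Next, $e_i$ is an eigenvector of $B$ exactly when $Be_i=\lambda_i e_i$ for some $\lambda_i\in\mathbb{F}_q$. Since $Be_i$ is the $i$-th column of $B$, this says the $i$-th column is $\lambda_i e_i$: its diagonal entry $\lambda_i$ is arbitrary and its other $n-1$ entries are zero. Here $\lambda_i=0$ is allowed, because $e_i\neq 0$ is then in the kernel of $B$. The remaining columns $k+1,\dots,n$ are completely free.

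Counting gives $q^k$ choices for the first $k$ columns and $q^{n(n-k)}$ for the rest, so the total is $q^{k+n^2-kn}=q^{n^2-kn+k}$, as claimed. The only delicate step is verifying that conjugation by $P$ is a bijection preserving the eigenvector condition in both directions. This is immediate from Lemma~\ref{changeB} applied to $P$ and to $P^{-1}$.
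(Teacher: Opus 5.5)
Your proof is correct and follows essentially the same route as the paper: extend $v_1,\dots,v_k$ to a basis, pass to coordinates via Lemma~\ref{changeB}, force the first $k$ columns to be $\lambda_i e_i$, and leave the remaining $n(n-k)$ entries free, giving $q^k\cdot q^{n(n-k)}$. Your explicit observation that $A\mapsto P^{-1}AP$ is a bijection of $M_n(\mathbb{F}_q)$ preserving the eigenvector condition in both directions makes precise a step the paper leaves implicit.
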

\begin{proof}
We want to count the number of matrices $A\in M_n(\mathbb{F}_q)$ such that every element of
$\{v_1, \cdots , v_k\}$ is an eigenvector of $A$. That means for each $A$ there exist
 $\lambda_1, \cdots ,\lambda_k\in \mathbb{F}_q$ such that
$Av_i=\lambda_i v_i$. Since $v_1,\ldots,v_k$ are linearly independent, we can extend $\{v_1,\cdots,v_k \}$
to a basis $\mathcal B=\{v_1,\ldots,v_k,  \dots, v_n\}$ of $\mathbb{F}_q^n$. Using Lemma \ref{changeB} consider the matrix
 $A$ with respect to this basis. Because $[v_i]_\mathcal{B}=e_i$ (the vector with its $i$th entry is $1$ and the others is $0$)  is the $i$th  vector of basis and
$Av_i=\lambda_i v_i$, the $i$th column of the matrix representation of $A$ in the basis
$\mathcal{B}$ is:
\[
\begin{pmatrix}
0\\
\vdots\\
\lambda_i\\
0\\
\vdots\\
0
\end{pmatrix}.
\]
Hence in the basis $\mathcal{B}$ every such matrix has the form
\[
A=
\begin{pmatrix}
\lambda_1 & 0 & \cdots &0&*&*&\cdots&* \\
0 &  \lambda_2& \cdots&0&*&*&\cdots&*\\
\vdots & \vdots & \ddots&\vdots&\vdots&\vdots&\vdots&\vdots\\
0 & 0 & \cdots &\lambda_k&*&*&\cdots&*\\
0 & 0 & \cdots &0& *&*&\cdots&*\\
\vdots & \vdots & \ddots & \vdots&\vdots&\vdots&\vdots&\vdots\\
0 & 0 & \cdots &0& *&*&\cdots&*\\
\end{pmatrix}.
\]
 and conversely, every matrix of the above form has each of the vectors $[v_i]_B$
 as one of its eigenvectors. Now, since
 $\lambda_i$ has $q$ possible choices for every $1\leq i\leq k$ and
 the remaining $n(n-k)$ entries are arbitrary elements of $\mathbb{F}_q$,
the total number of such matrices in basis $\mathcal{B}$ is $q^k\cdot q^{n(n-k)}=q^{n^2-kn+k}$.
 So using Lemma \ref{changeB} the cardinality of the
  set $\{ A\in M_n(\mathbb{F}_q): v_i \text{ is an eigenvector of \emph{A} for all }  1\leq i\leq k\}$
  is $q^{n^2-kn+k}$ and this  completes the proof.
\end{proof}
\begin{theorem}\label{count} For the field $\mathbb{F}_q$ the following statements hold:
\begin{enumerate}
  \item  For every non-zero vertex $v\in \mathbb{F}_q^n$,  the number of matrices
   $M\in M_n(\mathbb{F}_q)$ such that $v$ is an eigenvector of $M$ is $q^{n^2-n+1}$.
  \item  Let $\mathcal{B}=\{v_1,\cdots,v_n \}$ be a basis of  $\mathbb{F}^n_q$
   and $E_\mathcal{B}$  is the set of all $n\times n$ matrices with at least one
    eigenvector in $\mathcal{B}$. Then $|E_\mathcal{B}|=q^{n^2}-q^{n}\bigl(q^{n-1}-1\bigr)^n$.
\end{enumerate}
\end{theorem}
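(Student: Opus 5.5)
Part (1) is the case $k=1$ of Theorem~\ref{intersection}. A single non-zero vector $v$ is a linearly independent set, so the number of matrices having $v$ as an eigenvector is $q^{n^2-n+1}$. For completeness one can also see this directly. Extend $v$ to a basis and use Lemma~\ref{changeB}: the first column of the representing matrix must be $(\lambda,0,\dots,0)^T$, so there are $q$ choices for $\lambda$ and $q^{n(n-1)}$ choices for the remaining columns.

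For part (2) I would count the complement. That is, I would count matrices $M$ for which no $v_i$ is an eigenvector, and subtract this from $q^{n^2}$. By Lemma~\ref{changeB} we may work in the basis $\mathcal B$ itself, so that $v_i$ corresponds to $e_i$. Then $v_i$ is an eigenvector of $M$ exactly when the $i$th column of $[M]_{\mathcal B}$ has all off-diagonal entries equal to zero, with the diagonal entry arbitrary. Since conjugation by the change-of-basis matrix is a bijection of $M_n(\mathbb F_q)$, the counts agree.

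The key observation is that these conditions are column-by-column independent. The condition "$e_i$ is not an eigenvector" constrains only the $i$th column: it says the $n-1$ off-diagonal entries of column $i$ are not all zero. Each column then has $q$ choices for the diagonal entry and $q^{n-1}-1$ choices for the off-diagonal part, giving $q(q^{n-1}-1)$ admissible columns. As the columns are chosen independently, the number of matrices with none of the $v_i$ as an eigenvector is $q^n(q^{n-1}-1)^n$. Hence $|E_{\mathcal B}|=q^{n^2}-q^{n}(q^{n-1}-1)^n$.

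As a cross-check, I would also run inclusion–exclusion using Theorem~\ref{intersection}. Any $k$ of the $v_i$ are linearly independent, so each $k$-subset contributes $q^{n^2-kn+k}$, and
\[
|E_{\mathcal B}|=\sum_{k=1}^n(-1)^{k+1}\binom nk q^{n^2-k(n-1)} .
\]
Using the binomial theorem, this equals $q^{n^2}-q^{n^2}(1-q^{-(n-1)})^n$, which simplifies to the same expression. The main obstacle is simply justifying the independence of the column conditions, or equivalently getting the inclusion–exclusion bookkeeping right. The direct complement count makes this transparent, so I would present that argument and mention inclusion–exclusion only as confirmation.
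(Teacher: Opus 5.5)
Your proof is correct. Part (1) matches the paper exactly: it is the case $k=1$ of Theorem~\ref{intersection}.

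For part (2) the paper does not count the complement. It writes $E_{\mathcal B}=\bigcup_i E(v_i)$ and applies inclusion--exclusion with the intersection sizes $q^{n^2-kn+k}$ from Theorem~\ref{intersection}. It then collapses the alternating sum $\sum_{k}(-1)^{k-1}\binom nk q^{n^2-k(n-1)}$ using the binomial theorem with $x=q^{-(n-1)}$. That is precisely the computation you relegate to a cross-check.

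Your main argument instead works in the basis $\mathcal B$. It observes that the condition ``$v_i$ is an eigenvector'' involves only column $i$ of $[M]_{\mathcal B}$. The complement is therefore a product of $n$ independent column choices, each with $q(q^{n-1}-1)$ options. This route has three advantages:
\begin{enumerate}
\item it is shorter;
\item it avoids Theorem~\ref{intersection} for general $k$ altogether;
\item it explains where the product shape $q^n(q^{n-1}-1)^n$ comes from.
\end{enumerate}
The paper's intersection counts $q^{n^2}\cdot(q^{-(n-1)})^k$ are just a reflection of this independence. Likewise, the identity $1-(1-x)^n$ is nothing but the inclusion--exclusion expansion of your product.

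The paper's route has the merit of reusing a counting result it needs anyway, for Theorem~\ref{twinpoints} and Theorem~\ref{collinearity}. For this particular statement, though, your column-independence argument is the more transparent one.
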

\begin{proof} (1) In Theorem \ref{intersection} let $S=\{v_1=v\}$.\\
(2) Assume $\mathcal{B}=\{v_1, v_2,\cdots, v_n\}$ is a basis for $\mathbb{F}_q^n$. For each $1\leq i\leq n$ let
$E(v_i)=\{M\in E_n(\mathbb{F}_q): v_i \text{ is an eigenvector of } M\}$.
  Clearly $E_\mathcal{B}= \bigcup_{i=1}^nE(v_i)$ and hence by inclusion- exclusion law, we have
\[|E_\mathcal{B}|=\sum_{k=1}^{n} |E(v_k)|-\sum_{1\leq i\neq j\leq n} |E(v_i)\cap E(v_j)|+ \cdots +
(-1)^{n-1} |E(v_1)\cap \cdots \cap E(v_n)|\]
Note that by Theorem \ref{intersection}, we have $|E(v_k)|=q^{n^2-n+1}$ for every $1\leq k\leq n$.
Also by Theorem \ref{intersection} for every $1\leq i\neq j\leq n$
we have $|E(v_i)\cap$$E(v_j)|= q^{n^2-2n+2}$ and
 similarly, for every subset $\{i_1,i_2,\cdots,i_k\}$ of $\{1,2,\cdots, n\}$ we have:
\[|E(v_{i_1})\cap E(v_{i_2})\cap\cdots\cap E(v_{i_k})|= q^{n^2-kn+k}.\]
So \[|E_\mathcal{B}|=\binom{n}{1}q^{n^2-n+1} -\binom{n}{2}q^{ n^2-2n+2}+ \cdots +
(-1)^{n-1}\binom{n}{n}q^{n^2-nn+n}.\]
Therefore, \[|E_\mathcal{B}|=\sum_{k=1}^{n}(-1)^{k-1}\binom{n}{k}q^{\,n^2-kn+k}.\]
Let $a_k=(-1)^{k-1}\binom{n}{k}q^{\,n^2-kn+k}$,
for every $1\le k\le n$. We want to compute
$S=$$\sum_{k=1}^{n}(-1)^{k-1}\binom{n}{k}q^{\,n^2-kn+k}$.
First we rewrite the exponent as $n^2-kn+k=n^2-k(n-1)$. Hence
$S=q^{n^2}\sum_{k=1}^{n}
(-1)^{k-1}\binom{n}{k}q^{-k(n-1)}$. Now, let
$x=q^{-(n-1)}$. Then
$
S=q^{n^2}\sum_{k=1}^{n}
(-1)^{k-1}\binom{n}{k}x^k$. Using the binomial Theorem we have:
\[(1-x)^n =\sum_{k=0}^{n}\binom{n}{k}(-1)^k x^k,\]
we obtain $\sum_{k=1}^{n}
(-1)^{k-1}\binom{n}{k}x^k
=1-(1-x)^n$. So $S=q^{n^2}\Bigl[1-(1-q^{-(n-1)})^n\Bigr].$ Then
removing negative powers, we have $S=q^{n^2}-q^{n^2-n(n-1)}\bigl(q^{n-1}-1\bigr)^n.$
Since $n^2-$$n(n-1)=n$, we get
$S=q^{n^2}-q^{n}\bigl(q^{n-1}-1\bigr)^n$.
Thus
$|E_\mathcal{B}|=$$q^{n^2}-q^{n}\bigl(q^{n-1}-1\bigr)^n$.

\end{proof}
 Along the paper the number of matrices over the
  finite field $\mathbb{F}_q$ for which three
   prescribed vectors are eigenvectors is needed, when the vectors are pairwise
   linearly independent but as a triple are linearly dependent.
\begin{lemma}\label{collinear}
Let $v_1,v_2,v_3\in \mathbb{F}_q^n$ be pairwise linearly
independent vectors such that $v_1,v_2,v_3$ are linearly dependent.
Then the number of matrices $M\in M_n(\mathbb{F}_q)$ for which every
$v_i$ is an eigenvector of $M$ is $q^{(n-1)^2}$.
\end{lemma}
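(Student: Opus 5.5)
Since the three vectors are pairwise independent but dependent, they span a $2$-dimensional subspace $W=\langle v_1,v_2\rangle$, and $v_3=\alpha v_1+\beta v_2$ with $\alpha,\beta\neq0$ (if either coefficient vanished, $v_3$ would be proportional to $v_1$ or $v_2$). Following the proof of Theorem~\ref{intersection}, I will extend $\{v_1,v_2\}$ to a basis $\mathcal B=\{v_1,v_2,\dots,v_n\}$ of $\mathbb F_q^n$. By Lemma~\ref{changeB} it suffices to count the matrices $A$ in the basis $\mathcal B$ for which $e_1$, $e_2$ and $\alpha e_1+\beta e_2$ are eigenvectors; the change-of-basis map $A\mapsto P^{-1}AP$ is a bijection on $M_n(\mathbb F_q)$, so the count is unchanged.

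The key step is to show that these conditions force $A$ to act as a scalar on $W$. From Theorem~\ref{intersection} with $k=2$, the conditions $Ae_1=\lambda_1e_1$ and $Ae_2=\lambda_2e_2$ mean that the first two columns of $A$ are $\lambda_1e_1$ and $\lambda_2e_2$. Then
\[
A(\alpha e_1+\beta e_2)=\alpha\lambda_1e_1+\beta\lambda_2e_2 .
\]
Requiring this to equal $\mu(\alpha e_1+\beta e_2)$ and comparing coefficients gives $\alpha\lambda_1=\mu\alpha$ and $\beta\lambda_2=\mu\beta$. Since $\alpha,\beta\neq0$, this yields $\lambda_1=\lambda_2=\mu$. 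Conversely, if $\lambda_1=\lambda_2$, then every nonzero vector of $W$, and in particular $v_3$, is an eigenvector. So the admissible matrices are exactly those whose first two columns are $\lambda e_1$ and $\lambda e_2$ for a common $\lambda\in\mathbb F_q$, with the remaining columns arbitrary.

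Counting is then immediate. There are $q$ choices for $\lambda$ and $q^{n(n-2)}$ choices for the last $n-2$ columns, giving
\[
q\cdot q^{n^2-2n}=q^{n^2-2n+1}=q^{(n-1)^2}.
\]
The only point needing care is the nonvanishing of $\alpha$ and $\beta$, which comes from pairwise independence. I do not expect any genuine obstacle.
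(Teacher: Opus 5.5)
Your proposal is correct and follows essentially the same route as the paper: both deduce $\lambda_1=\lambda_2=\lambda_3$ from $v_3=\alpha v_1+\beta v_2$ with $\alpha,\beta\neq0$, then count, in a basis extending $\{v_1,v_2\}$, the matrices whose first two columns are $\lambda e_1$ and $\lambda e_2$. Your count $q\cdot q^{n(n-2)}$ is the paper's $q\cdot q^{2(n-2)}\cdot q^{(n-2)^2}$ with the last $n-2$ columns taken together rather than split into the blocks $A$ and $B$.
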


\begin{proof}
Let $W=\langle v_1,v_2,v_3\rangle$. Since the vectors are pairwise
linearly independent but are linearly dependent as a triple, we have
$dim W=2$. Thus, there exist non-zero scalars $a,b\in\mathbb{F}_q$
such that $v_3=av_1+bv_2$.
Suppose that $Mv_i=\lambda_i v_i$ for $i=1,2,3$. Then
$
Mv_3=aMv_1+bMv_2
=a\lambda_1v_1+b\lambda_2v_2.$
On the other hand, since $v_3$ is also an eigenvector of $M$,
$Mv_3=\lambda_3v_3
=a\lambda_3v_1+b\lambda_3v_2$.
Since $v_1$ and $v_2$ are linearly independent and $a,b$  are non-zero, it
follows that
$\lambda_1=\lambda_2=\lambda_3=:\lambda$.
Extend $\{v_1,v_2\}$ to a basis
$\mathcal{B}=\{v_1,v_2,w_3,\ldots,w_n\}$
of $\mathbb{F}_q^n$. Relative to this basis, every matrix $M$ for
which $v_1,v_2,v_3$ are eigenvectors has the form
$M=
\begin{pmatrix}
\lambda I_2 & A\\
0 & B
\end{pmatrix},$
where $A\in M_{2\times(n-2)}(\mathbb{F}_q)$ and
$B\in M_{n-2}(\mathbb{F}_q)$. Conversely, every matrix of this form
satisfies $Mv_1=\lambda v_1,
Mv_2=\lambda v_2$, and since $v_3=av_1+bv_2$,
$Mv_3=aMv_1+bMv_2
=\lambda(av_1+bv_2)
=\lambda v_3.$
Thus, every matrix of the above form has all three vectors as
eigenvectors. There are $q$ choices for $\lambda$, $q^{2(n-2)}$ choices for $A$
and $q^{(n-2)^2}$ choices for $B$. Hence the number of such matrices is
$q\cdot q^{2(n-2)}\cdot q^{(n-2)^2}
=q^{1+2(n-2)+(n-2)^2}
=q^{(n-1)^2}.$
This completes the proof.
\end{proof}

\begin{remark}\label{rem2}
Note that if $v, w \in\mathbb{V}_0$ are non-zero vectors
 such that $w=\alpha v$ for a non-zero $\alpha \in \mathbb{F}_q$,
then for every matrix $M\in M_n(\mathbb{F}_q)$ and every $\lambda\in \mathbb{F}_q $ we have:
\[
Mv=\lambda v
\iff
Mw=\lambda w.
\]
Therefore, for $\langle v\rangle$ ( the one-dimensional subspace
 of $\mathbb{V}_0$ generated by the non-zero vector $v$), we have
  $Mv= \lambda v$ for a $\lambda\in \mathbb{F}_q$ if and only if
   $Mv\in\langle v\rangle$. This means that $\langle v\rangle$ is stabilized
   by the linear transformation $T: \mathbb{V}_0\to \mathbb{V}_0$, defined $T(x)=Mx$
   for every $x\in\mathbb{V}_0$, if and only if $v$ is an eigenvector of $M$.
\end{remark}

\begin{remark}\label{rem3}
Let $M, N \in E_n(\mathbb{F}_q)$, such that $N=\alpha M$ for a non-zero $\alpha \in \mathbb{F}_q$.
Then for every $0\neq v\in\mathbb{V}_0$, $v$ is an eigenvector of $M$ if and only if $v$ is an eigenvector
of $N$, as:
\[Mv=\lambda v
\iff
\alpha Mv=\alpha\lambda v \iff Nv=(\alpha\lambda)v
.\]
So for $\langle M\rangle$ ( the one-dimensional subspace of $M_n(\mathbb{F}_q)$
 generated by the non-zero matrix $M$), we have
   $Mv= \lambda v$ if and only if  $v$ is an eigenvector of $\alpha M$
    for every $0\neq\alpha\in \mathbb{F}_q$, if and only if every matrix in the
     $\langle M \rangle$ stabilizes the projective point $\langle v\rangle$.
     Therefore, all  elements of
    $\langle M\rangle$ have the same eigenvectors.
     Sometimes we say the elements of $\langle M\rangle$ are
    \emph{projectively represented by} $M$.
\end{remark}
Recall that the projective space $\mathbb P^{n-1}(\mathbb{F}_q)$
is defined by \[\mathbb P^{n-1}(\mathbb{F}_q)
=\{\langle v\rangle:0\neq v\in\mathbb{F}_q^n\}.\]
Thus, the points of $\mathbb P^{n-1}(\mathbb{F}_q)$ are precisely the
one-dimensional subspaces of $\mathbb{F}_q^n$.\\
Now, by Remarks \ref{rem2} and  \ref{rem3} we have the following Lemma:
\begin{lemma}\label{projectivepoints}
For the field $\mathbb{F}_q$ the following statements hold:
\begin{enumerate}
\item Each point of $\mathbb P^{n-1}(\mathbb{F}_q)$ corresponds to a
 one-dimensional subspace of $\mathbb{F}_q^n$.
\item Vectors differing by a non-zero scalar represent the same projective point.
\item $u\in \langle v\rangle$ is an eigenvector of the matrix $M$  if and only if every element of
$\langle M\rangle$ as a linear transformation of $\mathbb{V}_0$ stabilize $\langle v\rangle$ .
\end{enumerate}
\end{lemma}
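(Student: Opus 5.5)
The lemma is essentially a repackaging of the definition of $\mathbb P^{n-1}(\mathbb{F}_q)$ together with Remarks \ref{rem2} and \ref{rem3}. I will prove the three parts in order, each by unwinding definitions.

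For part (1), I take the definition $\mathbb P^{n-1}(\mathbb{F}_q)=\{\langle v\rangle:0\neq v\in\mathbb{F}_q^n\}$ recalled just before the lemma. Each point is literally $\langle v\rangle$, a one-dimensional subspace, since $v\neq0$. Conversely, every one-dimensional subspace $W$ contains a non-zero vector $v$. Then $W=\langle v\rangle$, so the correspondence is a bijection.

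For part (2), I show that for non-zero $v,w$ we have $\langle v\rangle=\langle w\rangle$ if and only if $w=\alpha v$ for some $0\neq\alpha\in\mathbb{F}_q$. If $w=\alpha v$, then $w\in\langle v\rangle$ and $v=\alpha^{-1}w\in\langle w\rangle$, so the two spans coincide. Conversely, if $w\in\langle v\rangle$, then $w=\alpha v$ for some scalar $\alpha$, and $\alpha\neq0$ because $w\neq0$.

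For part (3), I read ``$u\in\langle v\rangle$'' as a non-zero vector $u$, since eigenvectors are non-zero by definition. The argument has three steps.
\begin{enumerate}
\item By part (2), $u=\beta v$ with $\beta\neq0$, so $\langle u\rangle=\langle v\rangle$.
\item Remark \ref{rem2} gives that $u$ is an eigenvector of $M$ if and only if $v$ is, which holds if and only if $Mv\in\langle v\rangle$. This is the same as the linear map $x\mapsto Mx$ stabilizing $\langle v\rangle$.
\item Remark \ref{rem3} gives that every $\alpha M$ with $\alpha\neq0$ has the same eigenvectors as $M$. So $v$ is an eigenvector of $M$ if and only if each non-zero element of $\langle M\rangle$ stabilizes $\langle v\rangle$.
\end{enumerate}

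The zero matrix is the only element of $\langle M\rangle$ not covered by step 3. It trivially maps $\langle v\rangle$ into itself, since $0\cdot v=0\in\langle v\rangle$. So I read ``stabilize'' as ``maps $\langle v\rangle$ into $\langle v\rangle$'', which is also the sense used in Remark \ref{rem2}. With that reading, every element of $\langle M\rangle$ stabilizes $\langle v\rangle$, and the equivalence holds for all of $\langle M\rangle$.

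The only point needing care is this last one: the zero matrix in $\langle M\rangle$ and the meaning of ``stabilize'' for a possibly non-invertible map. Fixing the convention above resolves it. Everything else is immediate from the cited remarks.
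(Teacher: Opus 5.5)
Your proposal is correct and takes the same route as the paper. The paper gives no separate proof; it derives the lemma directly from the definition of $\mathbb P^{n-1}(\mathbb{F}_q)$ and Remarks \ref{rem2} and \ref{rem3}, which is exactly what you unwind, and your explicit treatment of the zero matrix in $\langle M\rangle$ and of ``stabilize'' as ``maps $\langle v\rangle$ into itself'' (needed for eigenvalue $0$) makes precise what the paper leaves implicit.
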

The observations in Remarks \ref{rem2}, \ref{rem3} and Lemma \ref{projectivepoints}
 together with the role of eigenvalue relations in projective
 geometry and the invariance of algebraic structures under automorphisms,
 motivated us to define of a new graph as following.
\begin{definition}\label{a1} Let $\mathbb{F}_q$ be a finite field with $q$ elements and let
$\mathbb{V}_0= \mathbb{F}_q^n$ be the $n$-dimensional column vector space over
$\mathbb{F}_q$, for an integer $n\geq 2$. Let \[E_n(\mathbb{F}_q)=\{ M\in M_n(\mathbb{F}_q):
 M\text{ is a non-zero matrix with an eigenvector in $\mathbb{V}_0$} \}\]
 and
  $\mathcal{E}^{n-1}(\mathbb{F}_q)=\{\langle M\rangle: M \in$$E_n(\mathbb{F}_q)\}$
  the set of all one-dimensional subspaces of $M_n(\mathbb{F}_q)$ generated by the
  elements of $E_n(\mathbb{F}_q)$. The \emph{Spectral Incidence Graph} of
 $\mathbb{V}_0$,
 denoted by $ \mathbf{SIG}(\mathbb{V}_0)$, is a bipartite graph whose vertex set $V$
 is partitioned into two parts
 $\mathcal{E}^{n-1}(\mathbb{F}_q)$ and $\mathbb{P}^{n-1}(\mathbb{F}_q) $,
where two vertices $\langle M\rangle \in \mathcal{E}^{n-1}(\mathbb{F}_q)$
and $\langle v \rangle\in \mathbb{P}^{n-1}(\mathbb{F}_q)$ are
adjacent if and only if $v$ is an eigenvector of $M$ (i.e. there exists a
 $\lambda \in \mathbb{F}_q$  such that $Mv=\lambda v$). So
\[\langle M\rangle \sim \langle v\rangle \text{ is an edge of } \mathbf{SIG}(\mathbb{V}_0)
\quad\Longleftrightarrow\quad
v \text{ is an eigenvector of } M.\]
\end{definition}
For an arbitrary element $\lambda\in \mathbb{F}_q $, the \emph{elementary Jordan matrix} of order $k$ with eigenvalue $\lambda$ is the following $k\times k$ matrix : \[
J_k(\lambda)=
\begin{pmatrix}
\lambda & 1 & 0 &\cdots & 0\\
0 & \lambda &1& \cdots & 0\\
\vdots & \vdots & \ddots & \vdots&\vdots\\
0 & \vdots & \cdots& \lambda & 1\\
0 & \cdots&  \cdots &0& \lambda
\end{pmatrix}
\] (see \cite[Corollary 4.4]{Hungerford}), when $k=1$, it is called a \emph{trivial}
 elementary Jordan matrix, whereas when
  $k>1$
 it is called a \emph{nontrivial elementary Jordan matrix}.
In some references such as \cite{Hoffman} the entries $1$ are under the main diagonal. This
difference does not effect our results since two types are similar matrices.

For a matrix $M\in M_n(\mathbb{F}_q)$, let
  $\emph{Spec}(M)= \{\lambda \in \mathbb{F}_q: det(M-\lambda I_n)=0 \}$ and
   $d_{\lambda}$ the dimension of
    $E_{\lambda}=\{v\in \mathbb{F}_q^n: (M-\lambda I_n)v=0\}$ (i.e. $d_{\lambda}=dim(ker(M-\lambda I_n)).)$
Clearly we have $|\emph{Spec}(M)|\leq q.$
\begin{theorem}\label{regular} For the graph $ \mathbf{SIG}(\mathbb{V}_0)$ the following statements hold:
\begin{enumerate}
  \item $deg(\langle M\rangle)= \sum_{\lambda \in \emph{\emph{Spec}}(M)} \frac{q^{d_{\lambda}}-1}{q-1}$ and
    $1\leq deg(\langle M\rangle)\leq \frac{q^n-1}{q-1}$, for every matrix $M\in E_n(\mathbb{F}_q)$.
  \item For every  $\langle v\rangle\in \mathbb P^{n-1}(\mathbb{F}_q)$,
  $deg(\langle v\rangle)=\frac{q^{n^2-n+1}-1}{q-1}$.
  \item $\triangle (\mathbf{SIG}(\mathbb{V}_0))= \frac{q^{n^2-n+1}-1}{q-1} $
  and $\delta (\mathbf{SIG}(\mathbb{V}_0))=1$.
\end{enumerate}
\end{theorem}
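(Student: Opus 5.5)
For part (1), I would count the neighbours of $\langle M\rangle$ directly from Definition~\ref{a1}. A point $\langle v\rangle$ is adjacent to $\langle M\rangle$ exactly when $Mv=\lambda v$ for some $\lambda$. By Remark~\ref{rem2} this condition depends only on the point, and by Remark~\ref{rem3} it depends only on the class $\langle M\rangle$. Moreover, $\lambda$ is uniquely determined by $v$, since $Mv=\lambda v=\mu v$ with $v\neq 0$ forces $\lambda=\mu$.

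Hence the neighbourhood $N(\langle M\rangle)$ is the disjoint union, over $\lambda\in\mathrm{Spec}(M)$, of the sets of projective points of the eigenspaces $E_\lambda$. A $d_\lambda$-dimensional subspace contains $q^{d_\lambda}-1$ non-zero vectors, and each point contains $q-1$ of them. So it contributes $\frac{q^{d_\lambda}-1}{q-1}$ points, which gives the formula.

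For the bounds in (1):
\begin{itemize}
\item The lower bound $\deg\ge 1$ holds because $M\in E_n(\mathbb F_q)$ has an eigenvector, so $\mathrm{Spec}(M)\neq\emptyset$ and some $d_\lambda\ge1$.
\item The upper bound holds because the neighbours form a subset of $\mathbb P^{n-1}(\mathbb F_q)$, which has $\frac{q^n-1}{q-1}$ points. The bound is attained, for example, by $M=I_n$.
\end{itemize}

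For part (2), by Theorem~\ref{count}(1) exactly $q^{n^2-n+1}$ matrices have $v$ as an eigenvector. This count includes the zero matrix, which is excluded from $E_n(\mathbb F_q)$. That leaves $q^{n^2-n+1}-1$ non-zero matrices. By Remark~\ref{rem3} this set is closed under multiplication by non-zero scalars, and each class $\langle M\rangle$ contains exactly $q-1$ non-zero matrices. Dividing gives $\frac{q^{n^2-n+1}-1}{q-1}$ distinct matrix vertices adjacent to $\langle v\rangle$.

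For part (3), I would combine (1) and (2).
\begin{itemize}
\item \emph{Maximum degree.} Every matrix vertex has degree at most $\frac{q^n-1}{q-1}$. Since $n\ge2$ gives $n^2-n+1\ge n$, this is at most the common degree $\frac{q^{n^2-n+1}-1}{q-1}$ of the point vertices. Therefore $\Delta$ equals the degree of the point vertices.
\item \emph{Minimum degree.} A point vertex has degree at least $1$, so it suffices to exhibit a matrix vertex of degree exactly $1$. Take $M$ with a single eigenvalue whose eigenspace has dimension $1$, such as the nilpotent Jordan block $J_n(0)$. Then $\mathrm{Spec}(M)=\{0\}$ and $d_0=1$, so $\deg(\langle M\rangle)=1$. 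Hence $\delta=1$.
\end{itemize}

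The only delicate points are two pieces of bookkeeping: excluding the zero matrix in (2), and justifying that the eigenspace contributions in (1) are disjoint. Both are handled above.
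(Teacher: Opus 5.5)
Your proposal is correct and follows essentially the same route as the paper. The three ingredients match: (1) the disjoint eigenspace decomposition of $N(\langle M\rangle)$; (2) the count $q^{n^2-n+1}$ from Theorem~\ref{count}(1), with the zero matrix removed and the result divided by $q-1$; and (3) comparing the matrix-vertex bound $\frac{q^n-1}{q-1}$ with the point-vertex degree, together with a single Jordan block $J_n(\lambda)$ to attain $\delta=1$.
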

\begin{proof}
\indent(1) For every matrix $M \in E_n(\mathbb{F}_q)$, let $E_{\lambda}= ker( M-\lambda I_n)$
 and $d_{\lambda}=dim E_{\lambda}$. Then since the vertices are one-dimensional,
   each $E_{\lambda}$  contributes exactly
  $\frac{q^{d_{\lambda}}-1}{q-1}$ vertices contained to the projective  part
   $\mathbb{P}^{n-1}(\mathbb{F}_q)$.  Since eigenspaces corresponding to distinct eigenvalues
   intersect only in the zero vector, the contributions are disjoint. Therefore,
 $deg(\langle M\rangle)= \sum_{\lambda \in \emph{Spec}(M)} \frac{q^{d_{\lambda}}-1}{q-1}$.
  Every matrix $A\in E_n(\mathbb{F}_q)$ has at least one non-zero
   eigenvector over $\mathbb{F}_q$. The identity matrix $I_n$ has
    every non-zero vector as an eigenvector with eigenvalue 1.
     So every vertex $\langle v\rangle$ in part $\mathbb {P}^{n-1}(\mathbb{F}_q)$
      is adjacent to $I_n$. Hence $deg(\langle I_n\rangle)=\frac{q^n-1}{q-1}$ is
      the maximum degree in part $\mathcal{E}^{n-1}(\mathbb{F}_q)$. So  for every
       $M\in E_n(\mathbb{F}_q)$, we have $1\leq deg(\langle M\rangle)\leq |\mathbb{P}^{n-1}(\mathbb{F}_q)|\leq \frac{q^n-1}{q-1}$.\\
\indent (2) By Theorem \ref{count} part (1) the number
 of matrices $0\neq A\in E_n(\mathbb{F}_q)$ such that
$v$ is an eigenvector of $A$ is $q^{n^2-n+1}-1$.
Thus the cardinality of the set
 $\{\langle A\rangle \in \mathcal{E}^{n-1}(\mathbb{F}_q):
  v \text{ is an eigenvector of } A  \text{ and } A\neq 0 \}$ is
$\frac{q^{n^2-n+1}-1}{q-1}$ and hence $deg(\langle v\rangle)=\frac{q^{n^2-n+1}-1}{q-1}$.\\
(3)  Finally   $\delta(\mathbf{SIG}(\mathbb{V}_0))=1$ as for every $\lambda \in \mathbb{F}_q$
the elementary Jordan matrix $J_n(\lambda)$
has eigenspace dimension 1. Therefore, $deg(\langle J_n(\lambda\rangle))=1=\delta (\mathbf{SIG}(\mathbb{V}_0)$.
 Since $n\geq 2$, we have
$\frac{q^n-1}{q-1} <\frac{q^{n^2-n+1}-1}{q-1}$ and using parts (1)
 and (2) we obtain $\triangle (\mathbf{SIG}(\mathbb{V}_0))=
  \frac{q^{n^2-n+1}-1}{q-1}.$
\end{proof}
\begin{lemma}\label{degree1}
For every non-zero vector $v\in \mathbb{F}_q^n$, there exists at least
  one matrix $A\in$$E_n(\mathbb{F}_q)$ such that the only eigenspace of
   $A$ is  $\langle v\rangle$. Equivalently,
for every $L\in$$\mathbb{P}^{n-1}(\mathbb{F}_q)$, there exists a matrix
 $A\in E_n(\mathbb{F}_q)$ whose only eigenspace is $L$.
\end{lemma}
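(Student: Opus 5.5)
The plan is to take the elementary Jordan matrix $J_n(\lambda)$, already used in the proof of Theorem~\ref{regular}(3), and move its unique eigenline onto the prescribed line $\langle v\rangle$ by a change of basis, using Lemma~\ref{changeB}. Concretely, fix $\lambda\in\mathbb{F}_q$ (say $\lambda=1$, so that the matrix is non-zero; for $n\ge 2$, $J_n(0)\neq 0$ also works). First I verify that $J_n(\lambda)$ has exactly one eigenspace, namely $\langle e_1\rangle$.

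\emph{Step 1: eigenvalues.} The characteristic polynomial of the upper triangular matrix $J_n(\lambda)$ is $(x-\lambda)^n$, so $\emph{Spec}(J_n(\lambda))=\{\lambda\}$.

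\emph{Step 2: the eigenspace.} The matrix $J_n(\lambda)-\lambda I_n$ is the nilpotent shift sending $e_{i}\mapsto e_{i-1}$ for $i\ge 2$ and $e_1\mapsto 0$. It has rank $n-1$, so $\ker(J_n(\lambda)-\lambda I_n)=\langle e_1\rangle$. Hence every eigenvector of $J_n(\lambda)$ lies in $\langle e_1\rangle$.

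\emph{Step 3: transport to $v$.} Extend $v$ to a basis $\mathcal{B}=\{v=v_1,v_2,\dots,v_n\}$ of $\mathbb{F}_q^n$, and let $P\in GL_n(\mathbb{F}_q)$ be the matrix with columns $v_1,\dots,v_n$, so that $Pe_1=v$. Define $A=PJ_n(\lambda)P^{-1}$. This matrix is non-zero and is similar to $J_n(\lambda)$, so by Remark~\ref{similarmatrices} we have $\emph{Spec}(A)=\{\lambda\}$. Moreover,
\[
\ker(A-\lambda I_n)=P\ker(J_n(\lambda)-\lambda I_n)=\langle Pe_1\rangle=\langle v\rangle,
\]
which is the eigenvector correspondence of Lemma~\ref{changeB} applied in both directions. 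Thus $A\in E_n(\mathbb{F}_q)$, and its only eigenspace is $\langle v\rangle$. In graph terms, $\deg(\langle A\rangle)=1$, with unique neighbour $\langle v\rangle$, by Theorem~\ref{regular}(1).

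The equivalent formulation follows at once: any $L\in\mathbb{P}^{n-1}(\mathbb{F}_q)$ has the form $L=\langle v\rangle$ for some $v\neq 0$, and by Lemma~\ref{projectivepoints} the construction depends only on the line, not on the chosen representative. There is no serious obstacle here. The only point needing care is Step 2, where one must confirm that there are no eigenvectors outside $\ker(J_n(\lambda)-\lambda I_n)$. This holds because $\lambda$ is the sole eigenvalue in $\mathbb{F}_q$, so every eigenvector lies in that single kernel.
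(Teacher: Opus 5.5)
Your proposal is correct and follows the same route as the paper: you conjugate the elementary Jordan matrix $J_n(\lambda)$ by the change-of-basis matrix $P$ whose first column is $v$, so that $A=PJ_n(\lambda)P^{-1}$ has $\langle v\rangle$ as its unique eigenspace. Your explicit checks that $\ker(J_n(\lambda)-\lambda I_n)=\langle e_1\rangle$ and that $A\neq 0$ supply details the paper leaves implicit.
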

\begin{proof}
  Suppose $L=\langle v\rangle$ is a one-dimensional subspace of $\mathbb{F}_q^n$. Since $v\neq0$
we can choose a basis $\mathcal{B}=\{v=v_1,v_2,\dots,v_n\}$ for $\mathbb{F}_q^n$.
 We wish to construct a matrix $M$ whose only eigenspace is $L$.
 With respect to this basis, the elementary Jordan matrix $J_n(\lambda)$
  has the unique one-dimensional eigenspace $ L=\langle [v]_\mathcal{B}\rangle$.
 Let $P$ be the change-of-basis matrix whose columns are
$v_1,\ldots,v_n$. Define $A=PJ_n(\lambda)P^{-1}$. So using
Lemma \ref{changeB} and Remark \ref{similarmatrices}, the unique eigenspace of $A$ is also $\langle v\rangle$ in the standard basis and this  completes the proof.
\end{proof}
\begin{theorem}\label{diam}
  The graph $\mathbf{SIG}(\mathbb{V}_0)$ is a connected graph with
  $\frac{q^n-1}{(q-1)^2}(q^{n^2-n+1}-1)$ edges and
$diam(\mathbf{SIG}(\mathbb V_0))=4.$
\end{theorem}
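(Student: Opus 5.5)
The statement has three parts: connectivity, the edge count, and the diameter. I would treat each in turn, using only the degree computations of Theorem~\ref{regular} and the existence results already established.

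\emph{Edge count.} Since the graph is bipartite, every edge has exactly one end in $\mathbb{P}^{n-1}(\mathbb{F}_q)$. The number of edges is therefore $\sum_{\langle v\rangle}\deg(\langle v\rangle)$. By Theorem~\ref{regular}(2), each point has degree $\frac{q^{n^2-n+1}-1}{q-1}$. There are $\frac{q^n-1}{q-1}$ points, so the product gives $\frac{q^n-1}{(q-1)^2}(q^{n^2-n+1}-1)$, which is the claimed number.

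\emph{Connectivity and the upper bound $diam\le 4$.} The key vertex is $\langle I_n\rangle$, which is adjacent to every point. Hence any two points $\langle v\rangle,\langle w\rangle$ are at distance at most $2$, via $\langle v\rangle\sim\langle I_n\rangle\sim\langle w\rangle$. Every matrix vertex $\langle M\rangle$ has at least one neighbour point, by the definition of $E_n(\mathbb{F}_q)$. So a matrix vertex and a point are at distance at most $3$, and two matrix vertices are at distance at most $4$. In particular the graph is connected.

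\emph{Lower bound $diam\ge 4$.} Here I need two matrix vertices with no common neighbour point. Bipartiteness forces the distance between two matrix vertices to be even, so this gives distance exactly $4$. I would use Lemma~\ref{degree1}: take linearly independent $v,w$, which exist since $n\ge 2$, and let $A,B$ be matrices whose only eigenspaces are $\langle v\rangle$ and $\langle w\rangle$ respectively. Then $N(\langle A\rangle)=\{\langle v\rangle\}$ and $N(\langle B\rangle)=\{\langle w\rangle\}$. These neighbourhoods are disjoint, and $\langle A\rangle\ne\langle B\rangle$, so $d(\langle A\rangle,\langle B\rangle)\ge 4$.

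The only point needing care is that $\langle A\rangle\neq\langle B\rangle$ as vertices. This holds because scalar multiples have the same eigenvectors (Remark~\ref{rem3}), while $A$ and $B$ have different eigenvectors. I would also check that $J_n(\lambda)$ is non-zero even when $\lambda=0$, which holds since $n\ge2$ makes the superdiagonal nonzero, so $A$ and $B$ lie in $E_n(\mathbb{F}_q)$. I do not expect any step to be a real obstacle; the argument is essentially bookkeeping.
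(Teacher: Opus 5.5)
Your proposal is correct and follows essentially the same proof as the paper. It counts edges by summing the constant point degrees from Theorem~\ref{regular}(2), gets connectivity and $diam\le 4$ from the universal vertex $\langle I_n\rangle$, and gets $diam\ge 4$ from two degree-one matrix vertices attached to distinct points via Lemma~\ref{degree1}; your explicit argument from disjoint neighbourhoods and bipartite parity makes precise the lower bound that the paper only asserts as ``a path with the minimum length.''
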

\begin{proof}
  For every $0\neq v\in \mathbb{F}_q^n $  we have  $ I_n v = v$. So $\langle I_n \rangle$
   is adjacent to every vertex $\langle v\rangle$. Hence every two distinct vertices
   $\langle v_1\rangle$ and $\langle v_2\rangle$ of part
    $\mathbb{P}^{n-1}(\mathbb{F}_q)$ are connected by a path of length 2.
    Let $\langle M \rangle, \langle N \rangle \in \mathcal{E}^{n-1}(\mathbb{F}_q)$ such
     that $\langle M\rangle\neq \langle N\rangle$.
     If $M$ and $N$ have a common eigenvector $v\in \mathbb{V}$, then
      $\langle M\rangle\sim \langle v \rangle\sim \langle N\rangle$ is a path of length 2.
       If $M$ and $N$  have no common eigenvector
   then we have the path
   $\langle M\rangle\sim \langle v_M \rangle\sim \langle I_n\rangle\sim
   \langle v_N\rangle \sim \langle N\rangle$ is a path of length 4, where $v_M$ and $v_N$ are eigenvectors
   of $M $ and $N$, respectively (note that $\langle v_M\rangle\neq\langle v_n\rangle$). If for
     $\langle M\rangle\in \mathcal{E}^{n-1}(\mathbb{F}_q) $ and
    $\langle v\rangle\in \mathbb{P}^{n-1}(\mathbb{F}_q)$, $\langle M\rangle$
     and $\langle v\rangle$ are not adjacent, then we have the path
      $\langle M\rangle\sim \langle v_M \rangle\sim \langle I_n\rangle\sim \langle v\rangle $,
       where $v_M$ is an eigenvector of $M$. So the graph is connected and $diam(\mathbf{SIG}(\mathbb V_0))\leq 4.$
           Now, we want to determine the diameter of $\mathbf{SIG}(\mathbb{V}_0)$ and then the number of its edges.
 Note that  $q\geq2$ and $n\geq2$, hence $|\mathbb{P}^{n-1}(\mathbb{F}_q)|=\frac{q^n-1}{q-1}\geq 3$. By Lemma \ref{degree1}
for every vertex $\langle v\rangle\in \mathbb{P}^{n-1}(\mathbb{F}_q)$  there exists a matrix vertex (i.e. one-dimensional subspace of $M_n(\mathbb{F}_q)$)
$\langle M_v\rangle\in$$\mathcal{E}^{n-1}(\mathbb{F}_q)$ such that  $\langle M_v\rangle$ only is adjacent to $\langle v\rangle$(i.e. $N(\langle M_v\rangle)= \{\langle v\rangle\}$). So choose two distinct vertices
  $\langle v\rangle\neq\langle u\rangle$ in $\mathbb{P}^{n-1}(\mathbb{F}_q)$.   Then there
  exist two matrix vertices $\langle M_v\rangle$ and $\langle M_u\rangle$ in $\mathcal{E}^{n-1}(\mathbb{F}_q)$  such that $N(\langle M_u\rangle)= \{\langle u\rangle\}$
   and $N(\langle M_v\rangle)= \{\langle v\rangle\}$. Clearly $\langle M_u\rangle\neq\langle M_v\rangle$.
   Then the path $\langle M_u\rangle\sim \langle u\rangle\sim \langle I_n\rangle\sim\langle v\rangle
   \sim\langle M_v\rangle$ is a path with the minimum length between two vertices $\langle M_u\rangle$ and $\langle M_v\rangle.$  Therefore $diam(\mathbf{SIG}(\mathbb V_0))=4$. Now, since  $|\mathbb{P}^{n-1}(\mathbb{F}_q)|=\frac{q^n-1}{q-1}$ and by Theorem \ref{regular} every
     vertex $\langle v\rangle\in \mathbb{P}^{n-1}(\mathbb{F}_q)$  has the degree $\frac{q^{n^2-n+1}-1}{q-1}$, the graph $\mathbf{SIG}(\mathbb{V}_0)$ has  $\frac{q^n-1}{(q-1)^2}(q^{n^2-n+1}-1)$ edges and this  completes the proof.
\end{proof}

\begin{proposition}\label{scalarmatrix} For a matrix $M\in E_n(\mathbb{F}_q)$, $\langle M\rangle$ is
 adjacent to every vertex $\langle v \rangle \in\mathbb{P}^{n-1}(\mathbb{F}_q)$  if and
only if $M$ is a scalar matrix (i.e. $M= \lambda I_n$  for some $\lambda \in \mathbb{F}_q$).
 In particular, in part $\mathcal{E}^{n-1}(\mathbb{F}_q)$ only $\langle I_n\rangle$  has the maximum degree $\frac{q^n-1}{q-1}.$
\end{proposition}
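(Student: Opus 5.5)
The plan is to prove the two directions of the equivalence separately and then read off the degree statement.

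\emph{Scalar matrices are adjacent to every point.} If $M=\lambda I_n$ with $M\neq 0$, then $Mv=\lambda v$ for every nonzero $v$. So $\langle M\rangle$ is adjacent to every $\langle v\rangle\in\mathbb{P}^{n-1}(\mathbb{F}_q)$. Moreover, every nonzero scalar matrix generates the same one-dimensional subspace $\langle I_n\rangle$ of $M_n(\mathbb{F}_q)$. (The zero matrix is excluded from $E_n(\mathbb{F}_q)$ by definition.)

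\emph{A matrix with every nonzero vector as eigenvector is scalar.} Suppose $\langle M\rangle$ is adjacent to every point. Then every nonzero $v$ is an eigenvector of $M$. Apply this to the standard basis: $Me_i=\lambda_i e_i$ for each $i$. For $i\neq j$, the vector $e_i+e_j$ is also an eigenvector, and $e_i,e_j,e_i+e_j$ are pairwise linearly independent but linearly dependent. This is exactly the situation in the proof of Lemma~\ref{collinear}, whose argument gives $\lambda_i=\lambda_j$. Directly,
\[
M(e_i+e_j)=\lambda_i e_i+\lambda_j e_j=\mu(e_i+e_j),
\]
and comparing coefficients gives $\lambda_i=\mu=\lambda_j$. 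Hence all $\lambda_i$ are equal to some $\lambda$. Since $M$ acts as $\lambda$ on a basis, $M=\lambda I_n$, and $\lambda\neq0$ because $M\neq0$.

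\emph{The degree statement.} By Theorem~\ref{regular}(1), $\deg(\langle M\rangle)\le |\mathbb{P}^{n-1}(\mathbb{F}_q)|=\frac{q^n-1}{q-1}$. Equality holds exactly when $\langle M\rangle$ is adjacent to all points. By the equivalence just proved, this happens exactly when $M$ is a nonzero scalar matrix, i.e.\ when $\langle M\rangle=\langle I_n\rangle$. So $\langle I_n\rangle$ is the unique vertex of maximum degree in $\mathcal{E}^{n-1}(\mathbb{F}_q)$.

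There is no real obstacle here. The only point needing care is to note explicitly that all nonzero scalar matrices collapse to the single vertex $\langle I_n\rangle$, since that is what gives uniqueness.
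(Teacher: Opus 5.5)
Your proof is correct and follows essentially the same route as the paper: eigenvectors $e_1,\dots,e_n$ force $M$ to be diagonal, one further eigenvector condition forces the diagonal entries to coincide, and uniqueness of the maximum-degree vertex comes from all nonzero scalar matrices giving the single vertex $\langle I_n\rangle$. The only cosmetic difference is that the paper uses the single vector $(1,1,\dots,1)^T$, whereas you use the pairwise sums $e_i+e_j$.
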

\begin{proof}
Assume that every non-zero vector is an eigenvector of $M$ and $\{e_1,e_2, \ldots ,e_n\}$ is the standard basis for $\mathbb{F}_q^n$. Then for each $e_i$ there exists a $\lambda_i\in \mathbb{F}_q$ such that $Me_i=\lambda_i e_i$.  So $M=\begin{pmatrix}
\lambda_1 & 0 & \cdots &0 \\
0 &  \lambda_2& \cdots&0\\
\vdots & \vdots & \ddots&\vdots&\\
0 & 0 & \cdots &\lambda_n
\end{pmatrix}\text{is a diagonal matrix}$. Also for some $\lambda \in \mathbb{F}_q$ we have
$M \begin{pmatrix}
     1 \\
     1 \\
     \vdots \\
     1
   \end{pmatrix} =\lambda \begin{pmatrix}
     1 \\
     1 \\
     \vdots \\
     1
   \end{pmatrix}$. So $\begin{pmatrix}
     \lambda_1 \\
     \lambda_2 \\
     \vdots \\
     \lambda_n
   \end{pmatrix} = \begin{pmatrix}
     \lambda \\
     \lambda \\
     \vdots \\
     \lambda
   \end{pmatrix} $. Therefore, $\lambda_1=\lambda_2=\ldots=\lambda_n= \lambda$ and hence $M=\lambda I_n$
   is a scalar matrix. Note that every scalar matrix $\lambda I_n$ has all non-zero vectors  as its eigenvectors. Clearly we have $q-1$ such matrices.
 So only the vertex of largest degree in part $\mathcal{E}^{n-1}(\mathbb{F}_q)$ is
  precisely $\langle\lambda I_n\rangle =\langle I_n\rangle$ with $deg(\langle I_n\rangle)= |\mathbb{P}^{n-1}(\mathbb{F}_q)|=\frac{q^n-1}{q-1}$.
\end{proof}
 Consider a bipartite graph $\mathcal{G}$ with the vertex set $V= A\cup B$,  where $A$ and $B$
are the related partitions of $V$. Recall that a subset $D\subseteq B$ dominates $A$ if
every vertex of $A$ is adjacent to at least one vertex of $D$.
 The{ domination number of $A$ in the graph $\mathcal{G}$} denoted by $\gamma_A(\mathcal{G})$ which
 is defined as: \[\gamma_A(\mathcal{G})=min\{|D|:D\subseteq B \text{ such that \emph{D} dominates \emph{A}}\}.\]
For every bipartite graph $\mathcal{G}$ with bipartition vertex set $V= A\cup B$,
in general $\gamma(\mathcal{G})\leq \gamma_A(\mathcal{G})+\gamma_B(\mathcal{G})$. In the following result we
determined $\gamma(\mathbf{SIG}(\mathbb{V}_0))$ and we see that in the mentioned inequality,
the proper case holds for the bipartite graph $\mathbf{SIG}(\mathbb{V}_0)$.

\begin{theorem}\label{a2} For the graph $\mathbf{SIG}(\mathbb{V}_0)$ the following statements hold:
\begin{enumerate}
\item $\gamma_{\mathcal{E}^{n-1}(\mathbb{F}_q)}(\mathbf{SIG}(\mathbb{V}_0))=
|\mathbb{P}^{n-1}(\mathbb{F}_q)|= \frac{q^n-1}{q-1}$.
\item$\gamma_{\mathbb{P}^{n-1}(\mathbb{F}_q)}(\mathbf{SIG}(\mathbb{V}_0))=1$.
\item$\gamma(\mathbf{SIG}(\mathbb{V}_0))=\frac{q^n-1}{q-1}$.
\end{enumerate}
  \end{theorem}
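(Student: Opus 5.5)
For part (1), the lower bound will come from the pendant matrix vertices of Lemma \ref{degree1}, and the upper bound from taking all of the projective part. By Lemma \ref{degree1}, for each point $\langle v\rangle\in\mathbb{P}^{n-1}(\mathbb{F}_q)$ there is a matrix vertex $\langle M_v\rangle$ with $N(\langle M_v\rangle)=\{\langle v\rangle\}$. For instance, one can take $M_v=PJ_n(\lambda)P^{-1}$ with $\lambda\neq 0$ so that $M_v\neq 0$; note that $J_n(0)\neq 0$ anyway since $n\geq 2$. Any set $D\subseteq\mathbb{P}^{n-1}(\mathbb{F}_q)$ that dominates $\mathcal{E}^{n-1}(\mathbb{F}_q)$ must contain a neighbour of $\langle M_v\rangle$, which can only be $\langle v\rangle$. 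Hence $D$ contains every point, and $|D|\geq\frac{q^n-1}{q-1}$. For the reverse inequality, every $M\in E_n(\mathbb{F}_q)$ has an eigenvector by definition, so the whole set $\mathbb{P}^{n-1}(\mathbb{F}_q)$ dominates the matrix part. This gives equality.

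For part (2), the singleton $\{\langle I_n\rangle\}$ dominates $\mathbb{P}^{n-1}(\mathbb{F}_q)$, since $I_nv=v$ for all $v$, as already observed in Theorem \ref{diam}. A dominating set cannot be empty because the projective part is nonempty, so $\gamma_{\mathbb{P}^{n-1}(\mathbb{F}_q)}=1$.

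For part (3), I take $D=\mathbb{P}^{n-1}(\mathbb{F}_q)$ itself. Every vertex outside $D$ is a matrix vertex, and each matrix vertex has an eigenvector, so $D$ is a dominating set of the whole graph. This gives $\gamma\leq\frac{q^n-1}{q-1}$.

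The lower bound in part (3) is the main step, because a general dominating set $D$ may now mix both parts. For each point $\langle v\rangle$, consider the pendant vertex $\langle M_v\rangle$. Either $\langle M_v\rangle\in D$, or it is dominated, in which case its unique neighbour $\langle v\rangle$ lies in $D$. So the set $\{\langle v\rangle,\langle M_v\rangle\}$ meets $D$ for every $v$. These pairs are pairwise disjoint as $v$ ranges over the points: the $\langle v\rangle$ are distinct, and $\langle M_v\rangle\neq\langle M_u\rangle$ for $u\neq v$ because their neighbourhoods differ. Therefore $|D|\geq|\mathbb{P}^{n-1}(\mathbb{F}_q)|=\frac{q^n-1}{q-1}$, giving equality.

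Finally, I will note that since $\frac{q^n-1}{q-1}<\frac{q^n-1}{q-1}+1$, the inequality $\gamma\leq\gamma_A+\gamma_B$ is strict here, as claimed in the paper.
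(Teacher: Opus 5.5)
Your proposal is correct and follows the paper's proof. The upper bounds come from taking the whole projective part and $\{\langle I_n\rangle\}$, and the lower bounds in (1) and (3) come from the pendant matrix vertices of Lemma \ref{degree1}, with each pair $\{\langle v\rangle,\langle M_v\rangle\}$ forced to meet any dominating set. You also spell out two details the paper leaves implicit: that $M_v\neq 0$, and that these pairs are pairwise disjoint.
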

\begin{proof}
\indent(1) Clearly the set $\mathbb{P}^{n-1}(\mathbb{F}_q)$ dominates
the set $\mathcal{E}^{n-1}(\mathbb{F}_q)$. Note that  by Lemma \ref{degree1}
for every  vertex $\langle v\rangle \in \mathbb{P}^{n-1}(\mathbb{F}_q) $
there exists a matrix $M\in E_n(\mathbb{F}_q)$, such that the only
 eigenspace of $M$ is $\langle v\rangle$. So the only neighbor of
  $\langle M\rangle$ in the graph $\mathbf{SIG}(\mathbb{V}_0)$ is $\langle v\rangle$.
  Therefore, the set $\mathbb{P}^{n-1}(\mathbb{F}_q)\setminus\langle v\rangle$ doesn't dominate
   the set $\mathcal{E}^{n-1}(\mathbb{F}_q)$. So $\mathbb{P}^{n-1}(\mathbb{F}_q)$
    is a set with  minimum cardinality which dominates $\mathcal{E}^{n-1}(\mathbb{F}_q)$ and hence
$\gamma_{\mathcal{E}^{n-1}(\mathbb{F}_q)}(\mathbf{SIG}(\mathbb{V}_0))
=|\mathbb{P}^{n-1}(\mathbb{F}_q)|= \frac{q^n-1}{q-1}$.\\
\indent (2) Since every non-zero vector $v\in \mathbb{F}_q^n$ is an
 eigenvector of matrix $I_n$, so $\{\langle I_n\rangle\}$ is a dominating
 set with the minimum cardinality for $\mathbb{P}^{n-1}(\mathbb{F}_q)$. Therefore,
$\gamma_{\mathbb{P}^{n-1}(\mathbb{F}_q)}(\mathbf{SIG}(\mathbb{V}_0))= |\{\langle I_n\rangle\}|=1$.\\

\indent (3)
 It is clear that the part $\mathbb{P}^{n-1}(\mathbb{F}_q)$ is a dominating
  set for the graph $\mathbf{SIG}(\mathbb{V}_0)$. Hence
$\gamma(\mathbf{SIG}(\mathbb{V}_0))\le |\mathbb{P}^{n-1}(\mathbb{F}_q)|=\frac{q^n-1}{q-1}$.
 By Lemma \ref{degree1}, for each  $\langle v\rangle\in$$\mathbb{P}^{n-1}(\mathbb{F}_q) $,
  there exists at least one vertex
in $\mathcal{E}^{n-1}(\mathbb{F}_q) $
 such that whose unique eigenspace is $\langle v\rangle$.
Thus, for each vertex $\langle v\rangle $, we can choose a vertex
$\langle A_v\rangle $ whose unique
eigenspace is $\langle v\rangle$.
Then $N(\langle A_v\rangle)=$$\{\langle v\rangle\}$. Note that
every dominating set of graph $\mathbf{SIG}(\mathbb{V}_0)$ must contain at least one vertex from each pair
$\{\langle v\rangle,\langle A_v\rangle\}$ for every $\langle v\rangle\in \mathbb{P}^{n-1}(\mathbb{F}_q)$.
Therefore, every dominating set has the cardinality at least $|\mathbb{P}^{n-1}(\mathbb{F}_q)|$ and hence
$\gamma(\mathbf{SIG}(\mathbb{V}_0))\ge |\mathbb{P}^{n-1}(\mathbb{F}_q)|$.
 Combining  two inequalities yields $\gamma(\mathbf{SIG}(\mathbb{V}_0))=|\mathbb{P}^{n-1}(\mathbb{F}_q)|=\frac{q^n-1}{q-1}$.
\end{proof}
\section{\bf On the Twin Classes of $\mathbf{SIG}(\mathbb{V}_0$)}\label{section2}
 Let $\mathcal{G}$ be a simple graph  with the vertex set $V$.
  Two vertices $x$ and $y$ of $\mathcal{G}$ are
called \emph{twins} if $x$ and $y$ have the same neighbors
(i.e. $N(x) = N(y)$). Consider a binary relation $\mathcal{R} $ on the vertex set
$V$ as:\[ x\mathcal{R}y \iff N(x)=N(y),\text{ (i.e.
$x\mathcal{R}y$ if and only if $x$ and $y$ are twins)}.\]
 It is known that $\mathcal{R} $ is an equivalence relation on $V$. For a vertex $v\in V$, the set
$t(v)= \{x \in V: N(v)=N(x)\}$ is the equivalence class of twins (i.e. twin class) containing
  $v$. Clearly for each pair of vertices $u,v \in V$,  $t(v)=t(u)$  if and only if $N(v)=N(u)$.
  If $t(v)\neq \{v\}$ then $t(v)$ is said to be a \emph{nontrivial  twin class}.
  If $t(v)=\{v\}$ then $t(v)$ is said to be a \emph{trivial twin class}.
  Let $\tau: V\to V$ be a function on the vertex set $V$ of a graph $\mathcal{G}$, which
stabilizes every  twin class of $V$ and acts as a permutation of each
twin class.
It is easy to see that $\tau(x) \sim
\tau(y)$ is an edge  if and only if $x \sim y$ is an edge. Thus $\tau$ is a graph automorphism
of $\mathcal{G}$.
The aim of this section is investigating the twin classes of $\mathbf{SIG}(\mathbb{V}_0)$.
In Theorem \ref{twinpoints} we characterize the nontrivial twin classes of $\mathbf{SIG}(\mathbb{V}_0)$
 and we show that in the part $\mathbb{P}^{n-1}(\mathbb{F}_q)$ we have no nontrivial twin class.\par
For every  nonempty subset $U$ of $\mathbb{F}^n_q$
 we define the projectivization of $U$ by $\mathbb{P}(U)=\{ \langle u\rangle : 0\neq u \in U\}$.
For a matrix $A\in E_n(\mathbb{F}_q)$, define
$\mathcal E(A)=
\bigcup_{\lambda\in \emph{Spec}(A)}ker(A-\lambda I)$. Thus, $\mathcal E(A)$
consists of all eigenvectors of $A$ together with the zero vector. The following lemma is immediate.
 \begin{lemma} \label{twinmatvert}  Two matrix vertices
 $\langle A\rangle$ and $\langle B\rangle$ of  $\mathbf{SIG}(\mathbb{V}_0)$
 are twins if and only if $\mathcal E(A)=\mathcal E(B)$.
 \end{lemma}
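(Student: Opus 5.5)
The plan is to compute the neighbourhood of a matrix vertex explicitly and then compare two such neighbourhoods. By Definition~\ref{a1} together with Remark~\ref{rem3}, for $\langle A\rangle\in\mathcal{E}^{n-1}(\mathbb{F}_q)$ we have
\[
N(\langle A\rangle)=\{\langle v\rangle\in\mathbb{P}^{n-1}(\mathbb{F}_q): v \text{ is an eigenvector of } A\}.
\]
The adjacency does not depend on the chosen representative of $\langle A\rangle$ (Remark~\ref{rem3}) or of $\langle v\rangle$ (Remark~\ref{rem2}). By the definition of $\mathcal E(A)$, a nonzero vector $v$ is an eigenvector of $A$ exactly when $v\in\mathcal E(A)$. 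Hence
\[
N(\langle A\rangle)=\mathbb{P}(\mathcal E(A)).
\]

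Next I will check that $\mathcal E(A)$ is a union of lines through the origin together with $0$. Each $\ker(A-\lambda I)$ is a subspace, so $\mathcal E(A)$ is closed under multiplication by nonzero scalars. It follows that $\mathcal E(A)=\{0\}\cup\bigcup_{\langle v\rangle\in\mathbb{P}(\mathcal E(A))}\langle v\rangle$. In other words, $\mathcal E(A)$ is recovered from $\mathbb{P}(\mathcal E(A))$ by taking the union of the corresponding one-dimensional subspaces and adding $0$.

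Both directions of the lemma now follow. If $\mathcal E(A)=\mathcal E(B)$, then $N(\langle A\rangle)=\mathbb{P}(\mathcal E(A))=\mathbb{P}(\mathcal E(B))=N(\langle B\rangle)$, so the two vertices are twins. Conversely, if $N(\langle A\rangle)=N(\langle B\rangle)$, applying the reconstruction of the previous paragraph gives $\mathcal E(A)=\mathcal E(B)$.

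A matrix vertex can only be twin with another matrix vertex, because both neighbourhoods are nonempty subsets of the same part and the graph is bipartite. This does not matter here, since the lemma only compares matrix vertices anyway. I expect no real obstacle; the only point needing care is the well-definedness of adjacency on projective classes, and that is supplied by Remarks~\ref{rem2} and~\ref{rem3}.
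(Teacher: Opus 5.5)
Your proof is correct and matches the paper's intended argument. The paper gives no written proof and simply calls the lemma immediate; your identity $N(\langle A\rangle)=\mathbb{P}(\mathcal E(A))$, together with recovering $\mathcal E(A)$ as $\{0\}$ plus the union of the lines in $N(\langle A\rangle)$, is exactly the unpacking that justifies that claim.
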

 \begin{theorem}\label{thm:similar-twin-class}
Assume $A,B\in E_n(\mathbb{F}_q)$ are two similar matrices. Then
$|t(\langle A\rangle)|
=|t(\langle B\rangle)|$ and $|N(\langle A\rangle)|=|N(\langle B\rangle)|.$
\end{theorem}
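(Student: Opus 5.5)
The plan is to build a graph automorphism of $\mathbf{SIG}(\mathbb{V}_0)$ from the similarity. Such an automorphism carries $\langle A\rangle$ to $\langle B\rangle$, and both claimed equalities then follow from the general fact that automorphisms preserve neighbourhoods and twin classes. Suppose $B=PAP^{-1}$ with $P\in GL_n(\mathbb{F}_q)$. Define $\varphi$ on the vertex set by
\[
\varphi(\langle M\rangle)=\langle PMP^{-1}\rangle,\qquad \varphi(\langle v\rangle)=\langle Pv\rangle .
\]

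\textbf{Step 1: $\varphi$ is a well-defined bijection.} Conjugation by $P$ is a linear automorphism of $M_n(\mathbb{F}_q)$, so it maps one-dimensional subspaces to one-dimensional subspaces. It preserves $E_n(\mathbb{F}_q)$: if $Mv=\lambda v$ then
\[
(PMP^{-1})(Pv)=\lambda Pv,
\]
which is the computation of Lemma \ref{changeB}. It also preserves non-zero matrices. In the same way $v\mapsto Pv$ permutes the points of $\mathbb{P}^{n-1}(\mathbb{F}_q)$. Both maps have inverses given by $P^{-1}$, so $\varphi$ is a bijection that preserves each part of the bipartition.

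\textbf{Step 2: $\varphi$ preserves adjacency.} The displayed identity, together with its converse using $P^{-1}$, shows that $v$ is an eigenvector of $M$ if and only if $Pv$ is an eigenvector of $PMP^{-1}$. By Definition \ref{a1}, $\langle M\rangle\sim\langle v\rangle$ holds if and only if $\varphi(\langle M\rangle)\sim\varphi(\langle v\rangle)$. Hence $\varphi\in Aut(\mathbf{SIG}(\mathbb{V}_0))$ and $\varphi(\langle A\rangle)=\langle B\rangle$.

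\textbf{Step 3: conclude.} For any automorphism $\varphi$ we have $\varphi(N(x))=N(\varphi(x))$. Consequently $N(x)=N(y)$ holds if and only if $N(\varphi(x))=N(\varphi(y))$, which means $\varphi(t(x))=t(\varphi(x))$. Taking $x=\langle A\rangle$ and using that $\varphi$ is injective gives $|N(\langle A\rangle)|=|N(\langle B\rangle)|$ and $|t(\langle A\rangle)|=|t(\langle B\rangle)|$.

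For the twin part one could instead argue through Lemma \ref{twinmatvert}, using $\mathcal E(PCP^{-1})=P\,\mathcal E(C)$. The neighbourhood equality also follows from Theorem \ref{regular}(1), since similar matrices have the same eigenvalues (Remark \ref{similarmatrices}) and the same eigenspace dimensions. There is no serious obstacle here; the only point needing care is checking that conjugation is compatible with scalar multiples, which holds because it is linear.
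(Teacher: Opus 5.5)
Your proof is correct and uses the same idea as the paper's: transport everything by the conjugation $X\mapsto PXP^{-1}$ on matrix vertices and by $v\mapsto Pv$ on points. Because you assemble these maps into a single automorphism of $\mathbf{SIG}(\mathbb{V}_0)$ (the map $[P,\mathrm{id}]$ of Lemma~\ref{a5}), you also justify the step the paper only calls ``easy to show'': conjugation carries $t(\langle A\rangle)$ onto $t(\langle B\rangle)$ because it sends $N(\langle X\rangle)$ onto $N(\langle PXP^{-1}\rangle)$.
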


\begin{proof}
Since $A$ and $B$ are similar, there exists an invertible matrix
$P\in GL_n(\mathbb{F}_q)$ such that
$B=P^{-1}AP.$
Define
$\Phi:t(\langle A\rangle)\longrightarrow t(\langle B\rangle)
\quad\text{by}\quad
\Phi(\langle X\rangle)
=
\langle P^{-1}XP\rangle.$
 As
 in the proof of Lemma \ref{changeB},  conjugation by
 $P$ sends every eigendirection of $X$ onto the
corresponding eigendirection of $P^{-1}XP$. It is easy to show that
$\Phi$ is a bijection and hence
$|t(\langle A\rangle)|=|t(\langle B\rangle)|.$
For the second equality  the map \[\gamma :N(\langle A\rangle)\rightarrow N(\langle B\rangle)
\text{ by } \gamma(\langle v\rangle)= \langle P^{-1} v\rangle\] is a bijection and this completes the proof.
\end{proof}
\begin{lemma}\label{DistinctEigenvaluesLemma}
Let $M\in E_n(\mathbb{F}_q)$ and suppose
$
N(\langle M\rangle)
=
\{\langle v_1\rangle,\ldots,\langle v_k\rangle\},
$
where $\{v_1,\ldots,v_k\}$ is a linearly independent set of vectors in
$\mathbb{F}_q^n$. Then the vectors $v_1,\ldots,v_k$ correspond to pairwise distinct
eigenvalues of $M$.
\end{lemma}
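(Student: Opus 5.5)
The plan is to argue by contradiction using Lemma~\ref{collinear}'s key computation. Suppose two of the vectors, say $v_i$ and $v_j$ with $i\neq j$, belong to the same eigenvalue $\lambda$ of $M$. Then $v_i,v_j\in\ker(M-\lambda I_n)$, and this kernel is a subspace. Hence every nonzero vector of $\langle v_i,v_j\rangle$ is an eigenvector of $M$. The target is to produce a point of $N(\langle M\rangle)$ outside the listed set $\{\langle v_1\rangle,\ldots,\langle v_k\rangle\}$.

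Concretely, I would take $w=v_i+v_j$. Then $Mw=\lambda v_i+\lambda v_j=\lambda w$ and $w\neq 0$, since $v_i,v_j$ are linearly independent. So $\langle w\rangle\in N(\langle M\rangle)$ by Definition~\ref{a1}, and therefore $\langle w\rangle=\langle v_l\rangle$ for some $l$, i.e.\ $v_i+v_j=\alpha v_l$ with $\alpha\neq0$.

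Next I would split into cases. If $l\notin\{i,j\}$, this is a nontrivial linear dependence among $v_i,v_j,v_l$, contradicting the linear independence of $\{v_1,\ldots,v_k\}$. If $l=i$, then $v_j=(\alpha-1)v_i$. This is a dependence between $v_i$ and $v_j$, again a contradiction; this covers $\alpha=1$ too, since then $v_j=0$, which is impossible. The case $l=j$ is symmetric. Thus distinct $v_i$ give distinct eigenvalues.

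The only delicate point is bookkeeping: the listed points $\langle v_i\rangle$ are the whole neighbourhood, so the new point $\langle w\rangle$ must coincide with one of them. Over $\mathbb{F}_2$ the vector $v_i+v_j$ is the only new option, and it still works. I see no real obstacle beyond stating the case analysis cleanly.
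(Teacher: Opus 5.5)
Your argument is correct and is the same as the paper's: both observe that $v_i+v_j$ is a nonzero vector in the $\lambda$-eigenspace, so $\langle v_i+v_j\rangle$ lies in $N(\langle M\rangle)$, which contradicts the neighbourhood being exactly $\{\langle v_1\rangle,\ldots,\langle v_k\rangle\}$. Your case analysis ($l\notin\{i,j\}$, $l=i$, $l=j$) spells out why $\langle v_i+v_j\rangle$ cannot equal any listed $\langle v_l\rangle$, which the paper only asserts; the opening mention of Lemma~\ref{collinear} is unnecessary, since you never use it.
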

\begin{proof}
Suppose, to the contrary, that there exist distinct indices $i,j\in
\{1,\ldots,k\}$ such that
$Mv_i=\lambda v_i
\quad\text{and}\quad
Mv_j=\lambda v_j
$ for some $\lambda\in \mathbb{F}_q$.
Since $v_i$ and $v_j$ are linearly independent,
$v_i+v_j\neq0$. Moreover,
\[
M(v_i+v_j)
=
Mv_i+Mv_j
=
\lambda v_i+\lambda v_j
=
\lambda(v_i+v_j).
\]
Hence $v_i+v_j$ is also an eigenvector of $M$ corresponding to the
eigenvalue $\lambda$. Therefore, $\langle v_i+v_j\rangle\in N(\langle M\rangle),$ a contradiction,
as $
\langle v_i+v_j\rangle\notin  \{\langle v_1\rangle,\ldots,\langle v_k\rangle\}$
Therefore,  the corresponding eigenvalues are pairwise distinct.
\end{proof}

Recall that for a non-zero subspace $W$ of $\mathbb{F}_q^n$, the set
$\mathbb{P}(W)=\{\langle v\rangle: \, 0\neq v\in W\}$ is called the projectivization of $W$.
\begin{theorem}\label{accursneighbor}
Let $W$ be a non-zero subspace of $\mathbb{F}_q^n$ with
$dim W=k$ .
Then there exists a matrix $M\in E_n(\mathbb{F}_q)$ whose only eigenvalue is 1,
such that
$N(\langle M\rangle)=\mathbb{P}(W).$
In other words, every projective subspace of dimension $k-1$ of
$\mathbb{P}^{n-1}(\mathbb{F}_q)$ occurs as the neighborhood of a matrix vertex of
$\mathbf{SIG}(\mathbb{V}_0)$.
\end{theorem}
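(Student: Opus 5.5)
Fix a basis $\mathcal{B}=\{w_1,\dots,w_k,w_{k+1},\dots,w_n\}$ of $\mathbb{F}_q^n$ whose first $k$ vectors form a basis of $W$, and let $P$ be the invertible matrix with columns $w_1,\dots,w_n$. By Lemma \ref{changeB} and Remark \ref{similarmatrices}, it suffices to build a matrix $J$ in the basis $\mathcal{B}$ whose only eigenvalue is $1$ and whose eigenspace $\ker(J-I_n)$ is exactly $\langle e_1,\dots,e_k\rangle$. Then $M=PJP^{-1}$ has only the eigenvalue $1$ and eigenspace $P\langle e_1,\dots,e_k\rangle=W$. Since $M$ has a single eigenvalue, Theorem \ref{regular}(1) gives $N(\langle M\rangle)=\mathbb{P}(\ker(M-I_n))=\mathbb{P}(W)$. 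Also $M\neq0$ and $M$ has an eigenvector, so $M\in E_n(\mathbb{F}_q)$.

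For $J$ I take the block-diagonal Jordan form
\[
J=J_{n-k+1}(1)\oplus I_{k-1},
\]
that is, one Jordan block of size $n-k+1$ with eigenvalue $1$, followed by $k-1$ trivial blocks equal to $1$. We have $k\geq1$ because $W\neq0$. When $k=n$, $J=I_n$ and $W=\mathbb{F}_q^n$, which is the case of Proposition \ref{scalarmatrix}.

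Next I will verify the properties of $J$. Since $J$ is upper triangular with all diagonal entries $1$, its characteristic polynomial is $(x-1)^n$, so $\mathrm{Spec}(J)=\{1\}$. The matrix $J-I_n$ is the direct sum of the nilpotent shift $J_{n-k+1}(0)$, whose kernel is spanned by its first basis vector, and the zero block of size $k-1$. Hence $\dim\ker(J-I_n)=1+(k-1)=k$.

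The only point needing care is ordering the basis so that this kernel is exactly $\langle e_1,\dots,e_k\rangle$. In the Jordan block the kernel vector is the first vector of that block. So I will place the Jordan block so that its first basis vector is $e_1$, and let the trivial blocks use $e_2,\dots,e_k$. Concretely, I conjugate $J$ by the permutation matrix reordering the basis as $e_1,e_{k+1},\dots,e_n,e_2,\dots,e_k$ for the block order above. Equivalently, I can replace $P$ by the matrix with columns $w_1,w_{k+1},\dots,w_n,w_2,\dots,w_k$.

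With this ordering, $\ker(J-I_n)$ is spanned by $w_1,w_2,\dots,w_k$ after the change of basis. That span is $W$, which completes the construction.
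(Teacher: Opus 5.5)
Your proposal is correct and uses essentially the paper's construction. Both take $M$ similar to $I_{k-1}\oplus J_{n-k+1}(1)$, with the basis ordered so that the eigenvector of the nontrivial Jordan block, together with the $k-1$ fixed vectors, spans $W$; the paper places that eigenvector at $w_k$ and you place it at $w_1$. Both then use that $1$ is the only eigenvalue to get $N(\langle M\rangle)=\mathbb{P}(\ker(M-I_n))=\mathbb{P}(W)$, and your reordering of $P$ to columns $w_1,w_{k+1},\dots,w_n,w_2,\dots,w_k$ correctly fixes your initial claim that the kernel is $\langle e_1,\dots,e_k\rangle$.
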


\begin{proof}
Let $dim W=k.$
Choose a basis
$\{w_1,\ldots,w_k\}$
of $W$ and extend it to a basis of $\mathbb{F}_q^n$ as
$\{w_1,\ldots,w_k,u_1,\ldots,u_{n-k}\}.$
Define a linear transformation $M:\mathbb{V}_0\rightarrow \mathbb{V}_0$ by
$M(w_i)=w_i,$ for $1\leq i\leq k-1,$
and define its restriction to the subspace
$U=\langle w_k,u_1,\ldots,u_{n-k}\rangle$
by $M|_U=J_{n-k+1}(1).$
Hence, if $k< n$ with respect to the above basis of $\mathbb{F}^n_q$, we have
$M$ is similar to $ I_{k-1}\oplus J_{n-k+1}(1)$ and if $k=n$ then $M$ is similar to $I_n$.
Now, $M-I_n$ acts as zero transformation on
$\langle w_1,\ldots,w_{k-1}\rangle$ and $ker(J_{n-k+1}(1)-I)=\langle w_k\rangle$.
So $dim (ker(J_{n-k+1}(1)-I))=1.$ Moreover, $1$ is the only eigenvalue of $M$. Hence every eigenvector
of $M$ belongs to the eigenspace corresponding to the eigenvalue $1$.
Therefore, $ dim (ker(M-I_n))=(k-1)+1=k.$
Thus, $\mathcal{E}(M)=ker(M-I_n)=W.$
So,
$N(\langle M\rangle)
=\{\langle v\rangle:v\in \mathcal{E}(M)\setminus\{0\}\}.$
Consequently,
$N(\langle M\rangle)
=\mathbb{P}(W).$
\end{proof}
\begin{theorem}\label{thm:scalarcriterion}Let $B=\{v_1,v_2,\ldots,v_n\}$ be a basis of $\mathbb F_q^n$, and
 let $u=\sum_{i=1}^n a_i v_i$ be a non-zero vector such that $a_i\neq 0$ for every $1\le i\le n$.
 Suppose that $\{\langle v_1\rangle,\ldots,\langle v_n\rangle,\langle u\rangle\} \subseteq N(\langle M\rangle)$
for a matrix vertex $\langle M\rangle$ of $\operatorname{SIG}(V_0)$.Then $M$ is a scalar matrix. Consequently,
$N(\langle M\rangle)=\mathbb P^{n-1}(\mathbb F_q)$ and
$|N(\langle M\rangle)| =\frac{q^n-1}{q-1}.$
\end{theorem}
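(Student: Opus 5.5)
The plan is to imitate the proof of Proposition~\ref{scalarmatrix}, working in the basis $B$ instead of the standard basis. Since $\langle v_i\rangle\in N(\langle M\rangle)$ for every $i$, there are scalars $\lambda_1,\ldots,\lambda_n\in\mathbb F_q$ with $Mv_i=\lambda_i v_i$. By Lemma~\ref{changeB}, the matrix of $M$ relative to $B$ is then the diagonal matrix $\mathrm{diag}(\lambda_1,\ldots,\lambda_n)$.

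Next I use the hypothesis $\langle u\rangle\in N(\langle M\rangle)$, which gives some $\mu\in\mathbb F_q$ with $Mu=\mu u$. Expanding by linearity,
\[
\sum_{i=1}^n a_i\lambda_i v_i \;=\; Mu \;=\; \mu u \;=\; \sum_{i=1}^n a_i\mu\, v_i .
\]
Since $B$ is a basis, I can compare coefficients to get $a_i\lambda_i=a_i\mu$ for every $i$. This is the one place where the hypothesis $a_i\neq0$ for all $i$ is used: it lets me cancel $a_i$ and conclude $\lambda_i=\mu$ for all $i$. Consequently $Mv_i=\mu v_i$ for every basis vector, so $M=\mu I_n$ is a scalar matrix. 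Because $M\in E_n(\mathbb F_q)$ is nonzero, in fact $\mu\neq0$, although this is not needed for the argument.

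For the consequence, every nonzero vector $v$ satisfies $Mv=\mu v$, so $N(\langle M\rangle)=\mathbb P^{n-1}(\mathbb F_q)$. Alternatively, this follows directly from Proposition~\ref{scalarmatrix}. The cardinality $|\mathbb P^{n-1}(\mathbb F_q)|=\frac{q^n-1}{q-1}$ is the standard count already used in Theorem~\ref{regular}.

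I do not expect a real obstacle. The only point needing care is the coefficient comparison, which relies on $B$ being a basis and on all $a_i$ being nonzero. Without the latter assumption, only the $\lambda_i$ with $a_i\neq0$ would be forced to agree.
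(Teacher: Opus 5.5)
Your proposal is correct and follows essentially the same argument as the paper: you take $\lambda_i$ from $Mv_i=\lambda_i v_i$ and $\mu$ from $Mu=\mu u$, compare coefficients in the basis $B$, use $a_i\neq 0$ to force $\lambda_i=\mu$, and conclude $M=\mu I_n$, so every nonzero vector is an eigenvector. The detour through Lemma~\ref{changeB} to view $M$ as diagonal relative to $B$ is harmless but unnecessary, since $Mv_i=\mu v_i$ on a basis already gives $M=\mu I_n$.
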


\begin{proof}Since $\langle v_i\rangle\in N(\langle M\rangle)$ for every $1\le i\le n$, there exist
 $\lambda_1,\ldots,\lambda_n\in\mathbb F_q$ such that
$Mv_i=\lambda_i v_i$ for every $1\le i\le n$.
Since $\langle u\rangle\in N(\langle M\rangle)$, there exists a $\lambda\in\mathbb F_q$ such that
$Mu=\lambda u$. On the other hand,
$Mu =M\left(\sum_{i=1}^n a_i v_i\right) =\sum_{i=1}^n a_i\lambda_i v_i$, whereas
$\lambda u=\sum_{i=1}^n a_i\lambda v_i$. Since $v_1,\ldots,v_n$ is a basis,
 comparison of coefficients gives
$a_i(\lambda_i-\lambda)=0,$ for every $1\le i\le n$. Because $a_i\neq0$ for every $i$, we obtain
$\lambda_1=\cdots=\lambda_n=\lambda$. Thus
$Mv_i=\lambda v_i$, for every $1\le i\le n$, and hence
$M=\lambda I_n$. Therefore, every non-zero vector of $\mathbb F_q^n$ is an eigenvector of $M$, and consequently
$N(\langle M\rangle)=\mathbb P^{n-1}(\mathbb F_q)$. The number
of one-dimensional subspaces of $\mathbb F_q^n$ is
$\frac{q^n-1}{q-1}$, which proves the result.
\end{proof}
In Section \ref{section3} for determining the group $Aut(\mathbf{SIG}(\mathbb{V}_0))$ for $n=2$,
  the following Lemma has a substantial role.
  \begin{lemma}\label{caseN(M)} Let $n=2$.  Then the following statements hold:
\begin{enumerate}
  \item For each pair of distinct vertices $\langle u\rangle$, $\langle v\rangle$
   of the part $\mathbb{P}^1(\mathbb{F}_q)$ there exist exactly q matrix vertices
   $\langle M\rangle \in \mathcal{E}^{1}(\mathbb{F}_q)$ such that
   $N(\langle M\rangle)= \{ \langle u\rangle, \langle v\rangle\}$ and every such matrix
   $M$  is similar to $ J_1(\lambda)\oplus J_1(\mu),$  for two scalars  $\lambda\neq\mu \in \mathbb{F}_q$.
  \item  For every $\langle v\rangle \in\mathbb{P}^1(\mathbb{F}_q)$ there exist exactly $q$ matrix vertices
   $\langle M\rangle \in \mathcal{E}^{1}(\mathbb{F}_q)$ such that
   $N(\langle M\rangle)= \{\langle v\rangle\}$ and every such  matrix $M$ is similar
   to $J_2(\lambda)$ for a scalar $\lambda \in \mathbb{F}_q$.
  \item If $|N(\langle M \rangle)|\geq 3$ then $M=\lambda I_2$ for some $0\neq\lambda\in \mathbb{F}_q$,
   so $N(\langle M\rangle)=\mathbb{P}^1(\mathbb{F}_q)$ and hence the
   number of such a vertex $\langle M\rangle$ with
      $|N(\langle M\rangle)|\geq 3$ is 1
       and that is $\langle M\rangle=\langle I_2\rangle$.
  \item For every vertex $\langle M\rangle$ in part $\mathcal{E}^1(\mathbb{F}_q)$
   exactly one of the above three
  cases holds and hence we have:
  \[|t(\langle M\rangle)|=
  \begin{cases}
    q, & \mbox{if } |N(\langle M\rangle)|=2, \\
    q, & \mbox{if } |N(\langle M\rangle)|=1, \\
    1, & \mbox{if } |N(\langle M\rangle)|=q+1,
  \end{cases}\]
   \end{enumerate}
\end{lemma}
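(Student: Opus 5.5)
The plan is to classify nonzero matrices $M\in E_2(\mathbb{F}_q)$ by the structure of their eigenspaces in $\mathbb{F}_q^2$, and then count projective classes in each case.

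\emph{Case analysis.} Since $M$ has an eigenvector, its characteristic polynomial has a root $\lambda\in\mathbb{F}_q$, so it splits over $\mathbb{F}_q$. Hence $M$ is similar to exactly one of the following:
\begin{itemize}
\item $\lambda I_2$;
\item $J_2(\lambda)$;
\item $J_1(\lambda)\oplus J_1(\mu)$ with $\lambda\neq\mu$.
\end{itemize}
By Theorem~\ref{regular}(1), these have degrees $q+1$, $1$ and $2$, respectively.

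\emph{Part (3).} If $|N(\langle M\rangle)|\ge 3$, pick three distinct neighbours $\langle v_1\rangle,\langle v_2\rangle,\langle u\rangle$. In dimension $2$, $\{v_1,v_2\}$ is a basis and $u=a_1v_1+a_2v_2$ with both $a_i\neq0$, since $\langle u\rangle$ differs from $\langle v_1\rangle$ and $\langle v_2\rangle$. Theorem~\ref{thm:scalarcriterion} then gives $M=\lambda I_2$ with $\lambda\neq0$, and so $\langle M\rangle=\langle I_2\rangle$. This also shows the three cases in (4) are exhaustive and mutually exclusive, as they are distinguished by $|N(\langle M\rangle)|\in\{1,2,q+1\}$, with $q+1\ge3$.

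\emph{Part (1).} Fix distinct points $\langle u\rangle,\langle v\rangle$. Relative to the basis $\{u,v\}$ (Lemma~\ref{changeB}), the matrices having both $u$ and $v$ as eigenvectors are exactly the matrices $\mathrm{diag}(\lambda,\mu)$. Those whose neighbourhood is exactly $\{\langle u\rangle,\langle v\rangle\}$ are the ones with $\lambda\neq\mu$; if $\lambda=\mu$ the matrix is scalar, with neighbourhood everything. This gives $q^2-q$ matrices, automatically nonzero. Each projective class contains $q-1$ of them, so there are $q(q-1)/(q-1)=q$ vertices, each similar to $J_1(\lambda)\oplus J_1(\mu)$.

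\emph{Part (2).} Fix $\langle v\rangle$. By Theorem~\ref{count}(1), exactly $q^{3}$ matrices have $v$ as an eigenvector. From these we subtract:
\begin{itemize}
\item the $q$ scalar matrices (including $0$);
\item for each of the other $q$ points $\langle w\rangle$, the $q^2-q$ matrices whose neighbourhood is exactly $\{\langle v\rangle,\langle w\rangle\}$, counted in Part (1).
\end{itemize}
These sets are disjoint by (3) and (4), so the remaining count is
\[
q^3-q-q(q^2-q)=q^2-q .
\]
Dividing by $q-1$ gives $q$ vertices, whose matrices are nonscalar with a single eigendirection, hence similar to $J_2(\lambda)$. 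As a cross-check, the same count comes from writing the matrix as $\begin{pmatrix}\lambda&b\\0&\lambda\end{pmatrix}$ with $b\neq0$ in a basis $\{v,w\}$; one must first argue that both diagonal entries coincide, otherwise a second eigendirection appears.

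\emph{Part (4).} By Lemma~\ref{twinmatvert}, twins are exactly the vertices with the same neighbourhood. Parts (1)–(3) then give twin class sizes $q$, $q$ and $1$.

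I expect the main obstacle to be the bookkeeping in (2): ensuring the subtraction is exact, i.e.\ that the sets removed are disjoint and exhaust all matrices with $v$ as eigenvector and more than one eigendirection. I would settle this by first recording the degree trichotomy in (4) via the similarity classification, and only then do the counts.
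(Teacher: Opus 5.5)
Your proposal is correct. Parts (1), (3) and (4) follow the paper's proof closely: you diagonalize in the basis $\{u,v\}$ with $\lambda\neq\mu$, apply Theorem~\ref{thm:scalarcriterion} when there are three neighbours, and read off the twin-class sizes. The only difference there is that you front-load the Jordan trichotomy $\lambda I_2$, $J_2(\lambda)$, $J_1(\lambda)\oplus J_1(\mu)$, where the paper invokes Lemma~\ref{DistinctEigenvaluesLemma} in (1).

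The genuine difference is in (2). The paper counts directly. In a basis $\{v,u\}$ it writes every matrix whose only eigendirection is $\langle v\rangle$ as $\begin{pmatrix}\lambda&a\\0&\lambda\end{pmatrix}$ with $a\neq0$, which gives $q(q-1)$ matrices. It justifies the equal diagonal entries by noting that $M$ has a single eigenvalue. You instead obtain $q^2-q$ by complement. From the $q^3$ matrices of Theorem~\ref{count}(1) you subtract the $q$ scalars and the $q$ pairwise disjoint families of size $q^2-q$ coming from (1).

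Your route forces the order (1), (3) before (2) and depends on the global count. In exchange, it checks that the three types exhaust all matrices having $v$ as an eigenvector, and it gets the $J_2(\lambda)$ shape for free from the trichotomy. The paper's direct parametrization is shorter, self-contained, and exhibits the $J_2(\lambda)$ form explicitly. Your own cross-check, where you note that the two diagonal entries must coincide, is exactly the paper's argument.
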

\begin{proof}
\begin{enumerate}
  \item In the basis $\mathcal{B}=\{u,v\}$ of $\mathbb{F}_q^2$, consider an arbitrary
  matrix $M$, where $[M]_\mathcal{B}=J_1(\lambda)\oplus J_1(\mu),$
   for scalars $\lambda\neq\mu \in \mathbb{F}_q$.
     Clearly $[M]_\mathcal{B}$ has $\langle[u]_B\rangle$ and $\langle[v]_B\rangle$
     as its all eigendirections.
    So by Lemma \ref{changeB} in the standard basis
    $N(\langle M\rangle)= \{ \langle u\rangle, \langle v\rangle\}$.
  Also, using Lemma \ref{DistinctEigenvaluesLemma} every matrix
   $M$ satisfying $N(\langle M\rangle)=\{\langle u\rangle,\langle v\rangle\}$
    is diagonalizable with distinct eigenvalues $\lambda,\mu\in\mathbb F_q$, and,
     relative to the basis $\mathcal B=\{u,v\}$, has the form
     $J_1(\lambda)\oplus J_1(\mu)$, where $\lambda\ne\mu$. There are $q$ choices for $\lambda$ and $q-1$
     choices for $\mu$, so we have
 $q(q-1)$ such matrices in this fixed basis.
       Since each projective matrix vertex contains exactly
    $q-1$ non-zero scalar multiples, dividing by $q-1$,
    we have $|t(\langle M\rangle)|=\frac{q(q-1)}{q-1}=q.$\\
 \item Suppose that
$N(\langle M\rangle)=\{\langle v\rangle\}$
for some $\langle v\rangle\in\mathbb P^1(\mathbb F_q)$.
 Choose $u\in\mathbb V_0$ such that
$\mathcal B=\{v,u\}$
is a basis of $\mathbb V_0$.
 Since $\langle v\rangle$ is the only eigendirection of $M$,
 the matrix $M$ has exactly one eigenvalue, say $\lambda\in\mathbb F_q$,
 and its eigenspace is
$E_\lambda(M)=\langle v\rangle.$
Consequently, $M$ is not diagonalizable. Hence, with respect to the basis
 $\mathcal B$, its Jordan canonical form is
 $J_2(\lambda)=
\begin{pmatrix}
\lambda & 1\\
0 & \lambda
\end{pmatrix}.$
More explicitly, every matrix having $\langle v\rangle$ as its unique eigendirection has
 the form
$
\begin{pmatrix}
\lambda & a\\
0 & \lambda
\end{pmatrix}$  with respect to the basis $\mathcal B$,  where
$\lambda\in\mathbb F_q$ and $0\neq a\in\mathbb F_q.$
Indeed, the condition $a\ne0$ is precisely what guarantees that the eigenspace is one-dimensional.
 Thus there are $q(q-1)$ such matrices. Since two non-zero scalar multiples
determine the same matrix vertex, each matrix vertex
 contains exactly $q-1$ non-zero scalar multiples. Therefore,
  the number of matrix vertices having ${\langle v\rangle}$ as their neighborhood is
$|t(\langle M\rangle)|=\frac{q(q-1)}{q-1}=q.$

 \item Let $\{\langle v_1 \rangle, \langle v_2\rangle, \langle u\rangle\}\subseteq $$N(\langle M\rangle)$,
  where $ \langle v_1 \rangle, \langle v_2\rangle $ and $\langle u\rangle $ are three distinct vertices.
 Then clearly $\{ v_1, v_2\}$ is a basis for $\mathbb{F}_q^2$ and
 $u=a_1v_1+a_2v_2$ for two non-zero scalars $a_1, a_2 \in \mathbb{F}_q$. So
  by Theorem \ref{thm:scalarcriterion} $M$ is a scalar
 matrix. Hence $N(\langle M\rangle)=|\mathbb{P}^1(\mathbb{F}_q)|$ and therefore
 have $deg (\langle M\rangle)=|\mathbb{P}^1(\mathbb{F}_q)|=q+1 $. Since $M$ is a scalar
 matrix  $|t(\langle M\rangle)|= 1$.
 \item  By the statements in above three cases since $|N(\langle M\rangle)|=deg(\langle M\rangle)$, we have
    $|N(\langle M\rangle)|=1$ or $2$ or $|N(\langle M\rangle)|\geq 3$. Clearly only one of the above three cases hold.
 \end{enumerate}
\end{proof}
    Note that if $M\in E_2(\mathbb{F}_q)$, then its characteristic polynomial has
degree two. Hence the number of linearly independent eigendirections is
determined by the possible Jordan canonical forms of $M$ (see \cite[Corollary 4.7]{Hungerford}).
So we have only three stated cases in Lemma \ref{caseN(M)}.
For n=2 we summarize some properties of $\mathbf{SIG}(\mathbb{V}_0))$ in Table 1.
 Here, $dim E(M)$ denotes the  distributions of dimensions of the eigenspaces  of $M$.
\[\underset{Table1\quad \text{Possible cases for a matrix M where}\, n=2 \label{tabeln=3}}{
 {\footnotesize \begin{array}{|c|c|c|c|c|c|}
 \hline
\text{Row}&\chi_M(x)&\text{Jordan Canonical form }& dimE(M)&\deg(\langle M\rangle)&|t(\langle M\rangle)|\\
\hline
1&(x-\lambda)^2&J_2(\lambda)&(1)&1&q\\
\hline
2&(x-\lambda)^2&J_1(\lambda)\oplus J_1(\lambda)&(2)&q+1&1\\
\hline
3&(x-\lambda)(x-\mu)&J_1(\lambda)\oplus J_1(\mu), \lambda \neq\mu& (1,1)&2&q\\
\hline
\end{array}}}
\]
\begin{theorem}\label{TwinNontrivial}
Let $M\in E_n(\mathbb{F}_q)$ be a non-scalar matrix. Then
$t(\langle M\rangle)$ is a nontrivial twin class in $\mathbf{SIG}(\mathbb{V}_0)$.
More precisely, for every non-zero scalar
$c\in\mathbb F_q$, if $N=M+cI_n,$
then $t(\langle N\rangle)= t(\langle M\rangle)$.
\end{theorem}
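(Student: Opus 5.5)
The plan is to show directly that shifting by a scalar preserves the set of eigenvectors, and then to use the non-scalar hypothesis to guarantee that the shifted matrix gives a genuinely different vertex.

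\emph{Step 1: invariance of eigenvectors.} For $0\neq v\in\mathbb F_q^n$ and $N=M+cI_n$ we have
\[
Mv=\lambda v \iff Nv=(\lambda+c)v .
\]
Hence $\ker(N-(\lambda+c)I_n)=\ker(M-\lambda I_n)$ for every $\lambda$, and $\mathrm{Spec}(N)=\mathrm{Spec}(M)+c$. It follows that $\mathcal E(N)=\mathcal E(M)$. In particular $N$ has an eigenvector. Moreover $N\neq 0$, since otherwise $M=-cI_n$ would be scalar. So $N\in E_n(\mathbb F_q)$, and by Lemma~\ref{twinmatvert} we get $t(\langle N\rangle)=t(\langle M\rangle)$.

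\emph{Step 2: nontriviality.} We must check that $\langle N\rangle\neq\langle M\rangle$. Suppose instead that $M+cI_n=\alpha M$ for some nonzero $\alpha\in\mathbb F_q$. Then $(\alpha-1)M=cI_n$.
\begin{itemize}
\item If $\alpha=1$, this gives $cI_n=0$, contradicting $c\neq0$.
\item If $\alpha\neq1$, then $M=c(\alpha-1)^{-1}I_n$ is scalar, contradicting the hypothesis.
\end{itemize}
Thus $\langle N\rangle$ and $\langle M\rangle$ are distinct vertices in the same twin class, so $|t(\langle M\rangle)|\geq 2$.

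\emph{Existence of $c$.} Step 2 needs at least one nonzero $c\in\mathbb F_q$, which is available because $q\geq2$; one may take $c=1$.

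The only real obstacle is Step 2, because equality of projective vertices allows an arbitrary scalar multiple, not just literal equality of matrices. The case split on $\alpha$ above handles this, and the non-scalar hypothesis is exactly what rules out the case $\alpha\neq1$.
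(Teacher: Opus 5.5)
Your proof is correct and follows the paper's route: both show $Mv=\lambda v \iff (M+cI_n)v=(\lambda+c)v$, so the eigenvector sets (hence the neighbourhoods) coincide, and both turn $\langle N\rangle=\langle M\rangle$ into $cI_n=(\alpha-1)M$ to contradict non-scalarity. Your explicit checks that $N\neq 0$ (needed for $N\in E_n(\mathbb F_q)$ under the paper's definition) and that a nonzero $c$ exists are small details the paper leaves implicit.
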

\begin{proof}
Fix a non-zero $c\in\mathbb F_q$ and put
$N=M+cI_n.$ Since $M\in E_n(\mathbb F_q)$, there exists a non-zero vector
$v\in\mathbb V_0$ and a $\lambda\in\mathbb F_q$ such that
$Mv=\lambda v.$
Consequently, $
Nv=(M+cI_n)v=(\lambda+c)v$. Thus $v$ is also an eigenvector of $N$, and hence
$N\in E_n(\mathbb F_q)$. Moreover, for every $v\in\mathbb V_0\setminus\{0\}$, we have
\[
Mv=\lambda v
\quad\Longleftrightarrow\quad
(M+cI_n)v=(\lambda+c)v.
\]
Therefore $M$ and $N$ have exactly the same eigenvectors over
$\mathbb F_q$. In particular, they have the same projective
eigenspaces. By the definition of adjacency in
$\mathbf{SIG}(\mathbb V_0)$, it follows that $t(\langle N\rangle)= t(\langle M\rangle)$.
It remains to show that $\langle M\rangle$ and $\langle N\rangle$
are distinct matrix vertices. Suppose, to the contrary, that
$\langle N\rangle=\langle M\rangle$.
Then, by the definition of the projective matrix vertices, there
exists some $a\in\mathbb F_q$ such that
$N=aM$. Since $N=M+cI_n$, we obtain
$M+cI_n=aM$ and hence $cI_n=(a-1)M$.
Because $c\neq0$, this implies that $M$ is a scalar matrix,
contrary to the hypothesis. Thus
$\langle N\rangle\neq\langle M\rangle$.
Hence $\langle M\rangle$ and $\langle N\rangle$ are distinct
vertices having the same neighborhood, and consequently
$\langle M\rangle$ belongs to a nontrivial twin class.
\end{proof}

The following Theorem is about the twin classes of $\mathbf{SIG}(\mathbb{V}_0)$.
\begin{theorem}\label{twinpoints} For the graph $\mathbf{SIG}(\mathbb{V}_0)$ the following statements hold:
\begin{enumerate}
  \item  There is no nontrivial twin class in the part $\mathbb{P}^{n-1}(\mathbb{F}_q)$.
  \item  In part $\mathcal{E}^{n-1}(\mathbb{F}_q)$ there exists a positive integer $r$
  such that \[\mathcal{E}^{n-1}(\mathbb{F}_q)= \{\langle I_n\rangle\}\cup
   \mathcal{C}_2\cup \cdots \cup \mathcal{C}_r\] is the partition of the part
  $\mathcal{E}^{n-1}(\mathbb{F}_q)$ into twin classes, where every $\mathcal{C}_i$ is a nontrivial
   twin class.
 \end{enumerate}
\end{theorem}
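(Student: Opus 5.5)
For part (1), I will show that two distinct point vertices $\langle u\rangle\neq\langle v\rangle$ always have different neighbourhoods. By Lemma~\ref{degree1} there is a matrix $A\in E_n(\mathbb{F}_q)$ whose only eigenspace is $\langle u\rangle$; concretely, $A=PJ_n(\lambda)P^{-1}$ for a basis beginning with $u$. Then $N(\langle A\rangle)=\{\langle u\rangle\}$, so $\langle A\rangle\in N(\langle u\rangle)$. Since $\langle v\rangle\neq\langle u\rangle$, $v$ is not an eigenvector of $A$, so $\langle A\rangle\notin N(\langle v\rangle)$. Hence $N(\langle u\rangle)\neq N(\langle v\rangle)$, and every twin class in $\mathbb{P}^{n-1}(\mathbb{F}_q)$ is a singleton. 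Alternatively one could use Theorem~\ref{accursneighbor} with $W=\langle u\rangle$. Because the graph is bipartite with both parts preserved by the neighbourhood relation, no point vertex can be twin with a matrix vertex: a point has neighbours among matrices and a matrix has neighbours among points, and all degrees are at least $1$ by Theorem~\ref{regular}.

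For part (2), I will partition $\mathcal{E}^{n-1}(\mathbb{F}_q)$ into twin classes, which is possible because the twin relation $\mathcal R$ is an equivalence relation. By the bipartiteness remark above, each twin class of a matrix vertex lies entirely in $\mathcal{E}^{n-1}(\mathbb{F}_q)$.

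First, I will show that $t(\langle I_n\rangle)=\{\langle I_n\rangle\}$. By Proposition~\ref{scalarmatrix}, a matrix vertex adjacent to every point is $\langle \lambda I_n\rangle=\langle I_n\rangle$, so this class is trivial. Next, every other matrix vertex $\langle M\rangle$ has $M$ non-scalar, since any nonzero scalar matrix spans $\langle I_n\rangle$. Theorem~\ref{TwinNontrivial} then shows that $t(\langle M\rangle)$ is nontrivial, because it contains the distinct vertex $\langle M+cI_n\rangle$ for any $c\neq0$. Finiteness of the vertex set gives finitely many classes, which I label $\mathcal C_2,\dots,\mathcal C_r$.

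It remains to check that $r$ is a positive integer with at least one nontrivial class, so that the statement is not vacuous. Lemma~\ref{degree1} supplies a non-scalar vertex, so $r\geq2$.

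The main subtlety is the step $t(\langle M\rangle)\neq\{\langle M\rangle\}$ when $q=2$, since the only nonzero choice is then $c=1$. Theorem~\ref{TwinNontrivial} already covers this case, since $\langle M+I_n\rangle\neq\langle M\rangle$ for non-scalar $M$. I therefore expect no real obstacle beyond verifying that $M+cI_n$ is nonzero, which holds because $M$ is non-scalar.
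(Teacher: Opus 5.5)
Your proof is correct. Part (2) follows the paper's argument, which rests on Theorem~\ref{TwinNontrivial}. You also make explicit two points the paper leaves tacit: $t(\langle I_n\rangle)$ is a singleton by Proposition~\ref{scalarmatrix}, and no twin class can straddle the two parts, since all degrees are positive.

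For part (1) you take a genuinely different route. The paper argues by counting. If $\langle v_1\rangle,\langle v_2\rangle$ were twins, then $|N(\langle v_1\rangle)\cup N(\langle v_2\rangle)|=|N(\langle v_1\rangle)|$. But inclusion--exclusion together with Theorem~\ref{intersection} (for $k=1,2$) gives this union the size $2\frac{q^{n^2-n+1}-1}{q-1}-\frac{q^{n^2-2n+2}-1}{q-1}$, which is strictly larger because $n\geq 2$.

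You instead exhibit an explicit distinguishing vertex: the matrix vertex from Lemma~\ref{degree1} whose neighbourhood is exactly $\{\langle u\rangle\}$. It is adjacent to $\langle u\rangle$ but not to $\langle v\rangle$.

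What each buys: your argument is shorter and needs no enumeration, and it reuses the ``private neighbour'' fact that drives the domination-number computation in Theorem~\ref{a2}. The paper's counting argument gives quantitative information instead. It shows that $N(\langle u\rangle)\setminus N(\langle v\rangle)$ has exactly $\frac{q^{n^2-n+1}-q^{n^2-2n+2}}{q-1}$ elements, not merely that it is nonempty.
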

\begin{proof}\indent (1) Let $\langle v_1\rangle$ and $\langle v_2\rangle$ be
arbitrary two distinct vertices in part $\mathbb{P}^{n-1}(\mathbb{F}_q)$.
 If they are twins then we have $|N(\langle v_1\rangle))\cup
  N(\langle v_2\rangle| = |N(\langle v_1\rangle)|=\frac{q^{n^2-n+1}-1}{q-1}$, by part $(1)$ of
 Theorem \ref{count}.
 On the other hand, by inclusion-exclusion Theorem and  Theorem \ref{intersection} we
  have:\\$|N(\langle v_1\rangle)\cup N(\langle v_2\rangle)| = |N(\langle v_1\rangle)|+
  |N(\langle v_2\rangle)|-|N(\langle v_1\rangle)\cap N(\langle v_2\rangle)|= 2\frac{q^{n^2-n+1}-1}{q-1}\\-\frac{q^{n^2-2n+2}-1}{q-1}=
\frac{q^{n^2-n+1}(2-q^{1-n})-1}{q-1}\neq\frac{q^{n^2-n+1}-1}{q-1}$, a contradiction.
So $\langle v_1\rangle$ and $\langle v_2\rangle$ are not twins.\\
\indent (2)
By Theorem \ref{TwinNontrivial} there exist a
  positive integer $r$ such that $\mathcal{E}^{n-1}(\mathbb{F}_q)=
   \{\langle I_n\rangle\}\cup \mathcal{C}_2\cup\cdots \cup \mathcal{C}_r$ is the decomposition
    of part $\mathcal{E}^{n-1}(\mathbb{F}_q)$ into twin classes, such that every $\mathcal{C}_i$
     is a nontrivial twin class.
\end{proof}
\section{\bf Automorphism group of $\mathbf{SIG}(\mathbb{V}_0$)}\label{section3}
The aim of this section is to determine the
  automorphism group of $\mathbf{SIG}(\mathbb{V}_0)$. We first
 introduce some notations and definitions and then establish  a sequence of lemmas and
theorems leading to the main results.\par
Note that a graph automorphism of the bipartite graph
$\mathbf{SIG}(\mathbb{V}_0)$ is a bijection
 \[\varphi :\mathcal{E}^{n-1}(\mathbb{F}_q )\cup \mathbb P^{n-1}(\mathbb{F}_q)\longrightarrow \mathcal{E}^{n-1}(\mathbb{F}_q) \cup \mathbb P^{n-1}(\mathbb{F}_q)\]
such that it preserves adjacency. Therefore,
 it must respect: \[\langle M\rangle\sim \langle v\rangle\Leftrightarrow
 \varphi(\langle M\rangle)\sim \varphi(\langle v\rangle)\quad \text{for all }
\langle M\rangle\in \mathcal{E}^{n-1}(\mathbb{F}_q )\,
 ,\ \langle v\rangle\in \mathbb{P}^{n-1}(\mathbb{F}_q).\]

 The following Lemma shows that every graph automorphism of $\mathbf{SIG}(\mathbb{V}_0)$
  preserves both of the parts of vertices of this graph. Indeed, every graph automorphism
  of $\mathbf{SIG}(\mathbb{V}_0)$ induces a permutation on the vertices of each part.
 \begin{lemma}\label{part to part} For every $\varphi \in Aut (\mathbf{SIG}(\mathbb{V}_0))$ we have
  $\varphi(\mathcal{E}^{n-1}(\mathbb{F}_q ))=\mathcal{E}^{n-1}(\mathbb{F}_q ) $ and
$\varphi (\mathbb P^{n-1}(\mathbb{F}_q))=\mathbb {P}^{n-1}(\mathbb{F}_q) $.
\end{lemma}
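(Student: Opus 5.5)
Since $\mathbf{SIG}(\mathbb V_0)$ is connected (Theorem \ref{diam}) and bipartite, any automorphism $\varphi$ either preserves both parts or swaps them. So it suffices to rule out a swap, and I will do this with one invariant.

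A swap would require $|\mathcal{E}^{n-1}(\mathbb F_q)|=|\mathbb P^{n-1}(\mathbb F_q)|$. Connectivity is what makes this the only alternative: if $\varphi$ sends some point $\langle v\rangle$ into $\mathcal E^{n-1}(\mathbb F_q)$, then every vertex at even distance from $\langle v\rangle$ goes to the same side as $\varphi(\langle v\rangle)$. Distances are preserved, so all of $\mathbb P^{n-1}(\mathbb F_q)$ maps into $\mathcal E^{n-1}(\mathbb F_q)$, and all matrix vertices map into $\mathbb P^{n-1}(\mathbb F_q)$. Because $\varphi$ is a bijection, the two parts must then have equal size.

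The counting argument is quick. By Lemma \ref{degree1}, for each point $\langle v\rangle$ there is a matrix vertex whose neighbourhood is exactly $\{\langle v\rangle\}$. These matrix vertices are pairwise distinct, and none of them is $\langle I_n\rangle$, whose degree is $\frac{q^n-1}{q-1}\ge 3$. Hence
\[
|\mathcal E^{n-1}(\mathbb F_q)|\ \ge\ |\mathbb P^{n-1}(\mathbb F_q)|+1 ,
\]
so the parts have different sizes and a swap is impossible. Alternatively, one can count the matrix vertices directly: Theorem \ref{count}(1) gives at least $\frac{q^{n^2-n+1}-1}{q-1}$ of them adjacent to a single point, which is far more than $\frac{q^n-1}{q-1}$.

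As a backup invariant that avoids the parity argument, one can use degrees. By Proposition \ref{scalarmatrix} and Theorem \ref{regular}, every point has degree $\frac{q^{n^2-n+1}-1}{q-1}$, while every matrix vertex has degree at most $\frac{q^n-1}{q-1}$, which is strictly smaller since $n\ge2$. Automorphisms preserve degree, so points go to points and matrix vertices go to matrix vertices. This is the cleanest route and I would present it as the main proof, with the cardinality argument as a remark.

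There is no real obstacle. The only care needed is checking the strict inequality $\frac{q^n-1}{q-1}<\frac{q^{n^2-n+1}-1}{q-1}$, which holds because $n^2-n+1>n$ for $n\ge2$ and was already noted in Theorem \ref{regular}(3). Once $\varphi(\mathbb P^{n-1}(\mathbb F_q))\subseteq\mathbb P^{n-1}(\mathbb F_q)$ holds for all $\varphi$, applying the same inclusion to $\varphi^{-1}$ gives equality. The complementary equality $\varphi(\mathcal E^{n-1}(\mathbb F_q))=\mathcal E^{n-1}(\mathbb F_q)$ then follows because $\varphi$ is a bijection of the vertex set.
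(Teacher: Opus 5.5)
Your proposal is correct, and your main argument (every point has degree $\frac{q^{n^2-n+1}-1}{q-1}$, every matrix vertex has degree at most $\frac{q^n-1}{q-1}$, the inequality is strict for $n\geq 2$, and automorphisms preserve degree) is exactly the paper's proof via Theorem~\ref{regular}. Your alternative remark, which combines connectivity of the bipartite graph with $|\mathcal{E}^{n-1}(\mathbb{F}_q)|>|\mathbb{P}^{n-1}(\mathbb{F}_q)|$, is also valid but not needed.
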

\begin{proof}
Using Theorem \ref{regular} for every $M \in E_n(\mathbb{F}_q)$  and every $0\neq v\in \mathbb{F}_q^n$
we have $deg(\langle M\rangle)< deg(\langle v\rangle)$. So
  $\varphi(\mathcal{E}^{n-1}(\mathbb{F}_q ))=\mathcal{E}^{n-1}(\mathbb{F}_q)$ and
$\varphi (\mathbb P^{n-1}(\mathbb{F}))=\mathbb P^{n-1}(\mathbb{F})$ as every graph automorphism preserves
 the degrees of vertices.
\end{proof}
\begin{remark}\label{identity} For  $0\neq r\in \mathbb{F}_q$ consider the map:
\[\rho_r:\mathcal{E}^{n-1}(\mathbb{F}_q)\cup \mathbb P^{n-1}(\mathbb{F}_q)\longrightarrow
\mathcal{E}^{n-1}(\mathbb{F}_q)\cup \mathbb P^{n-1}(\mathbb{F}_q)\] with
 $\rho_r(\langle M\rangle)=\langle rM\rangle=\langle M\rangle$ and
  $\rho_r(\langle v\rangle)=\langle rv\rangle =\langle v \rangle$ for every
$\langle M\rangle\in \mathcal{E}^{n-1}(\mathbb{F}_q )$ and
$\langle v\rangle\in \mathbb{P}^{n-1}(\mathbb{F}_q)$. So $\rho_r$
acts as identity graph automorphism of $\mathbf{SIG}(\mathbb{V}_0)$.
\end{remark}
\begin{lemma}\label{a5} Let $P\in GL_n(\mathbb{F}_q)$  and  let $\sigma$ be a field automorphism of
 $\mathbb{F}_q$.  Consider the map:
\[[P,\sigma]:\mathcal{E}^{n-1}(\mathbb{F}_q )\cup \mathbb P^{n-1}(\mathbb{F}_q)\longrightarrow
\mathcal{E}^{n-1}(\mathbb{F}_q)\cup \mathbb P^{n-1}(\mathbb{F}_q)\] with
 $[P,\sigma](\langle M\rangle)=\langle PM^\sigma P^{-1}\rangle$ and
  $[P,\sigma](\langle v\rangle)=\langle Pv^\sigma\rangle$ for every
$\langle M\rangle\in \mathcal{E}^{n-1}(\mathbb{F}_q )$ and
 $\langle v\rangle\in \mathbb{P}^{n-1}(\mathbb{F}_q)$, where $M^\sigma= (\sigma(m_{ij}))$ and
 $v^\sigma= (\sigma(v_i))$; that is, $\sigma$ acts entrywise.  Then $[P,\sigma]$
 is a graph automorphism of $\mathbf{SIG}(\mathbb{V}_0)$.
\end{lemma}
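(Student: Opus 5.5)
To prove Lemma~\ref{a5} I would verify three things in turn: $[P,\sigma]$ maps each part into itself and is well defined on projective classes; it is a bijection; and it preserves adjacency in both directions. The key algebraic identities are the ones that make entrywise application of a field automorphism compatible with matrix operations: $(AB)^\sigma=A^\sigma B^\sigma$, $(A+B)^\sigma=A^\sigma+B^\sigma$, $(cA)^\sigma=\sigma(c)A^\sigma$ for $c\in\mathbb F_q$, and likewise for matrix--vector products. These hold because $\sigma$ is a ring automorphism and every entry of a product or sum is a polynomial expression in the entries.

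\emph{Well-definedness.} If $\langle M\rangle=\langle N\rangle$ with $N=\alpha M$ and $\alpha\neq 0$, then
\[
PN^\sigma P^{-1}=\sigma(\alpha)\,PM^\sigma P^{-1}.
\]
Since $\sigma(\alpha)\neq0$, the two images span the same line; similarly $\langle P(\alpha v)^\sigma\rangle=\langle Pv^\sigma\rangle$. The image of a nonzero matrix is nonzero, because $\sigma$ is injective on entries and $P$ is invertible. Next, if $Mv=\lambda v$ with $v\neq0$, then applying $\sigma$ gives $M^\sigma v^\sigma=\sigma(\lambda)v^\sigma$. Hence
\[
(PM^\sigma P^{-1})(Pv^\sigma)=\sigma(\lambda)\,Pv^\sigma .
\]
So $PM^\sigma P^{-1}\in E_n(\mathbb F_q)$, and matrix vertices go to matrix vertices. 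Point vertices clearly go to point vertices, since $Pv^\sigma\neq0$.

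\emph{Bijectivity.} I would exhibit the inverse explicitly as $[(P^{-1})^{\sigma^{-1}},\sigma^{-1}]$, or simply check that the composite in either order is the identity using the identities above. For a point, for example,
\[
(P^{-1})^{\sigma^{-1}}\bigl(Pv^\sigma\bigr)^{\sigma^{-1}}=(P^{-1})^{\sigma^{-1}}P^{\sigma^{-1}}v=\bigl(P^{-1}P\bigr)^{\sigma^{-1}}v=v,
\]
and the matrix case is analogous. Alternatively, since the vertex set is finite, injectivity suffices. Injectivity follows from the fact that conjugation by $P$ and entrywise $\sigma$ are each injective and respect scalar multiples, by the computation in the well-definedness step.

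\emph{Adjacency.} The forward direction is exactly the eigen-equation computation above. For the converse, suppose $PM^\sigma P^{-1}$ has eigenvector $Pv^\sigma$ with eigenvalue $\mu$. Multiplying on the left by $P^{-1}$ gives $M^\sigma v^\sigma=\mu v^\sigma$. Applying $\sigma^{-1}$ entrywise then gives $Mv=\sigma^{-1}(\mu)v$, so $\langle M\rangle\sim\langle v\rangle$. Alternatively, the converse is the forward direction applied to the inverse map. In effect this is Lemma~\ref{changeB} combined with the field-automorphism twist.

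I expect no real obstacle here. The only point needing care is the well-definedness on one-dimensional subspaces, namely the semilinearity $(\alpha M)^\sigma=\sigma(\alpha)M^\sigma$, together with stating the multiplicativity of entrywise $\sigma$ cleanly. Everything else is routine.
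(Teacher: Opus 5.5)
Your proposal is correct and follows the same route as the paper, whose proof is just the eigen-equation computation $(PM^\sigma P^{-1})(Pv^\sigma)=P(Mv)^\sigma=\sigma(\lambda)Pv^\sigma$ read in both directions. Your version is more complete: the paper never checks that $[P,\sigma]$ is well defined on lines, lands in $\mathcal{E}^{n-1}(\mathbb{F}_q)$, or is bijective, and its span-based step $\langle\sigma(\lambda)Pv^\sigma\rangle=\langle Pv^\sigma\rangle$ silently fails when $\lambda=0$, which your direct use of the eigen-equation avoids.
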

\begin{proof}If  $\langle M\rangle\sim \langle v\rangle$ then
 for a $\lambda \in \mathbb{F}_q$ we have
$ Mv=\lambda v$. So
 $\langle M\rangle\sim \langle v\rangle \Leftrightarrow \langle Mv\rangle=\langle\lambda v\rangle
\Leftrightarrow \langle PM^\sigma P^{-1} (Pv^\sigma )\rangle= \langle PM^\sigma v^\sigma\rangle=
\langle P(Mv)^\sigma\rangle=
\langle P(\lambda v)^\sigma\rangle=\langle\sigma(\lambda) Pv^\sigma\rangle= \langle Pv^\sigma\rangle$. Hence,
$\langle M\rangle\sim \langle v\rangle\Leftrightarrow [P,\sigma](\langle M\rangle)\sim
 [P,\sigma](\langle v\rangle)$
and this proves the Lemma.
\end{proof}
\begin{corollary}\label{fieldauto}  Let $\sigma \in Aut(\mathbb{F}_q)$ be  an arbitrary element in the group
of field automorphisms of $\mathbb{F}_q$. Consider the map:
\[\bar{\sigma}:\mathcal{E}^{n-1}(\mathbb{F}_q)\cup
 \mathbb P^{n-1}(\mathbb{F}_q)\longrightarrow \mathcal{E}^{n-1}(\mathbb{F}_q)\cup
  \mathbb P^{n-1}(\mathbb{F}_q)\] with $\bar{\sigma}(\langle M\rangle)=\langle M^\sigma\rangle$
  and $\bar{\sigma}(\langle v\rangle)=\langle v^\sigma\rangle$ for every
$M=(m_{ij})\in E_n(\mathbb{F}_q)$ and non-zero $v=(v_i)\in \mathbb{F}_q^n$,
 where $M^{\sigma}=(\sigma(m_{ij}))$ and $v^{\sigma}=(\sigma(v_i))$. Then $\bar{\sigma}$
 is a graph automorphism of $\mathbf{SIG}(\mathbb{V}_0)$.
\end{corollary}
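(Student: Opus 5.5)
The plan is to realize $\bar{\sigma}$ as the special case $[I_n,\sigma]$ of the map in Lemma~\ref{a5}. Taking $P=I_n\in GL_n(\mathbb{F}_q)$ there, we have $[I_n,\sigma](\langle M\rangle)=\langle I_nM^\sigma I_n^{-1}\rangle=\langle M^\sigma\rangle$ and $[I_n,\sigma](\langle v\rangle)=\langle v^\sigma\rangle$. Hence $\bar{\sigma}=[I_n,\sigma]$ as maps on $\mathcal{E}^{n-1}(\mathbb{F}_q)\cup\mathbb{P}^{n-1}(\mathbb{F}_q)$, and Lemma~\ref{a5} gives the conclusion directly.

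To make the argument self-contained, I will also check that $\bar{\sigma}$ is a well-defined bijection of the vertex set; this is tacitly needed in Lemma~\ref{a5}.

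\emph{Well-definedness.} Replacing $M$ by $\alpha M$ gives $(\alpha M)^\sigma=\sigma(\alpha)M^\sigma$, and $\sigma(\alpha)\neq0$. So $\langle M^\sigma\rangle$ depends only on $\langle M\rangle$, and similarly $\langle v^\sigma\rangle$ depends only on $\langle v\rangle$.

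\emph{Image stays in the vertex set.} Suppose $Mv=\lambda v$ with $v\neq0$. Since $\sigma$ is a ring automorphism acting entrywise, $M^\sigma v^\sigma=(Mv)^\sigma=\sigma(\lambda)v^\sigma$. Also $v^\sigma\neq0$ and $M^\sigma\neq0$. Hence $M^\sigma\in E_n(\mathbb{F}_q)$.

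\emph{Bijectivity.} Applying $\overline{\sigma^{-1}}$ gives a two-sided inverse, so $\bar{\sigma}$ is a bijection. It preserves the two parts by construction.

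\emph{Adjacency.} The computation above shows that $\langle M\rangle\sim\langle v\rangle$ implies $\bar\sigma(\langle M\rangle)\sim\bar\sigma(\langle v\rangle)$. Applying the same implication to $\sigma^{-1}$ gives the converse.

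There is no real obstacle here. The only point needing care is the identity $(Mv)^\sigma=M^\sigma v^\sigma$, which holds because $\sigma$ is additive and multiplicative on the entries of the product.
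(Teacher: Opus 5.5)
Your proposal is correct and follows the paper's proof exactly: the paper likewise sets $P=I_n$ in Lemma~\ref{a5}. Your additional checks of well-definedness, that $M^\sigma\in E_n(\mathbb{F}_q)$, bijectivity via $\overline{\sigma^{-1}}$, and the converse adjacency via $\sigma^{-1}$ are all valid, and they make explicit what the paper's proof of Lemma~\ref{a5} leaves tacit.
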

\begin{proof} Let $P=I_n$ in Lemma \ref{a5}.
\end{proof}
Indeed, $\bar{\sigma}$ in Corollary \ref{fieldauto} is precisely $[I_n, \sigma]$ as defined in Lemma \ref{a5}.
 We call the graph automorphism in Corollary \ref{fieldauto},
 the \emph{graph automorphism induced by the field automorphism $\sigma$.}\par
Let \(V\) be a finite-dimensional vector space over a field $F$.
A bijection \[\varphi:\mathbb{P}(V)\longrightarrow\mathbb{P}(V)\]
is said to \emph{preserve collinearity} (i.e., $\varphi$ is a
\emph{collineation}; see \cite{Taylor1992}) if, for every three distinct projective points
(one-dimensional subspaces) $L_1,L_2,L_3\in\mathbb{P}(V),$
we have
\[
L_3\subseteq L_1+L_2
\quad\Longleftrightarrow\quad
\varphi(L_3)\subseteq
\varphi(L_1)+\varphi(L_2).
\]
\begin{theorem}\label{collinearity}
Let $\varphi$ be a graph automorphism of
$\mathbf{SIG}(\mathbb{V}_0)$. Then the restriction
$
\varphi|_{\mathbb{P}^{n-1}(\mathbb{F}_q)}
$
of $\varphi$ to the part $\mathbb{P}^{n-1}(\mathbb{F}_q)$ is a
collineation.
\end{theorem}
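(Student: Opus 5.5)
The plan is to characterize collinearity of three points purely in terms of the graph, namely by the size of their common neighbourhood, and then use that automorphisms preserve neighbourhoods. By Lemma~\ref{part to part}, $\varphi$ maps $\mathbb{P}^{n-1}(\mathbb{F}_q)$ bijectively onto itself and $\mathcal{E}^{n-1}(\mathbb{F}_q)$ onto itself. Since $\varphi$ preserves adjacency, for every point $L$ we have $\varphi(N(L))=N(\varphi(L))$. Hence for any three distinct points $L_1,L_2,L_3$,
\[
\bigl|N(L_1)\cap N(L_2)\cap N(L_3)\bigr|=\bigl|N(\varphi(L_1))\cap N(\varphi(L_2))\cap N(\varphi(L_3))\bigr|.
\]
So it suffices to show that this common-neighbourhood size takes one value on collinear triples and a different value on non-collinear triples.

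Write $L_i=\langle v_i\rangle$. The $v_i$ are pairwise linearly independent because the points are distinct. Every non-zero matrix having $v_1,v_2,v_3$ as eigenvectors lies in $E_n(\mathbb{F}_q)$, and matrix vertices are classes of $q-1$ non-zero scalar multiples. Therefore the common neighbourhood has size $(m-1)/(q-1)$, where $m$ is the number of matrices (including $0$) having all three $v_i$ as eigenvectors; the same passage from matrices to vertices is used in Theorem~\ref{regular}(2).
\begin{itemize}
\item If the triple is collinear, i.e.\ $L_3\subseteq L_1+L_2$, then the $v_i$ are dependent. Lemma~\ref{collinear} gives $m=q^{(n-1)^2}=q^{n^2-2n+1}$.
\item If the triple is not collinear, then $v_1,v_2,v_3$ are linearly independent, which forces $n\geq3$. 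Theorem~\ref{intersection} with $k=3$ gives $m=q^{n^2-3n+3}$.
\end{itemize}

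For $n\geq 3$ the two exponents differ by $(n^2-2n+1)-(n^2-3n+3)=n-2>0$. Hence the common neighbourhood sizes
\[
\frac{q^{n^2-2n+1}-1}{q-1}\quad\text{and}\quad\frac{q^{n^2-3n+3}-1}{q-1}
\]
are distinct, so a triple is collinear if and only if its common neighbourhood has the first size. Since $\varphi$ preserves this size and maps distinct points to distinct points, $L_3\subseteq L_1+L_2$ holds if and only if $\varphi(L_3)\subseteq\varphi(L_1)+\varphi(L_2)$.

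For $n=2$ every three distinct points of $\mathbb{P}^1(\mathbb{F}_q)$ lie in the whole two-dimensional space $L_1+L_2=\mathbb{F}_q^2$. Thus both sides of the collinearity equivalence always hold, and the statement is trivial.

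The only delicate step is the count in the non-collinear case: one must make sure that independence of the three vectors lets Theorem~\ref{intersection} apply with $k=3$, and that excluding the zero matrix gives the same shift in both cases, so that the comparison reduces to comparing the exponents.
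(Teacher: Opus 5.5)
Your proof is correct and follows the paper's argument: both compare the common-neighbourhood size of three distinct points, which is $\frac{q^{(n-1)^2}-1}{q-1}$ for collinear triples (Lemma~\ref{collinear}) and $\frac{q^{n^2-3n+3}-1}{q-1}$ for non-collinear triples (Theorem~\ref{intersection} with $k=3$), and both treat $n=2$ as trivial. The only difference is cosmetic: you obtain both directions of the equivalence at once from this invariant, whereas the paper proves one direction by contradiction and then applies the same argument to $\varphi^{-1}$.
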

\begin{proof} By  Lemma \ref{part to part},
 $\varphi(\mathbb{P}^{n-1}(\mathbb{F}_q))=\mathbb{P}^{n-1}(\mathbb{F}_q)$.
  Let $L_1=\langle v_1\rangle, L_2=\langle v_2\rangle $ and
  $L_3=\langle v_3\rangle$  be three distinct
   projective points in $\mathbb{P}^{n-1}(\mathbb{F}_q)$, such that $L_3\subseteq L_1+ L_2$. Then,
$v_1, v_2,  v_3$ are pairwise linearly independent but the vectors $v_1, v_2,  v_3$ are
 linearly dependent.
So using  Lemma \ref{collinear} the number of matrices $M\in M_n(\mathbb{F}_q)$
 which has all three $v_i$s as eigenvectors is $q^{(n-1)^2}$. Note that every graph
 automorphism preserves degrees and neighborhoods and hence:
\begin{center}
$\varphi ( N(L_1)\cap N(L_2)\cap N(L_3))
=
N(\varphi(L_1))
\cap
N(\varphi(L_2))
\cap
N(\varphi(L_3)).$
\end{center}
Consequently, $\frac{q^{(n-1)^2-1}}{q-1}=|N(L_1)\cap N(L_2)\cap N(L_3)|
=|\varphi( N(L_1)\cap N(L_2)\cap$$N(L_3))|=
|N(\varphi(L_1))
\cap
N(\varphi(L_2))
\cap
N(\varphi(L_3))|$.

 Since $\varphi$ is an automorphism, $\varphi(L_1)$,
$\varphi(L_2)$ and
$\varphi(L_3)$ are three distinct projective points in $\mathbb{P}^{n-1}(\mathbb{F}_q)$ and
  clearly for each pair $1\leq i\neq j\leq3$ we have $dim (\varphi(L_i)+\varphi(L_j))=2$.
   We claim that  $dim(\varphi(L_1)+\varphi(L_2)+\varphi(L_3))=2$ and hence
$\varphi(L_3)\subseteq \varphi(L_1)+ \varphi(L_2)$.  Suppose, to the contrary, that
 we have $dim(\varphi(L_1)+\varphi(L_2)+\varphi(L_3))=3$.
 Let $\varphi(L_1)=\langle v'_1\rangle$, $\varphi(L_2)=\langle v'_2\rangle$
  and $\varphi(L_3)=\langle v'_3\rangle$ and hence the 
  vectors $v'_1, v'_2, v'_3 $ are linearly independent. So using
   Theorem \ref{intersection}
   \[|N(\varphi(L_1))\cap N(\varphi(L_2))\cap N(\varphi(L_3))|=\frac{q^{n^2-3n+3}-1}{q-1}.\]
 But for $n\geq 3$ we have $\frac{q^{n^2-3n+3}-1}{q-1}<\frac{q^{(n-1)^2}-1}{q-1}$, a contradiction. So
 $\varphi(L_3)\subseteq \varphi(L_1)+\varphi(L_2)$.
 Since $\varphi^{-1}$
 is also a graph automorphism, applying the same argument to $\varphi^{-1}$
shows that $\varphi(L_3)\subseteq \varphi(L_1)+\varphi(L_2)\Rightarrow L_3\subseteq L_1+L_2$. Therefore,
 $\varphi|_{\mathbb{P}^{n-1}(\mathbb{F}_q)}$ is a collineation,  for $n\geq 3$.
 For $n=2$ since every three vectors in a two-dimensional vector space
  are linearly dependent, the collinearity  is preserved by $\varphi|_{\mathbb{P}^{n-1}(\mathbb{F}_q)}$
   and this completes the proof.
\end{proof}

 $GL_n(\mathbb{F}_q)$, the general linear group of degree $n$, acts on the vector space
 $\mathbb{V}_0= \mathbb{F}_q^n$  by $v \mapsto Av$,
 for every $v \in \mathbb{F}_q^n$  and $A\in GL_n(\mathbb{F}_q)$.
In projective geometry, vectors differing by a non-zero scalar are identified
(i.e, for every non-zero $ v\in \mathbb{V}_0$, $v$ and $\lambda v$ are identified,
 for all $0\neq\lambda\in\mathbb{F}_q$).
  The set of $n\times n$ scalar matrices on $\mathbb{F}_q $ is
   $\{\lambda I_n: 0\neq\lambda\in \mathbb{F}_q\}$ and it is  a normal subgroup
     of $GL_n(\mathbb{F}_q)$.
Scalar matrices act trivially on $\mathbb{P}^{n-1}(\mathbb{F}_q)$, so we consider
 the quotient group: $PGL_n(\mathbb{F}_q) = GL_n(\mathbb{F}_q) / \{\lambda I_n\}$
  which is called the \emph{projective general linear group of degree} $n$
  over the field $\mathbb{F}_q$.
An element $A$ of $PGL_n(\mathbb{F}_q)$ acts on $\mathbb{P}^{n-1}(\mathbb{F}_q)$
  by $\langle v \rangle \mapsto \langle Av \rangle $.\\

 For a field $F$, let $V$ and $W$ be two vector spaces on $F$ and
$\sigma$ a field automorphism of $F$.
In linear algebra, particularly in projective geometry,
a map $T:V\to W$ is said to be a $\sigma$-\emph{semilinear transformation}
or simply \emph{semilinear transformation} if
$T(u+v)=T(u)+T(v)$ and  $T(\lambda v)=\sigma(\lambda)T(v)$
for all $\lambda\in F$ and $v\in V$.
If $T$ is a bijection then $T$ is said to be a $\sigma$-\emph{semilinear isomorphism}
 or simply \emph{semilinear isomorphism}.
If $V=W$ and $T$ is a bijection then $T$ is said to be a $\sigma$-\emph{semilinear automorphism}
  or simply \emph{semilinear automorphism} on $V$. If $V=W$, $\sigma=id_F$ and $T$ is a bijection
  then $T$ is said to be a \emph{linear automorphism} on $V$. (see \cite{Hirshfeld}).

\begin{remark}\label{semilinautoA} Every $\sigma$-semilinear automorphism
$T:\mathbb{F}^n_q\rightarrow \mathbb{F}^n_q$
acts as $T(v)=A \sigma(v)$, for a matrix $A\in GL_n(\mathbb{F}_q)$. To
see this, let $\{e_1,\dots,e_n\}$ be the standard basis of $\mathbb{F}^n_q$. For
$v=a_1e_1+\cdots+a_ne_n$, we have
$T(v)=T(a_1e_1)+\cdots+T(a_ne_n).$
Using semilinearity,
$T(a_i e_i)=\sigma(a_i)T(e_i)$. So $T(v)=\sigma(a_1)T(e_1)+\cdots+\sigma(a_n)T(e_n)$.
Define the matrix $A\in M_n(\mathbb{F}_q)$ with the $i$th column  $T(e_i)$ for each $1\leq i\leq n.$
Then
$T(v)=Av^\sigma$,
where $\sigma$ acts on each coordinate of $v$.
Since $T$ is bijective, the vectors
$T(e_1),\dots,T(e_n)$ are linearly independent, hence
$A\in GL_n(\mathbb{F}_q)$. We denote this $\sigma$-semilinear automorphism by $[A,\sigma]$.
\end{remark}

The symbol $\rtimes$ denotes the \emph{semidirect product} of two groups.
It is a standard construction in group theory. Let $G$ and $H$ be groups. We write $G\rtimes H$
when $H$ acts on $G$ by automorphisms. The action is given by a homomorphism
$\theta:$$H\rightarrow  Aut(G)$.
The elements of $G\rtimes H$ are pairs $[g,h]$ and multiplication is
$[g_1,h_1][g_2,h_2]=[g_1\theta(h_1)(g_2),h_1h_2])$.
Thus the symbol $\rtimes$ indicates that the second factor acts on the first factor.
When the action homomorphism is relevant, to
indicate the homomorphism $\theta$, it is denoted by $G\underset{\theta}\rtimes H$
(for more information see \cite[Section 5.5]{DF}).\\
In the following example there is a semidirect product of groups which is used in our next results.
\begin{example} \label{examplesemidirect}
 $ PGL_n(\mathbb{F}_q)\underset{\theta} \rtimes Aut(\mathbb{F}_q)$ has a semidirect product
 structure of groups, where $\theta:Aut(\mathbb{F}_q)\rightarrow Aut(PGL_n(\mathbb{F}_q)) $
      with $\sigma\mapsto \theta(\sigma)$ such that  $\theta(\sigma)(M)= M^{\sigma}$
  for every $M=( m_{ij})\in PGL_n(\mathbb{F}_q)$ and $M^\sigma=(\sigma(m_{ij}))$.  Then
  $\theta$ is a group homomorphism and
   for  $[A,\sigma]$ and $[B,\tau]$  we have
  $[A,\sigma][B,\tau]= [A\theta(\sigma)(B), \sigma\tau]= [AB^\sigma, \sigma\tau]$
   is the needed binary operation on $ PGL_n(\mathbb{F}_q) \rtimes Aut(\mathbb{F}_q)$
    and hence $PGL_n(\mathbb{F}_q)\underset{\theta}\rtimes Aut(\mathbb{F}_q)$
    is a semidirect product of groups.
\end{example}
\begin{remark}An element $[A,\sigma]$ of $ PGL_n(\mathbb{F}_q) \rtimes Aut(\mathbb{F}_q)$ also
 called a \emph{semilinear projectivity}, similarly to the Lemma \ref{a5}, $[A,\sigma]$ acts
  on  two parts of the vertices of $\mathbf{SIG}(\mathbb{V}_0)$, in particular on its
   projective part $\mathbb{P}^{n-1}(\mathbb{F}_q)$.
   Indeed a semilinear projectivity map on projective spaces
  is the projective version of a semilinear map on the vector spaces.
  \end{remark}
\begin{theorem}\label{semilinear}
Consider the group
$PGL_n(\mathbb{F}_q)\rtimes Aut(\mathbb{F}_q),$
whose elements are denoted by $[A,\sigma]$, where
$A\in PGL_n(\mathbb{F}_q)$ and
$\sigma\in Aut(\mathbb{F}_q)$. Let $A\in GL_n(\mathbb{F}_q)$
be a representative of the corresponding element of
$PGL_n(\mathbb{F}_q)$. The action of $[A,\sigma]$ on the projective
part $\mathbb{P}^{n-1}(\mathbb{F}_q)$ and on the matrix vertices part
$\mathcal{E}^{n-1}(\mathbb{F}_q)$ is defined by
$\langle v\rangle\longmapsto \langle Av^\sigma\rangle$
and
$\langle M\rangle\longmapsto
\langle AM^\sigma A^{-1}\rangle,$
respectively. These maps define a graph automorphism of
$\mathbf{SIG}(\mathbb{V}_0)$ for every $[A,\sigma]$. Consequently,
$Aut(\mathbf{SIG}(\mathbb{V}_0))$ contains a subgroup
isomorphic to
 $P\Gamma L_n(q)=PGL_n(\mathbb{F}_q)\rtimes Aut(\mathbb{F}_q).$
\end{theorem}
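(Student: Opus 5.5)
The plan is to build a group homomorphism $\Psi: PGL_n(\mathbb{F}_q)\rtimes Aut(\mathbb{F}_q)\to Aut(\mathbf{SIG}(\mathbb{V}_0))$ with $\Psi([A,\sigma])$ equal to the map in the statement, and then show $\Psi$ is injective. Its image is then the required subgroup isomorphic to $P\Gamma L_n(q)$. There are four steps: well-definedness, the automorphism property, the homomorphism property, and faithfulness.

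\emph{Well-definedness.} Replacing the representative $A$ by $cA$ with $0\neq c\in\mathbb{F}_q$ changes $\langle Av^\sigma\rangle$ into $\langle cAv^\sigma\rangle=\langle Av^\sigma\rangle$. The conjugate $AM^\sigma A^{-1}$ does not change at all. Replacing $M$ by $\alpha M$ gives $\sigma(\alpha)AM^\sigma A^{-1}$, and replacing $v$ by $\alpha v$ gives $\sigma(\alpha)Av^\sigma$; both span the same lines as before. Next, the image of a matrix vertex must lie in $\mathcal{E}^{n-1}(\mathbb{F}_q)$. Since $\sigma$ is a field automorphism, $M\neq 0$ gives $M^\sigma\neq0$. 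Applying $\sigma$ to $Mv=\lambda v$ gives $M^\sigma v^\sigma=\sigma(\lambda)v^\sigma$, so $AM^\sigma A^{-1}$ has the eigenvector $Av^\sigma$. Together with Remarks \ref{rem2} and \ref{rem3}, this shows the map sends each part into itself.

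\emph{Automorphism property.} Adjacency is preserved in both directions by exactly the computation in Lemma \ref{a5}, which I will simply invoke. Bijectivity will follow from the homomorphism step, because $[A,\sigma]$ has the inverse $[(A^{-1})^{\sigma^{-1}},\sigma^{-1}]$ in the semidirect product of Example \ref{examplesemidirect}.

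\emph{Homomorphism property.} I will check directly that composition matches the product $[A,\sigma][B,\tau]=[AB^\sigma,\sigma\tau]$. On points,
\[
\langle A(Bv^\tau)^\sigma\rangle=\langle AB^\sigma v^{\sigma\tau}\rangle .
\]
On matrix vertices, using $(B^{-1})^\sigma=(B^\sigma)^{-1}$ and $(XY)^\sigma=X^\sigma Y^\sigma$, we get
\[
A(BM^\tau B^{-1})^\sigma A^{-1}=(AB^\sigma)M^{\sigma\tau}(AB^\sigma)^{-1}.
\]
Hence $\Psi$ is a homomorphism, and in particular each $\Psi([A,\sigma])$ is a bijection.

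\emph{Faithfulness.} I expect this to be the only real content. Suppose $\Psi([A,\sigma])$ is the identity. It then fixes every point of $\mathbb{P}^{n-1}(\mathbb{F}_q)$.
\begin{itemize}
\item Since $e_i^\sigma=e_i$, fixing each $\langle e_i\rangle$ gives $Ae_i=\lambda_ie_i$, so $A$ is diagonal.
\item Fixing $\langle e_1+\cdots+e_n\rangle$ forces $\lambda_1=\cdots=\lambda_n$, as in Proposition \ref{scalarmatrix} or Theorem \ref{thm:scalarcriterion}. Thus $A$ is scalar and trivial in $PGL_n(\mathbb{F}_q)$.
\item Now fix $\langle e_1+ae_2\rangle$ for an arbitrary $a\in\mathbb{F}_q$; this uses $n\ge2$. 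It is sent to $\langle e_1+\sigma(a)e_2\rangle$, so $\sigma(a)=a$ for all $a$, and $\sigma=\mathrm{id}$.
\end{itemize}
Therefore $\ker\Psi$ is trivial and $\Psi$ embeds $P\Gamma L_n(q)$ into $Aut(\mathbf{SIG}(\mathbb{V}_0))$.

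The main care point is this last step. Scalar classes in $PGL_n$ and nontrivial field automorphisms must not collapse to the identity graph automorphism, and the argument above rules both out, even when $n=2$. By comparison, the identity $(B^{-1})^\sigma=(B^\sigma)^{-1}$ used in the homomorphism step is routine.
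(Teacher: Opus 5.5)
Your proposal is correct and follows essentially the same route as the paper: adjacency via Lemma~\ref{a5}, the homomorphism property via the same computation against $[AB^\sigma,\sigma\tau]$, and faithfulness by fixing the points $\langle e_i\rangle$, then a point forcing all the $\lambda_i$ to be equal (the paper uses the points $\langle e_i+e_j\rangle$ where you use $\langle e_1+\cdots+e_n\rangle$), then $\langle e_1+te_2\rangle$. Your explicit checks that the maps do not depend on the chosen representatives and are bijective (via the inverse $[(A^{-1})^{\sigma^{-1}},\sigma^{-1}]$) are details the paper leaves implicit, and they are sound.
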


\begin{proof}
Let $[A,\sigma]\in
PGL_n(\mathbb{F}_q)\rtimes Aut(\mathbb{F}_q)$, where
$A\in GL_n(\mathbb{F}_q)$ is a representative of the corresponding
element of $PGL_n(\mathbb{F}_q)$. Then, similarly to the proof of Lemma \ref{a5}, $[A, \sigma]$
 is a graph automorphism of $\mathbf{SIG}(\mathbb{V}_0)$.
 Now, we show that the correspondence:
\[j: P\Gamma L_n(q)\rightarrow Aut(\mathbf{SIG}(\mathbb{V}_0))\]
\[ \text{ with   } j([A,\sigma])(\langle M\rangle)=\langle AM^\sigma A^{-1}\rangle,\;
j([A,\sigma])(\langle v\rangle)=\langle Av^\sigma\rangle
\]
gives a group embedding of
$P\Gamma L_n(q)$ into
$Aut(\mathbf{SIG}(\mathbb{V}_0))$. Let $[A, \sigma], [B, \alpha]$ be two elements of
 $PGL_n(\mathbb{F}_q) \rtimes Aut(\mathbb{F}_q)$, then as we saw in Example \ref{examplesemidirect},
  $j([A, \sigma][B, \alpha])= j( [AB^\sigma,\sigma\alpha])$. So for every vertex
   $\langle v\rangle\in \mathbb{P}^{n-1}(\mathbb{F}_q)$  and
    $\langle M\rangle \in \mathcal{E}^{n-1}(\mathbb{F}_q)$  we have:
 \[ j([A, \sigma][B, \alpha])(\langle v\rangle)= [AB^\sigma, \sigma\alpha](\langle v\rangle)=
  \langle AB^\sigma v^{\sigma\alpha}\rangle\] \[\text{and }\]
   $$  j([A, \sigma])j([B, \alpha])(\langle v\rangle)=
    j([A ,\sigma])[B,\alpha](\langle v\rangle)=
  j([A ,\sigma])\langle Bv^{\alpha}\rangle =\langle
    AB^\sigma v^{\sigma\alpha}\rangle $$
    Also we have:
  $ j([A, \sigma][B, \alpha])(\langle M\rangle)= [AB^\sigma, \sigma\alpha](\langle M\rangle)=
  \langle AB^\sigma M^{\sigma\alpha} (AB^\sigma)^{-1} \rangle$
  \[ \text{and}\]
 \[j([A, \sigma])j([B, \alpha])(\langle M\rangle)=
     j([A ,\sigma])\langle BM^{\alpha} B^{-1}\rangle = \langle
    A (B M^{\alpha}B^{-1})^{\sigma} A^{-1}\rangle
    =\]
    \[ \langle
    AB^\sigma M^{\sigma\alpha}(B^{-1})^\sigma A^{-1}\rangle
    =\langle AB^\sigma M^{\sigma\alpha}(B^\sigma)^{-1} A^{-1}\rangle=
    \langle AB^\sigma M^{\sigma\alpha}(AB^\sigma)^{-1}\rangle.\]
    Therefore, $j([A, \sigma][B, \alpha])=j([A, \sigma])j([B, \alpha])$
    and hence $j$ is a group homomorphism.
 To see that $j$ is injective, suppose that
$[A,\sigma]$ induces the identity automorphism of
$\mathbf{SIG}(\mathbb{V}_0)$. In particular, its restriction to the
projective part is the identity. Thus
$\langle Av^\sigma\rangle=\langle v\rangle$
for every nonzero $v\in\mathbb{F}_q^n$.
Let $\{e_1,\ldots,e_n\}$ be the standard basis of
$\mathbb{F}_q^n$.
Since $\sigma$ is a field automorphism of $\mathbb{F}_q$ we have
 $\sigma(0)=0$ and $\sigma(1)=1$ and hence $e_i^\sigma=e_i$ for every $i$. So from
 $\langle Av^\sigma\rangle=\langle v\rangle$
 we have
$\langle Ae_i\rangle=\langle e_i\rangle$
for every $i$. Therefore, there exist $a_1,\ldots,a_n\in\mathbb{F}_q\setminus \{0\}$
such that $Ae_i=a_i e_i$, for every $i$.
For $i\neq j$, consider the projective point
$\langle e_i+e_j\rangle$. Then, similarly we have $(e_i+e_j)^\sigma= e_i+e_j$.
Since the induced projective map is the
identity, we have
$\langle Ae_i+Ae_j\rangle
=
\langle e_i+e_j\rangle.
$
Hence
$
\langle a_i e_i+a_j e_j\rangle
=
\langle e_i+e_j\rangle,
$
which implies
$a_i=a_j.$
Therefore, there exists a non-zero $a\in\mathbb{F}_q$ such that
$A=aI_n.$
Now, for any $t\in\mathbb{F}_q$, consider the projective point
$\langle e_1+t e_2\rangle.$
Again, the induced projective map is the identity, and therefore
$
\langle Ae_1+\sigma(t)Ae_2\rangle
=
\langle e_1+t e_2\rangle.
$
Since $A=aI_n$, this gives
$
\langle e_1+\sigma(t)e_2\rangle
=
\langle e_1+t e_2\rangle.
$
Because the coefficient of $e_1$ is $1$ on both sides, it follows that
$\sigma(t)=t$
for every $t\in\mathbb{F}_q$. Hence
$\sigma=id_{\mathbb{F}_q}.$
Consequently,
$[A,\sigma]=[aI_n,id_{\mathbb{F}_q}]
=[I_n, id_{\mathbb{F}_q}]$
in $PGL_n(\mathbb{F}_q)\rtimes
Aut(\mathbb{F}_q)$. Thus $j$ is
injective. Therefore,
$P\Gamma L_n(q)$ is isomorphic to a subgroup of
$Aut(\mathbf{SIG}(\mathbb{V}_0)).$
\end{proof}
\begin{theorem}\label{FTPG}\emph{(}\textbf{\emph{Fundamental Theorem of Projective Geometry
\cite{Hirshfeld}}} \emph{)}\\
 \,\, Let $\mathbb{P}(V)$ and $\mathbb{P}(W)$ be projective spaces of dimension at least $2$,
arising from vector spaces $V$ and $W$ over fields $F$ and $K$, respectively.
Every projective collineation (a bijection mapping lines to lines) is
induced by a semilinear isomorphism $T:V\to W$. That is, if $f:\mathbb{P}(V)\to \mathbb{P}(W)$
is a collineation, then there exists a semilinear bijection $T$ such that
$f(\langle v\rangle)=\langle T(v)\rangle$
for every non-zero vector $v\in V$.
\end{theorem}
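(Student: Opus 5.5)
This is the classical Fundamental Theorem of Projective Geometry, cited from \cite{Hirshfeld}. In the paper it is only invoked. If a proof were required, I would follow the standard Artin-style coordinatization argument: show that the collineation $f$ preserves projective subspaces of every dimension, fix a frame, and recover a field isomorphism $\sigma:F\to K$ from the action on one line.

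\textbf{Step 1: subspaces and dimension.} Since $f$ maps lines bijectively onto lines, it maps each projective subspace (a set closed under joining lines) onto a projective subspace. Because $f^{-1}$ is also a collineation, $f$ preserves inclusion and spans. Hence $f$ preserves dimension, so $\dim V=\dim W=n\ge 3$.

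\textbf{Step 2: normalize a frame.} Choose a basis $e_1,\dots,e_n$ of $V$ and set $u=e_1+\cdots+e_n$. Pick $w_i$ with $f\langle e_i\rangle=\langle w_i\rangle$. Since $f\langle u\rangle$ lies in the span of the $f\langle e_i\rangle$ but in no proper sub-span, the $w_i$ can be rescaled so that $f\langle u\rangle=\langle w_1+\cdots+w_n\rangle$.

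\textbf{Step 3: define $\sigma$.} For $t\in F$, the point $\langle e_1+te_2\rangle$ lies on the line $\langle e_1,e_2\rangle$ and differs from $\langle e_2\rangle$. So its image is $\langle w_1+\sigma(t)w_2\rangle$ for a unique $\sigma(t)\in K$, and $\sigma$ is a bijection $F\to K$.

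\textbf{Step 4: $\sigma$ is a field isomorphism.} This is the main obstacle. Addition and multiplication in $F$ are expressed by incidence constructions in the plane $\langle e_1,e_2,e_3\rangle$: von Staudt-type constructions using auxiliary points such as $\langle e_1+e_3\rangle$ and $\langle e_2+e_3\rangle$ and intersections of lines. Since $f$ preserves all these incidences and sends the reference frame to the normalized reference frame, it follows that $\sigma(s+t)=\sigma(s)+\sigma(t)$ and $\sigma(st)=\sigma(s)\sigma(t)$. The hypothesis of dimension at least $2$ is used exactly here, because the constructions need a third point off the line. I would first verify these identities carefully in coordinates for one plane.

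\textbf{Step 5: extend to all coordinates and conclude.} Using the same constructions, show that $f\langle e_1+t_2e_2+\cdots+t_ne_n\rangle=\langle w_1+\sum\sigma(t_i)w_i\rangle$. Compatibility of the scalings between different pairs of axes follows from the normalization of $u$ in Step 2. Points with first coordinate zero lie in the hyperplane $\langle e_2,\dots,e_n\rangle$ and are handled by induction on dimension, or by exchanging the roles of the basis vectors.

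Then define
\[T\Bigl(\sum a_ie_i\Bigr)=\sum\sigma(a_i)w_i.\]
This $T$ is a $\sigma$-semilinear bijection, and by Steps 3–5 it satisfies $f(\langle v\rangle)=\langle T(v)\rangle$ for all nonzero $v$.
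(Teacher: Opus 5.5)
The paper gives no proof of this statement. It is quoted from Hirschfeld and used only once, in Lemma~\ref{corn3}, with $F=K=\mathbb{F}_q$ and $n\ge 3$. There the collineation comes from Theorem~\ref{collinearity}, whose definition already makes the inverse collinearity-preserving. So there is no in-paper argument to compare with. What you outline is the standard Artin--von Staudt coordinatization proof, and the route is sound. Compared with the citation it gains self-containment. However, essentially all the content sits in Step~4, which you only announce, and a few links elsewhere are asserted rather than shown.

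Concretely, here is what needs filling in:
\begin{itemize}
\item \textbf{Step~1.} That $f^{-1}$ maps lines onto lines needs one line of proof. If $L'$ is a line through $f(p)\ne f(q)$, then $f(pq)$ is a line containing $f(p)$ and $f(q)$, so it equals $L'$.
\item \textbf{Steps~2--3.} Normalizing $u$ does not by itself give $\sigma(1)=1$ when $n\ge3$. You need $\langle e_1+e_2\rangle=\langle e_1,e_2\rangle\cap\langle u,e_3,\dots,e_n\rangle$ together with preservation of joins and meets.
\item \textbf{Same $\sigma$ on every axis.} This comes from perspectivities, not from the normalization alone. For example, $\langle e_1+te_3\rangle=\langle e_1,e_3\rangle\cap\langle e_1+te_2,\,e_2-e_3\rangle$, where $\langle e_2-e_3\rangle=\langle e_1+e_2,e_1+e_3\rangle\cap\langle e_2,e_3\rangle$ is pinned down by the frame.
\item \textbf{Step~4, chart.} Use the chart $\langle e_1+xe_2+ye_3\rangle\leftrightarrow(x,y)$ with line at infinity $\langle e_2,e_3\rangle$. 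The line joining $(0,1)$ and $(t,0)$ has direction $\langle te_2-e_3\rangle$.
\item \textbf{Step~4, addition.} The point $(s+t,0)$ is the meet of the $x$-axis with the line through $(s,1)$ in direction $\langle te_2-e_3\rangle$.
\item \textbf{Step~4, multiplication.} The point $(st,0)$ is the meet of the $x$-axis with the line through $(0,s)$ in that same direction.
\item \textbf{Step~4, conclusion.} Every auxiliary point is built from the frame and from points already known to transform by $\sigma$. This yields $\sigma(s+t)=\sigma(s)+\sigma(t)$ and $\sigma(st)=\sigma(s)\sigma(t)$.
\item \textbf{Step~5.} This needs no induction. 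You have $\langle e_1+\sum_{j\ge2}t_je_j\rangle=\bigcap_{j\ge2}\langle e_1+t_je_j,\ e_k:k\ne1,j\rangle$. A point $\langle\sum_{j\ge2}t_je_j\rangle$ is the meet of $\langle e_2,\dots,e_n\rangle$ with the line through $\langle e_1\rangle$ and $\langle e_1+\sum_{j\ge2}t_je_j\rangle$.
\item \textbf{Final identity.} Proving $f\langle v\rangle=\langle T(v)\rangle$ for $v=\sum a_ie_i$ with $a_1\ne0$ uses $\sigma(a_i/a_1)=\sigma(a_i)\sigma(a_1)^{-1}$. So multiplicativity is needed there as well, not only for the semilinearity of $T$.
\end{itemize}
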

\begin{remark}Note that in  Theorem \ref{FTPG}, $n$ is the dimension of projective space not the
 dimension of  vector space. So the vector spaces in the Theorem \ref{FTPG} have the dimension
 at least 3.
 \end{remark}
 \begin{lemma}\label{corn3}
Let $n\geq 3$. Then for every
$\varphi\in Aut(\mathbf{SIG}(\mathbb{V}_0))$,
there exists a semilinear automorphism
$
T:\mathbb{F}_q^n\longrightarrow\mathbb{F}_q^n
$ such that
$
\varphi(\langle v\rangle)=\langle T(v)\rangle
$ for every $\langle v\rangle\in\mathbb{P}^{n-1}(\mathbb{F}_q)$.
Equivalently, there exist $A\in GL_n(\mathbb{F}_q)$ and
$\sigma\in Aut(\mathbb{F}_q)$ such that
$
\varphi(\langle v\rangle)
=
\langle Av^\sigma\rangle
$ for every $\langle v\rangle\in\mathbb{P}^{n-1}(\mathbb{F}_q)$.
Thus the restriction of $\varphi$ to
$\mathbb{P}^{n-1}(\mathbb{F}_q)$ is an element of
$
P\Gamma L_n(q)
=
PGL_n(\mathbb{F}_q)\rtimes Aut(\mathbb{F}_q).
$ Moreover, the restriction map
$
\Phi: Aut(\mathbf{SIG}(\mathbb{V}_0))
\longrightarrow
P\Gamma L_n(q),$
with $\Phi(\varphi)
=\varphi|_{\mathbb{P}^{n-1}(\mathbb{F}_q)},
$ is a group epimorphism.
\end{lemma}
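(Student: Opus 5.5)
The plan is to combine Theorem \ref{collinearity} with the Fundamental Theorem of Projective Geometry (Theorem \ref{FTPG}), and then use Theorem \ref{semilinear} to get surjectivity.

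Fix $\varphi\in Aut(\mathbf{SIG}(\mathbb{V}_0))$ with $n\geq 3$.

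\textbf{Step 1: the restriction is a collineation.} By Lemma \ref{part to part}, $\varphi$ restricts to a bijection $f=\varphi|_{\mathbb{P}^{n-1}(\mathbb{F}_q)}$ of $\mathbb{P}^{n-1}(\mathbb{F}_q)$. By Theorem \ref{collinearity}, $f$ preserves collinearity of triples of distinct points in both directions. Hence $f$ maps each projective line $\mathbb{P}(\langle v_1,v_2\rangle)$ onto a projective line: the image of such a line consists of $f(L_1)$, $f(L_2)$ and points of $f(L_1)+f(L_2)$. Applying the same argument to $f^{-1}$ gives equality of the line sets. So $f$ is a bijection sending lines to lines.

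\textbf{Step 2: apply Theorem \ref{FTPG}.} Since $n\geq 3$, the projective dimension $n-1$ is at least $2$. Theorem \ref{FTPG} therefore gives a semilinear bijection $T:\mathbb{F}_q^n\to\mathbb{F}_q^n$ with $f(\langle v\rangle)=\langle T(v)\rangle$. By Remark \ref{semilinautoA}, $T=[A,\sigma]$ for some $A\in GL_n(\mathbb{F}_q)$ and $\sigma\in Aut(\mathbb{F}_q)$. Thus $f(\langle v\rangle)=\langle Av^\sigma\rangle$, and $f$ is the action of the class of $[A,\sigma]$ in $P\Gamma L_n(q)$. This class is uniquely determined by $f$, by the injectivity argument in the proof of Theorem \ref{semilinear}: an element acting trivially on the points is $[aI_n,id]$. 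So $\Phi$ is well defined as a map into $P\Gamma L_n(q)$, identified with its faithful image in $S_{\mathbb{P}^{n-1}(\mathbb{F}_q)}$.

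\textbf{Step 3: homomorphism and surjectivity.} $\Phi$ is a homomorphism because restriction to an invariant set respects composition: $(\varphi\psi)|_{\mathbb{P}}=\varphi|_{\mathbb{P}}\,\psi|_{\mathbb{P}}$. Faithfulness of the action lets us transfer this identity to $P\Gamma L_n(q)$. For surjectivity, take any $[A,\sigma]\in P\Gamma L_n(q)$. Theorem \ref{semilinear} provides the automorphism $j([A,\sigma])$, whose restriction to $\mathbb{P}^{n-1}(\mathbb{F}_q)$ is $\langle v\rangle\mapsto\langle Av^\sigma\rangle$, i.e. $\Phi(j([A,\sigma]))=[A,\sigma]$.

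The main obstacle is Step 1, namely bridging the triple-wise collinearity statement of Theorem \ref{collinearity} to the hypothesis of Theorem \ref{FTPG}, which is stated for bijections mapping lines to lines. The point is to show surjectivity onto the target line. I would handle this by a counting argument: every line has exactly $q+1$ points, and $f$ is injective, so the image of a line lies inside a line of the same size and must fill it. Once that is in place, the rest is formal.
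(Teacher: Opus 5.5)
Your proposal is correct and follows the paper's proof essentially step for step: Theorem~\ref{collinearity} together with the Fundamental Theorem of Projective Geometry (Theorem~\ref{FTPG}) and Remark~\ref{semilinautoA} gives $\varphi|_{\mathbb{P}^{n-1}(\mathbb{F}_q)}=[A,\sigma]$, $\Phi$ is a homomorphism because restriction respects composition, and surjectivity comes from the automorphisms $j([A,\sigma])$ of Theorem~\ref{semilinear}. You also handle correctly two details the paper leaves implicit: that two-sided preservation of collinear triples makes the restriction map lines onto lines (by the $q+1$ count or via $f^{-1}$), and that the faithful action of $P\Gamma L_n(q)$ on points makes $\Phi$ well defined as a map into the abstract group.
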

\begin{proof}
By Theorem \ref{collinearity},
$\varphi|_{\mathbb{P}^{n-1}(\mathbb{F}_q)}
:
\mathbb{P}^{n-1}(\mathbb{F}_q)
\longrightarrow
\mathbb{P}^{n-1}(\mathbb{F}_q)
$
is a collineation of $\mathbb{P}^{n-1}(\mathbb{F}_q)$.
Since $n\geq3$, the projective space
$\mathbb{P}^{n-1}(\mathbb{F}_q)$ has dimension at least $2$.
Therefore, by the Fundamental Theorem of Projective Geometry
(Theorem \ref{FTPG}), there exists a semilinear automorphism
$T:\mathbb{F}_q^n\longrightarrow\mathbb{F}_q^n$
such that
$
\varphi(\langle v\rangle)=\langle T(v)\rangle
$ for every non-zero $v\in\mathbb{F}_q^n$.
Let $\sigma\in Aut(\mathbb{F}_q)$ be the associated
field automorphism of $T$. So by
Remark \ref{semilinautoA} there exists
$A\in GL_n(\mathbb{F}_q)$ such that
$T(v)=Av^\sigma$
for every $v\in\mathbb{F}_q^n$. Consequently,
$
\varphi(\langle v\rangle)
=
\langle Av^\sigma\rangle.
$
The projective transformation induced by $A$ depends only on the
class of $A$ in $PGL_n(\mathbb{F}_q)$. Hence
$
\varphi|_{\mathbb{P}^{n-1}(\mathbb{F}_q)}
\in
PGL_n(\mathbb{F}_q)\rtimes Aut(\mathbb{F}_q)
=
P\Gamma L_n(q).
$
We therefore obtain the well-defined restriction map
$
\Phi: Aut(\mathbf{SIG}(\mathbb{V}_0))
\longrightarrow
P\Gamma L_n(q),$ with
$
\Phi(\varphi)
=
\varphi|_{\mathbb{P}^{n-1}(\mathbb{F}_q)}.
$
It is a group homomorphism because, for
$\varphi,\psi\in Aut(\mathbf{SIG}(\mathbb{V}_0))$,
$$
\begin{aligned}
\Phi(\psi\circ\varphi)
&=
(\psi\circ\varphi)|_{\mathbb{P}^{n-1}(\mathbb{F}_q)}
&=
\psi|_{\mathbb{P}^{n-1}(\mathbb{F}_q)}
\circ
\varphi|_{\mathbb{P}^{n-1}(\mathbb{F}_q)}
&=
\Phi(\psi)\circ\Phi(\varphi).
\end{aligned}
$$
It remains to prove that $\Phi$ is surjective. Let
$[A,\sigma]\in P\Gamma L_n(q)$, where
$A\in$$GL_n(\mathbb{F}_q)$ and
$\sigma\in Aut(\mathbb{F}_q)$.  Define a map
$$
j([A,\sigma]):
\mathcal{E}^{n-1}(\mathbb{F}_q)
\cup
\mathbb{P}^{n-1}(\mathbb{F}_q)
\longrightarrow
\mathcal{E}^{n-1}(\mathbb{F}_q)
\cup
\mathbb{P}^{n-1}(\mathbb{F}_q)
$$
by $
j([A,\sigma])(\langle v\rangle)
=
\langle Av^\sigma\rangle
$ and $
j([A,\sigma])(\langle M\rangle)
=
\langle A M^\sigma A^{-1}\rangle.
$
By Theorem \ref{semilinear}, this map is an automorphism of
$\mathbf{SIG}(\mathbb{V}_0)$. Furthermore,
$
\Phi(j([A,\sigma]))
=
[A,\sigma].
$
Thus $\Phi$ is surjective. Hence $\Phi$ is a group epimorphism.
\end{proof}
   \begin{remark}\label{formproject}
   As we saw in the proof of Lemma \ref{corn3}, indeed the semidirect product
    $PGL_n(\mathbb{F}_q) \rtimes Aut(\mathbb{F}_q)$ is exactly the group of all
     semilinear projectivities of $\mathbb{P}^{n-1}(\mathbb{F}_q)$. So by Lemma \ref{corn3} every element of
    $PGL_n(\mathbb{F}_q) \rtimes Aut(\mathbb{F}_q)$ has the form $\varphi|_{\mathbb P^{n-1}(\mathbb{F}_q)}$
    for a $\varphi \in Aut(\mathbf{SIG}(\mathbb{V}_0))$
    and we use this form in the next Theorem.
   \end{remark}
 The assumption $n\geq3$ in Lemma \ref{corn3} is essential. When
$n=2$, the projective space is $\mathbb{P}^1(\mathbb{F}_q)$, and the
collinearity-preserving condition is not strong enough to force a
bijection of the projective points to be induced by a semilinear map.
Indeed, there are permutations of the projective points in
$\mathbb{P}^1(\mathbb{F}_q)$ that are not induced by semilinear maps.
As we will see in Lemmas \ref{extendS} and \ref{S_{q+1}} and
Theorem \ref{Autn=2}, such permutations can occur as the restrictions
of graph automorphisms of $\mathbf{SIG}(\mathbb{V}_0)$. In fact, when
$\mathbb{V}_0=\mathbb{F}_q^2$, every permutation of
$\mathbb{P}^1(\mathbb{F}_q)$ can be extended to an element of
$Aut(\mathbf{SIG}(\mathbb{V}_0))$, see Lemma \ref{extendS}.
\begin{theorem} \label{kernel} For $n\geq 3$ let $\mathcal{T}$ be the set of all twin classes
  in part $\mathcal{E}^{n-1}(\mathbb{F}_q)$. Consider the map:
{\footnotesize\[\Phi:Aut(\mathbf{SIG}(\mathbb{V}_0))\longrightarrow
PGL_n(\mathbb{F}_q) \rtimes Aut(\mathbb{F}_q),
\text{ with }\Phi(\varphi)= \varphi|_{\mathbb P^{n-1}(\mathbb {F}_q)}.\]}
Then $\Phi$ is a group epimorphism and the
 $ker(\Phi)\cong\prod_{\mathcal{C}\in \mathcal{T}}S_{\mathcal C}$
  is the direct product of  permutation groups on the twin classes in part
  $\mathcal{E}^{n-1}(\mathbb{F}_q)$.
\end{theorem}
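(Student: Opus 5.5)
The epimorphism part is already Lemma~\ref{corn3}, so I would start by recalling it. That leaves the kernel. My plan is to show two inclusions: every element of $\ker(\Phi)$ fixes each twin class of $\mathcal{E}^{n-1}(\mathbb{F}_q)$ setwise, and conversely every family of permutations of the twin classes gives an element of $\ker(\Phi)$. The identification with $\prod_{\mathcal C\in\mathcal T}S_{\mathcal C}$ then comes from restricting to each class.

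\textbf{Kernel elements preserve each twin class.} Take $\varphi\in\ker(\Phi)$, so $\varphi$ fixes every vertex of $\mathbb P^{n-1}(\mathbb F_q)$. By Lemma~\ref{part to part}, $\varphi$ maps $\mathcal{E}^{n-1}(\mathbb{F}_q)$ to itself. Adjacency is preserved, so for any matrix vertex $\langle M\rangle$,
\[
N(\varphi(\langle M\rangle))=\varphi(N(\langle M\rangle))=N(\langle M\rangle).
\]
Hence $\varphi(\langle M\rangle)\in t(\langle M\rangle)$, and $\varphi$ maps each class $\mathcal C\in\mathcal T$ into itself. Since $\varphi$ is a bijection and $\mathcal C$ is finite, the restriction $\varphi|_{\mathcal C}$ is a permutation of $\mathcal C$. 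Define
\[
\Psi:\ker(\Phi)\to\prod_{\mathcal C\in\mathcal T}S_{\mathcal C},\qquad \Psi(\varphi)=(\varphi|_{\mathcal C})_{\mathcal C\in\mathcal T}.
\]
Restriction commutes with composition, so $\Psi$ is a homomorphism. It is injective because $\varphi$ is the identity on the projective part, and the twin classes partition the matrix part; so $\varphi$ is determined by its restrictions to the classes.

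\textbf{Every tuple of permutations is realized.} Given $(\pi_{\mathcal C})_{\mathcal C\in\mathcal T}$, let $\tau$ be the identity on $\mathbb P^{n-1}(\mathbb F_q)$ and $\pi_{\mathcal C}$ on each $\mathcal C$. This is exactly the map described at the start of Section~\ref{section2}, which stabilizes every twin class and permutes each one. For completeness, here is why it is a graph automorphism: for a matrix vertex $x$ and a point $\langle v\rangle$, we have $x\sim\langle v\rangle$ iff $\langle v\rangle\in N(x)=N(\tau(x))$ iff $\tau(x)\sim\tau(\langle v\rangle)$. Also $\tau$ is a bijection, and it lies in $\ker(\Phi)$ by construction. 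So $\Psi$ is surjective, and therefore $\ker(\Phi)\cong\prod_{\mathcal C\in\mathcal T}S_{\mathcal C}$.

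\textbf{Remaining checks and the main obstacle.} The trivial class $\{\langle I_n\rangle\}$ from Theorem~\ref{twinpoints} contributes only a trivial factor $S_1$, which does no harm. There is no single hard step here. The point that needs the most care is the first inclusion: one must use that $\varphi$ fixes the projective part \emph{pointwise}, not just setwise, to conclude that neighbourhoods are literally equal rather than only equal in size. I would state this explicitly.
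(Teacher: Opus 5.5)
Your proposal is correct and follows the paper's proof essentially step for step. The epimorphism comes from Lemma~\ref{corn3}. Kernel elements preserve each twin class because $N(\varphi(\langle M\rangle))=\varphi(N(\langle M\rangle))=N(\langle M\rangle)$. Conversely, any tuple of permutations of the twin classes, extended by the identity on $\mathbb P^{n-1}(\mathbb F_q)$, is an automorphism lying in $\ker(\Phi)$. Your explicit restriction map $\Psi$ and its injectivity argument make precise the paper's phrase ``naturally identified with a subgroup,'' but the route is the same.
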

\begin{proof}
   By Remark \ref{formproject} and Lemma \ref{corn3} $\Phi$ is a group epimorphism. We have:
\[ker(\Phi)=\{\varphi\in Aut(\textbf{SIG}(\mathbb{V})):
\varphi(\langle v\rangle)=\langle v\rangle
\text{ for all }\langle v\rangle\in \mathbb{P}^{n-1}(\mathbb F_q)\}.
\]
This is the subgroup of $Aut(\textbf{SIG}(\mathbb{V}_0))$ fixing every
 projective point $\langle v\rangle\in \mathbb{P}^{n-1}(\mathbb{F}_q)$ and
  $ker(\Phi) \trianglelefteq Aut(\mathbf{SIG}(\mathbb{V}_0))$.
If $\varphi\in ker(\Phi) $, then for every matrix vertex $\langle M\rangle$, we have
$N(\varphi(\langle M\rangle))=\varphi(N(\langle M\rangle))=N(\langle M\rangle)$.
Hence elements of $ker(\Phi)$ can only permute vertices having the same eigenvector set.
Consider the twin calss
\[
t(\langle M\rangle)=
\{\langle A\rangle \in \mathcal{E}^{n-1}(\mathbb{F}_q) :
N(\langle A\rangle)=N(\langle M\rangle)\}.
\]
Therefore $\mathcal{T}=\{t(\langle M\rangle): \langle M\rangle\in \mathcal{E}^{n-1}(\mathbb{F}_q)\}$.
  Thus, $ker(\Phi)$ is naturally identified with a
  subgroup of $\prod_{\mathcal{C}\in \mathcal{T}}S_{\mathcal C}$.
Conversely, let
$
\tau\in\prod_{\mathcal C\in\mathcal T}S_{\mathcal C}
$
be arbitrary. Define a permutation $\widehat{\tau}$ of the vertices
of $\mathbf{SIG}(\mathbb{V}_0)$ by fixing every vertex of
$\mathbb{P}^{n-1}(\mathbb{F}_q)$ and for each twin class
$\mathcal C\in\mathcal T$, letting $\widehat{\tau}$ act on
$\mathcal C$ according to the permutation $\tau$ on ${\mathcal C}$.
We claim that $\widehat{\tau}$ is a graph automorphism. Indeed, if
$\langle M\rangle$ and $\langle M'\rangle$ belong to the same twin
class, then
$
N(\langle M\rangle)=N(\langle M'\rangle).
$
Hence
$
N(\widehat{\tau}(\langle M\rangle))
=
N(\langle M\rangle).
$
Since every projective vertex is fixed, adjacency is preserved, as
for every matrix vertex $\langle M\rangle$ and every projective
vertex $\langle v\rangle$,
$$
\langle M\rangle\sim\langle v\rangle
\iff
\langle v\rangle\in N(\langle M\rangle)
\iff
\langle v\rangle\in
N(\widehat{\tau}(\langle M\rangle))
\iff
\widehat{\tau}(\langle M\rangle)\sim\langle v\rangle.
$$
Thus $\widehat{\tau}\in Aut(\mathbf{SIG}(\mathbb{V}_0))$.
Moreover, $\widehat{\tau}$ fixes every projective point, so
$
\widehat{\tau}\in ker(\Phi).
$
Therefore every element of
$\prod_{\mathcal C\in\mathcal T}S_{\mathcal C}$ occurs in
$ker(\Phi)$, and hence
$
ker(\Phi)
=
\prod_{\mathcal C\in\mathcal T}S_{\mathcal C}.
$
This proves the result.
\end{proof}

\begin{remark}\label{T_i}  Let $\mathcal{T}$ be the set of all twin classes
  in the part $\mathcal{E}^{n-1}(\mathbb{F}_q)$. Then by Theorem \ref{twinpoints}  there exists an integer
  $m\in \mathbb{N}$  such that $\mathcal{T}=\mathcal{T}_1\cup \cdots \cup \mathcal{T}_m$
is a partition of $\mathcal{T}$, such that each $\mathcal{T}_i$ is the set of all twin classes
 of matrix vertices with the same degree( i.e., if  $A, B \in \mathcal{T}_i$
  are  two arbitrary  twin classes, then for every two vertices
   $\langle M\rangle\in A$ and $\langle M'\rangle\in B$
    we have $deg (\langle M\rangle)=deg(\langle M'\rangle)$ ).
 \end{remark}
For $n\geq 3$, the following theorem  determines the  automorphism
group of the graph $\mathbf{SIG}(\mathbb{V}_0)$. For $n=2$ it is done in Theorem \ref{Autn=2}.

\begin{theorem} \label{Autn-geq3}
Assume $n\geq 3$. Let $\mathcal{T}$ be the set of all twin classes
  in the part $\mathcal{E}^{n-1}(\mathbb{F}_q)$ and $\Phi$ the epimorphism stated in
 Theorem \emph{\ref{kernel}}.
 Then the sequence:
{\footnotesize\[
1\longrightarrow
\prod_{\mathcal C\in\mathcal T}
S_{\mathcal C}\overset{i}
\longrightarrow
Aut(\mathbf{SIG}(\mathbb V_0))
\overset{\Phi}{\longrightarrow}
PGL_n(\mathbb{F}_q) \rtimes Aut(\mathbb{F}_q)
\longrightarrow 1,
\]}
is a split short exact sequence of groups, where $i$ is the inclusion homomorphism. So
{\footnotesize\[
Aut(\mathbf{SIG}(\mathbb V_0))
\cong
\prod_{\mathcal C\in\mathcal T}
S_{\mathcal C}\underset{\theta}\rtimes P\Gamma L_n(q),
\]}
where $ P\Gamma L_n(q)= PGL_n(\mathbb F_q)\rtimes Aut(\mathbb F_q)$
and the action homomorphism
{\footnotesize \[
\theta:
P\Gamma L_n(q)\rightarrow Aut(\prod_{\mathcal C\in\mathcal T}
S_{\mathcal C})
\]} is given by the conjugation action :
{\footnotesize\[\theta([M,\alpha])(\tau)
=[M,\alpha]\tau[M,\alpha]^{-1} \text{ for every } \tau \in \prod_{\mathcal C\in\mathcal T}
S_{\mathcal C}.\]}
\end{theorem}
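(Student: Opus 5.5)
Exactness of the sequence is already contained in Theorem \ref{kernel}. There, $\Phi$ was shown to be an epimorphism onto $PGL_n(\mathbb F_q)\rtimes Aut(\mathbb F_q)$, and $ker(\Phi)$ was identified with $\prod_{\mathcal C\in\mathcal T}S_{\mathcal C}$, viewed as a subgroup of $Aut(\mathbf{SIG}(\mathbb V_0))$ through the maps $\tau\mapsto\widehat\tau$. So $i$ is injective with image $ker(\Phi)$, and $\Phi$ is onto. What remains is to show that the sequence splits and then to read off the semidirect product structure.

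For the splitting I will use the map $j:P\Gamma L_n(q)\to Aut(\mathbf{SIG}(\mathbb V_0))$ of Theorem \ref{semilinear}, given by
\[
j([A,\sigma])(\langle v\rangle)=\langle Av^\sigma\rangle,\qquad j([A,\sigma])(\langle M\rangle)=\langle AM^\sigma A^{-1}\rangle .
\]
The checks are as follows.

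\begin{enumerate}
\item $j$ is well defined on $PGL_n$ classes. Replacing $A$ by $cA$ changes neither $\langle cAv^\sigma\rangle$ nor $cAM^\sigma(cA)^{-1}=AM^\sigma A^{-1}$.
\item $j$ is a group homomorphism. This was verified in Theorem \ref{semilinear}.
\item $\Phi\circ j=\mathrm{id}_{P\Gamma L_n(q)}$. This holds because the restriction of $j([A,\sigma])$ to $\mathbb P^{n-1}(\mathbb F_q)$ is exactly the semilinear projectivity $[A,\sigma]$, as noted at the end of the proof of Lemma \ref{corn3}.
\end{enumerate}

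So $j$ is a section and the sequence is split.

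By the standard splitting lemma for groups (e.g.\ \cite[Section 5.5]{DF}), let $K=ker(\Phi)$ and $H=j(P\Gamma L_n(q))$. Then $K$ is normal, $K\cap H=1$ (an element of $H\cap K$ is $j(x)$ with $x=\Phi(j(x))=1$), and $KH=Aut(\mathbf{SIG}(\mathbb V_0))$, since $\varphi=(\varphi\, j(\Phi(\varphi))^{-1})\cdot j(\Phi(\varphi))$ with the first factor in $K$. Hence $Aut(\mathbf{SIG}(\mathbb V_0))\cong K\rtimes_\theta H$, where $\theta(h)(k)=hkh^{-1}$.

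I will also make this conjugation action explicit. For $\tau\in\prod_{\mathcal C}S_{\mathcal C}$, the automorphism $j([M,\alpha])\,\widehat\tau\, j([M,\alpha])^{-1}$ fixes every projective point. Since automorphisms preserve neighbourhoods, it carries the twin class $\mathcal C$ onto the twin class $j([M,\alpha])(\mathcal C)$, on which it acts by the transported permutation. Thus $\theta$ permutes the factors $S_{\mathcal C}$ according to the action of $P\Gamma L_n(q)$ on $\mathcal T$.

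The only point needing care is the well-definedness of $j$ on $PGL_n$ classes and the identity $\Phi\circ j=\mathrm{id}$. Both are direct computations, so I expect no real obstacle; the substantive work was already done in Theorems \ref{collinearity}, \ref{kernel} and Lemma \ref{corn3}.
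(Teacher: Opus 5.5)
Your proposal is correct and is essentially the paper's proof: exactness comes from Theorem \ref{kernel}, and the splitting is given by the monomorphism $j$ of Theorem \ref{semilinear} with $\Phi\circ j=\mathrm{id}_{P\Gamma L_n(q)}$, after which the splitting lemma gives the semidirect product with the conjugation action. Your extra checks only spell out what the paper leaves implicit: well-definedness of $j$ on $PGL_n$-classes, the explicit verification of $K\cap H=1$ and $KH=Aut(\mathbf{SIG}(\mathbb V_0))$, and the description of $\theta(h)$ as sending the factor $S_{\mathcal C}$ to $S_{j(h)(\mathcal C)}$ (note that the conjugate $j(h)\widehat\tau j(h)^{-1}$ itself still stabilizes every twin class).
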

\begin{proof}
By Theorem \ref{kernel}, $\Phi$ is surjective with kernel $\underset{\mathcal C\in\mathcal T}\prod
S_{\mathcal C}$. So the sequence is exact. By Theorem \ref{semilinear}
  $PGL_n(\mathbb{F}_q) \rtimes Aut(\mathbb{F}_q)$ is isomorphic to a subgroup of
  $Aut(\mathbf{SIG}(\mathbb{V}_0))$,  with the inclusion $j$, where
  $j([A,\sigma])(\langle M\rangle)=\langle AM^\sigma A^{-1}\rangle$ and
  $j([A,\sigma])(\langle v\rangle)=\langle Av^\sigma\rangle.$
   It was shown in Theorem \ref{semilinear} that $j([A, \sigma])\in Aut(\mathbf{SIG}(\mathbb{V}_0))$
 and $j$ is a group monomorphism.
    Note that $\Phi \circ j([A,\sigma])= \Phi([ A,\sigma])
     =[A,\sigma]$, for every $[A,\sigma]\in P\Gamma L_n(q)$.  So
   $ \Phi\circ j = id_{ P\Gamma L_n(q)}$
  and hence the sequence splits( see \cite[Definition in pages 383 and 384]{DF}).
 Therefore,{\small \[Aut(\mathbf{SIG}(\mathbb V_0))
\cong
\prod_{\mathcal C\in\mathcal T}
S_{\mathcal C}\underset{\theta}\rtimes P\Gamma L_n(q),\]}
and the action homomorphism
\[
\theta:
P\Gamma L_n(q)\rightarrow Aut(\prod_{\mathcal C\in\mathcal T}
S_{\mathcal C})\]
is given by the conjugation action $\theta([M,\alpha])(\tau)
=[M,\alpha]\tau[M,\alpha]^{-1}\in ker(\Phi)$, as $ker(\Phi)$ is a normal subgroup
 of $Aut(\mathbf{SIG}(\mathbb{V}_0))$ and this completes the proof.
\end{proof}
\begin{theorem}\label{extendS} Assume that $n=2$.
For every subset $X\subseteq \mathbb{P}^{1}(\mathbb{F}_q)$ with $1\leq$$|X|\leq $$2$,
 let $A_X= \{\langle M\rangle\in \mathcal{E}^1(\mathbb{F}_q): N(\langle M\rangle)= X\}$.
  By Lemma \emph{\ref{caseN(M)}},  $|A_X|=q$ for each such $X$.
For every such $X$, fix a labeling
$A_X=\{\langle M_{X,1}\rangle, \langle M_{X,2}\rangle,\ldots, \langle M_{X,q}\rangle\}$.
Then, for every permutation $\sigma\in S_{\mathbb{P}^{1}(\mathbb{F}_q)}$,
 there exists a permutation
$\widehat{\sigma}\in $$Aut(\mathbf{SIG}(\mathbb V_0))$ satisfying
$\widehat{\sigma}(\langle v\rangle)=\sigma(\langle v\rangle)$ for every
 $\langle v\rangle\in \mathbb{P}^{1}(\mathbb{F}_q) )$ and
$\widehat{\sigma}(\langle M_{X,i}\rangle)=\langle M_{\sigma(X),i}\rangle$
for every $1\leq i\leq q$, where
$\sigma(X)=\{\sigma(\langle v\rangle):\langle v\rangle\in  X\}$, and
$\widehat{\sigma}(\langle I_2\rangle)=\langle I_2\rangle$. Moreover, the map
$\Psi: S_{\mathbb{P}^{1}(\mathbb{F}_q)}\longrightarrow Aut(\mathbf{SIG}(\mathbb V_0))$
with $\Psi(\sigma)=\widehat{\sigma}$, is an injective group homomorphism. Consequently,
$S_{\mathbb{P}^{1}(\mathbb{F}_q)}\cong S_{q+1}$ is isomorphic to a subgroup of
$Aut(\mathbf{SIG}(\mathbb V_0))$.
\end{theorem}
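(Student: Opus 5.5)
The plan is to check directly that $\widehat{\sigma}$ is a well-defined bijection preserving adjacency, and then to verify the homomorphism and injectivity properties.

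\emph{Well-definedness.} By Lemma~\ref{caseN(M)}, the matrix part for $n=2$ is the disjoint union
\[
\mathcal{E}^1(\mathbb{F}_q)=\{\langle I_2\rangle\}\cup\bigcup_{X} A_X ,
\]
where $X$ ranges over all subsets of $\mathbb{P}^1(\mathbb{F}_q)$ of size $1$ or $2$. Each $A_X$ is exactly one twin class, of size $q$. Since $\sigma$ is a bijection of $\mathbb{P}^1(\mathbb{F}_q)$, we have $|\sigma(X)|=|X|$. Hence $A_{\sigma(X)}$ is again one of these classes, and the label $\langle M_{\sigma(X),i}\rangle$ makes sense. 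So $\widehat{\sigma}$ is defined on every vertex. It is a bijection because $X\mapsto\sigma(X)$ permutes the subsets of size $1$ and $2$, and $i\mapsto i$ is a bijection of labels. Together with $\langle I_2\rangle\mapsto\langle I_2\rangle$ and $\sigma$ on points, this gives a bijection of the vertex set.

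\emph{Adjacency.} Points are only adjacent to matrix vertices, so it suffices to check pairs $\langle M\rangle,\langle v\rangle$.

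If $\langle M\rangle=\langle I_2\rangle$, it is adjacent to every point, and so is its image.

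If $\langle M\rangle=\langle M_{X,i}\rangle$, then $N(\langle M\rangle)=X$, so $\langle M\rangle\sim\langle v\rangle$ iff $\langle v\rangle\in X$. Since $\sigma$ is a bijection, this holds iff $\sigma(\langle v\rangle)\in\sigma(X)$. That in turn holds iff $\sigma(\langle v\rangle)\in N(\langle M_{\sigma(X),i}\rangle)$, i.e.\ iff $\widehat{\sigma}(\langle M\rangle)\sim\widehat{\sigma}(\langle v\rangle)$.

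Hence $\widehat{\sigma}\in Aut(\mathbf{SIG}(\mathbb{V}_0))$.

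\emph{Homomorphism and injectivity.} For $\sigma,\pi\in S_{\mathbb{P}^1(\mathbb{F}_q)}$ we compare $\widehat{\sigma\pi}$ and $\widehat{\sigma}\widehat{\pi}$ on each type of vertex:
\begin{itemize}
\item on points, both act as $\sigma\pi$;
\item both fix $\langle I_2\rangle$;
\item on $\langle M_{X,i}\rangle$, we have $\widehat{\sigma}\widehat{\pi}(\langle M_{X,i}\rangle)=\widehat{\sigma}(\langle M_{\pi(X),i}\rangle)=\langle M_{\sigma\pi(X),i}\rangle=\widehat{\sigma\pi}(\langle M_{X,i}\rangle)$, using $\sigma(\pi(X))=(\sigma\pi)(X)$.
\end{itemize}
So $\Psi$ is a homomorphism. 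For injectivity, if $\widehat{\sigma}$ is the identity, its restriction to $\mathbb{P}^1(\mathbb{F}_q)$ is $\sigma$, so $\sigma=id$.

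Finally, $|\mathbb{P}^1(\mathbb{F}_q)|=q+1$ gives $S_{\mathbb{P}^1(\mathbb{F}_q)}\cong S_{q+1}$, which embeds via $\Psi$.

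\emph{Main point requiring care.} The key step is using the \emph{fixed} labelings consistently, so that the index $i$ is transported rather than chosen per $\sigma$. This is what makes $\Psi$ multiplicative. Everything also rests on the exhaustive classification in Lemma~\ref{caseN(M)}: every nonscalar matrix vertex lies in exactly one $A_X$ with $|X|\in\{1,2\}$, and $|A_X|=q$ for every such $X$.
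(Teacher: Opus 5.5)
Your proposal is correct and follows essentially the same route as the paper's proof. Both use Lemma~\ref{caseN(M)} to partition the matrix part into $\{\langle I_2\rangle\}$ and the classes $A_X$, define $\widehat{\sigma}$ by transporting the fixed labels, check adjacency via $\langle v\rangle\in X\iff\sigma(\langle v\rangle)\in\sigma(X)$, and verify multiplicativity and injectivity vertex-type by vertex-type.
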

\begin{proof}
Note that by Lemma~\ref{caseN(M)}, every non-scalar matrix vertex has a neighborhood of
cardinality 1 or 2, and hence the set $$\{A_X: X\subseteq \mathbb{P}^{1}(\mathbb{F}_q)\text{ and }
1\le |X|\le 2\}$$ is a partition of the non-scalar matrix vertices into non-trivial twin classes.
Moreover, $A_X\cap A_Y=\varnothing$  whenever $X\ne Y,$ for every such $X$ and $Y$.\par
Now, let $\sigma\in S_{\mathbb{P}^{1}(\mathbb{F}_q)}$.
 We define a permutation $\widehat{\sigma}$ of the vertices of $\mathbf{SIG}(\mathbb V_0)$ as follows.
First, on the part $\mathbb{P}^{1}(\mathbb{F}_q)$, define
$\widehat{\sigma}(\langle v\rangle)=\sigma(\langle v\rangle)$,
for every $ \langle v\rangle\in \mathbb{P}^{1}(\mathbb{F}_q)$.
For every subset $X\subseteq \mathbb{P}^{1}(\mathbb{F}_q)$ with $1\leq |X|\leq2$ and every
$1\leq i\leq q$, define
$\widehat{\sigma}(\langle M_{X,i}\rangle) = \langle M_{\sigma(X),i}\rangle$.
Finally, fix the unique singleton twin class consisting of the scalar matrix vertex:
$\widehat{\sigma}(\langle I_2\rangle) = \langle I_2\rangle$.
Since $\sigma$ permutes the subsets of $\mathbb{P}^{1}(\mathbb{F}_q)$ of each cardinality of 1 and 2,
$\widehat{\sigma}$ is a bijection on the vertex set of the graph.
We show that $\widehat{\sigma}$ preserves adjacency.
 The scalar vertex $\langle I_2\rangle$ is adjacent to every projective vertex, and hence
  fixing it preserves all its incident edges.
Now consider a non-scalar matrix vertex $\langle M_{X,i}\rangle$ and
a projective vertex $\langle v\rangle\in \mathbb{P}^{1}(\mathbb{F}_q) $.
By the definition of the twin class $A_X$,
$ \langle M_{X,i}\rangle \sim \langle v\rangle \Longleftrightarrow \langle v\rangle\in X$.
Therefore,
\[\begin{aligned} \langle M_{X,i}\rangle \sim \langle v\rangle &\Longleftrightarrow \langle v\rangle\in X\\
&\Longleftrightarrow \sigma(\langle v\rangle)\in\sigma(X)\\
&\Longleftrightarrow \langle M_{\sigma(X),i}\rangle \sim\sigma(\langle v\rangle)\\
&\Longleftrightarrow \widehat{\sigma}(\langle M_{X,i}\rangle) \sim \widehat{\sigma}(\langle v\rangle).
 \end{aligned}\]
 Thus $\widehat{\sigma}$ preserves adjacency and is consequently an automorphism of
 $\mathbf{SIG}(\mathbb V_0)$.
 Let $\sigma,\tau\in S_{\mathbb{P}^{1}(\mathbb{F}_q)}$. For every
  $\langle v\rangle\in \mathbb{P}^{1}(\mathbb{F}_q)$,
 \[\widehat{\sigma}\widehat{\tau}(\langle v\rangle) =
  \widehat{\sigma}(\tau(\langle v\rangle)) =
  \sigma(\tau(\langle v\rangle))=
   (\sigma\tau)(\langle v\rangle)=
   \widehat{\sigma\tau}(\langle v\rangle).\]
   For every matrix vertex $\langle M_{X,i}\rangle$,
\[\begin{aligned} \widehat{\sigma}\widehat{\tau}(\langle M_{X,i}\rangle) =
\widehat{\sigma}(\langle M_{\tau(X),i}\rangle)
 = \langle M_{\sigma(\tau(X)),i}\rangle
 = \langle M_{(\sigma\tau)(X),i}\rangle
 = \widehat{\sigma\tau}(\langle M_{X,i}\rangle).
  \end{aligned}\]
Both maps also fix $\langle I_2\rangle$. Hence
$\widehat{\sigma}\widehat{\tau} = \widehat{\sigma\tau}$.
Thus $\Psi$ is a group homomorphism.
Finally, if $\Psi(\sigma)=\Psi(\tau)$, then their restrictions to $\mathbb{P}^{1}(\mathbb{F}_q)$ are equal.
 Hence $\sigma(\langle v\rangle)=\tau(\langle v\rangle)\text{ for every }\langle v\rangle\in
  \mathbb{P}^{1}(\mathbb{F}_q)$,
 and therefore $\sigma=\tau$. Thus $\Psi$ is injective. Since $|\mathbb{P}^{1}(\mathbb{F}_q)|=q+1$, we have
$S_{\mathbb{P}^{1}(\mathbb{F}_q)}\cong S_{q+1}$, which proves the result.
\end{proof}
\begin{lemma}\label{S_{q+1}} Let $n=2$. Then the map
$\Phi:Aut(\mathbf{SIG}(\mathbb V_0))\longrightarrow
S_{\mathbb P^{1}(\mathbb F_q)},$
 with $\Phi(\varphi)= \varphi|_{\mathbb P^{1}(\mathbb {F}_q)}$ is a group epimorphism and
 $\frac{Aut(\mathbf{SIG}(\mathbb{V}_0))}{\prod_{\mathcal{C}\in
  \mathcal{T}}S_{\mathcal C}}\cong S_{q+1} $, where $\mathcal{T}$ is the set of
all twin classes in the part $\mathcal{E}^1(\mathbb{F}_q)$.
\end{lemma}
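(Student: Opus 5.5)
The plan is to prove three things in turn: that $\Phi$ is a well-defined homomorphism, that it is surjective, and that its kernel is $\prod_{\mathcal C\in\mathcal T}S_{\mathcal C}$. The quotient isomorphism then follows from the First Isomorphism Theorem.

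\textbf{Well-definedness and homomorphism.} By Lemma~\ref{part to part}, every $\varphi\in Aut(\mathbf{SIG}(\mathbb V_0))$ maps $\mathbb P^1(\mathbb F_q)$ bijectively onto itself. Hence $\varphi|_{\mathbb P^1(\mathbb F_q)}\in S_{\mathbb P^1(\mathbb F_q)}$, so $\Phi$ is well defined. Restriction commutes with composition, exactly as in the proof of Lemma~\ref{corn3}, so $\Phi$ is a group homomorphism.

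\textbf{Surjectivity.} Let $\sigma\in S_{\mathbb P^1(\mathbb F_q)}$. Theorem~\ref{extendS} supplies $\widehat\sigma=\Psi(\sigma)\in Aut(\mathbf{SIG}(\mathbb V_0))$ whose restriction to $\mathbb P^1(\mathbb F_q)$ is $\sigma$. Thus $\Phi\circ\Psi=id$, and $\Phi$ is an epimorphism. No collineation argument is needed for $n=2$; this is precisely where the case differs from Lemma~\ref{corn3}.

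\textbf{Kernel.} I will reuse the argument of Theorem~\ref{kernel} verbatim, noting that it never uses $n\geq3$. Let $\varphi\in ker(\Phi)$. Then $\varphi$ fixes every projective vertex, and $\varphi$ preserves the parts. So for each matrix vertex,
\[
N(\varphi(\langle M\rangle))=\varphi(N(\langle M\rangle))=N(\langle M\rangle).
\]
Hence $\varphi$ maps each twin class $\mathcal C\in\mathcal T$ onto itself. This gives an injective homomorphism $ker(\Phi)\to\prod_{\mathcal C\in\mathcal T}S_{\mathcal C}$; it is injective because $\varphi$ is determined by its action on matrix vertices once it is trivial on points.

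Conversely, take any $\tau\in\prod_{\mathcal C\in\mathcal T}S_{\mathcal C}$. Extending it by the identity on $\mathbb P^1(\mathbb F_q)$ gives $\widehat\tau$. The displayed equivalence in the proof of Theorem~\ref{kernel} shows that $\widehat\tau$ is an automorphism, and it lies in $ker(\Phi)$. Therefore $ker(\Phi)=\prod_{\mathcal C\in\mathcal T}S_{\mathcal C}$.

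For later use, Lemma~\ref{caseN(M)} makes the factors explicit. They are $\{\langle I_2\rangle\}$, which contributes a trivial $S_1$; the $q+1$ classes $A_{\{\langle v\rangle\}}$; and the $\binom{q+1}{2}$ classes $A_{\{\langle u\rangle,\langle v\rangle\}}$, each of size $q$.

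\textbf{Conclusion.} By the First Isomorphism Theorem,
\[
Aut(\mathbf{SIG}(\mathbb V_0))\big/\textstyle\prod_{\mathcal C\in\mathcal T}S_{\mathcal C}\;\cong\; S_{\mathbb P^1(\mathbb F_q)}\;\cong\; S_{q+1},
\]
since $|\mathbb P^1(\mathbb F_q)|=q+1$.

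\textbf{Main obstacle.} The only real care point is identifying $ker(\Phi)$ with the product as an internal subgroup. One must check that the map $\varphi\mapsto(\varphi|_{\mathcal C})_{\mathcal C}$ is a bijective homomorphism rather than merely an abstract embedding. Both directions follow from the twin-class stability shown above, so no genuine difficulty is expected.
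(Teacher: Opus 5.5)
Your proposal is correct and follows essentially the same route as the paper. The paper also gets surjectivity from Lemma~\ref{part to part} together with Theorem~\ref{extendS}, identifies $ker(\Phi)$ with $\prod_{\mathcal C\in\mathcal T}S_{\mathcal C}$ by the argument of Theorem~\ref{kernel} (which does not use $n\geq 3$), and concludes with the First Isomorphism Theorem; you simply make explicit the well-definedness, homomorphism and two-inclusion details that the paper leaves implicit.
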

\begin{proof}
Using Lemma \ref{part to part} and Theorem \ref{extendS} clearly $\Phi$  is surjective
and since $|\mathbb{P}^1(\mathbb{F}_q)|=q+1$, we
have $S_{\mathbb{P}^1(\mathbb{F}_q)}\cong S_{q+1}$. Now,
similarly to the proof of Theorem \ref{kernel} we have
$ker(\Phi)=\prod_{\mathcal{C}\in \mathcal{T}}S_{\mathcal C}$ and 
 $Im(\Phi)= S_{\mathbb{P}^1(\mathbb{F}_q)}\cong S_{q+1}$, so
by the First Isomorphism Theorem for groups
 we have: \[\frac{Aut(\mathbf{SIG}(\mathbb V_0))}{\prod_{\mathcal{C}\in
  \mathcal{T}}S_{\mathcal C}}\cong S_{\mathbb{P}^1(\mathbb{F}_q)}\cong S_{q+1}.\]
\end{proof}
 \begin{theorem}\label{Autn=2}
Let $n=2$. Then there exists a split short exact sequence
{\footnotesize\[
1\longrightarrow
\left(
\prod_{\mathcal C\in\mathcal T_1}
S_{\mathcal C}
\times
\prod_{\mathcal C\in\mathcal T_2}
S_{\mathcal C}
\right)
\overset{i}\longrightarrow
Aut(\mathbf{SIG}(\mathbb V_0))
\overset{\Phi}{\longrightarrow}
S_{\mathbb P^1(\mathbb F_q)}
\longrightarrow 1,
\]}
where $\mathcal T_1$ is the set of all twin classes in the part
$\mathcal E^1(\mathbb F_q)$ whose vertices have exactly one
eigendirection with $|\mathcal{T}_1|=q+1$ and $\mathcal T_2$ is the set of all twin classes in the part
$\mathcal E^1(\mathbb F_q)$ whose
vertices have exactly two distinct eigendirections with $|\mathcal{T}_2|=\frac{q(q+1)}{2}$. Consequently,
{\footnotesize\[Aut(\mathbf{SIG}(\mathbb V_0))
\cong
\left(
\prod_{\mathcal C\in\mathcal T_1}
S_q
\times
\prod_{\mathcal C\in\mathcal T_2}
S_q
\right)
\underset{\theta}\rtimes S_{q+1}
=\left(
\prod_{i=1}^{q+1}S_q
\times
\prod_{i=1}^{\frac{q(q+1)}{2}}S_q
\right)
\underset{\theta}\rtimes S_{q+1}
,\]}
 where  the action defining the semidirect product is
{\footnotesize \[ \theta:S_{q+1}\cong S_{\mathbb P^1(\mathbb F_q)}\longrightarrow Aut\left(
\prod_{\mathcal C\in\mathcal T_1}S_q
\times
\prod_{\mathcal C\in\mathcal T_2}S_q
\right).\]}
is given by  conjugation: $\theta(\sigma)(\kappa)=\widehat{\sigma}\kappa{\widehat{\sigma}}^{-1}$ and
 $\widehat{\sigma}$ is an extension  of $\sigma$ to a graph automorphism of $\mathbf{SIG}(\mathbb{V}_0)$ as
 in Theorem \ref{extendS}.
\end{theorem}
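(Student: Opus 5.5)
The plan is to assemble the statement from Lemma \ref{caseN(M)}, Theorem \ref{extendS} and Lemma \ref{S_{q+1}}, following the template of Theorem \ref{Autn-geq3}.

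\emph{Step 1: exactness.} By Lemma \ref{S_{q+1}}, $\Phi$ is an epimorphism onto $S_{\mathbb P^1(\mathbb F_q)}$ with kernel $\prod_{\mathcal C\in\mathcal T}S_{\mathcal C}$. The kernel description comes from the argument of Theorem \ref{kernel}, which used no hypothesis on $n$: a kernel element fixes every point, hence preserves neighbourhoods, hence permutes each twin class; conversely, every product of permutations of twin classes is an automorphism fixing $\mathbb P^1(\mathbb F_q)$ pointwise. So the sequence is exact once $\mathcal T$ is identified with $\{\{\langle I_2\rangle\}\}\cup\mathcal T_1\cup\mathcal T_2$. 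The singleton class contributes the trivial group $S_1$ and can be dropped.

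\emph{Step 2: the twin classes.} By Lemma \ref{caseN(M)}(4), every non-scalar matrix vertex has neighbourhood of size $1$ or $2$. By Lemma \ref{twinmatvert}, twin classes correspond to neighbourhoods. Parts (1) and (2) of Lemma \ref{caseN(M)} show that every subset $X\subseteq\mathbb P^1(\mathbb F_q)$ with $|X|\in\{1,2\}$ occurs as a neighbourhood, and the corresponding class $A_X$ has exactly $q$ elements. Therefore
\[
|\mathcal T_1|=q+1,\qquad |\mathcal T_2|=\binom{q+1}{2}=\tfrac{q(q+1)}{2},
\]
and $S_{\mathcal C}\cong S_q$ for each $\mathcal C\in\mathcal T_1\cup\mathcal T_2$. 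As a sanity check, these classes cover
\[
1+q(q+1)+q\cdot\tfrac{q(q+1)}{2}
\]
matrix vertices. I will compare this with $|\mathcal E^1(\mathbb F_q)|$, counting nonzero matrices with an eigenvector modulo scalars; an inclusion–exclusion count as in Theorem \ref{count} should match.

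\emph{Step 3: splitting.} Theorem \ref{extendS} provides an injective homomorphism $\Psi:S_{\mathbb P^1(\mathbb F_q)}\to Aut(\mathbf{SIG}(\mathbb V_0))$ with $\widehat\sigma|_{\mathbb P^1(\mathbb F_q)}=\sigma$. Hence $\Phi\circ\Psi=id$ and the sequence splits. The standard splitting lemma (as cited from \cite{DF} in Theorem \ref{Autn-geq3}) then gives
\[
Aut(\mathbf{SIG}(\mathbb V_0))\cong \ker\Phi\rtimes_\theta \Psi(S_{q+1}),
\]
with $\theta(\sigma)(\kappa)=\widehat\sigma\kappa\widehat\sigma^{-1}$. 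This conjugate lies in $\ker\Phi$ because the kernel is normal.

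It is worth noting concretely that $\widehat\sigma$ carries the class $A_X$ to $A_{\sigma(X)}$ via the fixed labellings. So $\theta(\sigma)$ permutes the $S_q$ factors, sending the factor indexed by $X$ to the factor indexed by $\sigma(X)$, while preserving $\mathcal T_1$ and $\mathcal T_2$ separately; in this sense the group is a wreath-like product.

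The main obstacle is bookkeeping rather than ideas. It lies in Step 2: ensuring that the classes $A_X$ exhaust the non-scalar vertices, and that no two distinct $X$ give the same class. Both follow directly from Lemma \ref{twinmatvert} and Lemma \ref{caseN(M)}(4).
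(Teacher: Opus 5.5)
Your proposal is correct and follows essentially the same route as the paper. Exactness comes from Lemma~\ref{S_{q+1}}, using the kernel argument of Theorem~\ref{kernel}, which does not depend on $n$. The counts $|\mathcal T_1|=q+1$ and $|\mathcal T_2|=\binom{q+1}{2}$, each class of size $q$, come from Lemma~\ref{caseN(M)}. The splitting comes from the section $\Psi$ of Theorem~\ref{extendS}, with $\Phi\circ\Psi=\mathrm{id}$. Two additions of yours are also correct: the remark that $\theta(\sigma)$ sends the $S_q$ factor indexed by $X$ to the one indexed by $\sigma(X)$ (a wreath-type product, which the paper leaves implicit), and your sanity count $1+q(q+1)+q^2(q+1)/2$, which does equal $|\mathcal E^1(\mathbb F_q)|$.
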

\begin{proof}
Let $
\Phi:
Aut(\mathbf{SIG}(\mathbb V_0))
\longrightarrow
S_{\mathbb P^1(\mathbb F_q)}
$
be the restriction homomorphism defined by
$\Phi(\varphi)
=\varphi|_{\mathbb P^1(\mathbb F_q)}.$
By Lemma \ref{S_{q+1}}, $Im(\Phi)= S_{\mathbb P^1(\mathbb F_q)}
\cong S_{q+1},$ hence $\Phi$ is an epimorphism.
Also by Lemma \ref{S_{q+1}}, $ker(\Phi)
=\prod_{\mathcal C\in\mathcal T}
S_{\mathcal C},$
where $\mathcal T$ is the set of all twin classes in the part
$\mathcal E^1(\mathbb F_q)$.
By Lemma \ref{caseN(M)}, we have two possible types of  nontrivial twin classes of matrix vertices.\\
\medskip
\noindent
\textbf{Case 1:}
$|N(\langle M\rangle)|=1$. In this case $M$ has exactly one eigendirection. By Lemma \ref{caseN(M)},
for every $\langle v\rangle\in\mathbb P^1(\mathbb F_q)$
there are exactly $q$ matrix vertices having neighborhood $\{\langle v\rangle\}$
and hence these matrix vertices make a twin class
 of cardinality $q$.  Thus $|\mathcal{T}_1|=\begin{pmatrix}
                     |\mathbb P^1(\mathbb F_q)|\\
                     1
         \end{pmatrix}= \begin{pmatrix}
                     q+1\\
                     1
         \end{pmatrix}=q+1$.
 Note that for every twin class $\mathcal{C}$ in $\mathcal T_1$
   we have $|\mathcal C|=q$ and hence $S_{\mathcal C}\cong S_q.$\\
\noindent
\textbf{Case 2:}
$|N(\langle M\rangle)|=2$.
In this case $M$ has two distinct eigenvalues and two eigendirections.
By Lemma \ref{caseN(M)}
for each pair $\{\langle u\rangle,\langle v\rangle\}$
of distinct points of $\mathbb P^1(\mathbb F_q)$,
 there exist exactly
$q$ matrix vertices having the neighborhood $\{\langle u\rangle,\langle v\rangle\}$. Thus $|\mathcal{T}_2|=\begin{pmatrix}
                     q+1\\
                     2
         \end{pmatrix}=\frac{(q+1)!}{2!(q-1)!}=\frac{q(q+1)}{2}$.  Since every
  twin class $\mathcal{C}$ in $\mathcal T_2$ has cardinality $q$, we have
$S_{\mathcal C}\cong S_q.$  The scalar matrix class gives only the single vertex
$\langle I_2\rangle$ and contributes no nontrivial factor.
Thus
{\footnotesize \[
ker(\Phi)
\cong
\left(
\prod_{\mathcal C\in\mathcal T_1}S_q
\times
\prod_{\mathcal C\in\mathcal T_2}S_q
\right)=\left(
\prod_{i=1}^{q+1}S_q\times \prod_{i=1}^{\frac{q(q+1)}{2}}S_q
\right).\]}
Consequently we have the short exact sequence of groups:
\[
1\longrightarrow
ker(\Phi)
\overset{i}\longrightarrow
Aut(\mathbf{SIG}(\mathbb V_0))
\overset{\Phi}{\longrightarrow}
S_{\mathbb P^1(\mathbb F_q)}
\longrightarrow1 .
\]
It remains to show that this sequence splits.
By Theorem \ref{extendS}, the group
$S_{\mathbb P^1(\mathbb F_q)}$  is isomorphic to a subgroup of
$ Aut(\mathbf{SIG}(\mathbb V_0))$.
By Theorem \ref{extendS}
the map
\[\Psi:S_{\mathbb{P}^1(\mathbb{F}_q)}\cong S_{q+1}\longrightarrow Aut(\mathbf{SIG}(\mathbb{V}_0))\] is a
group monomorphism and it is easy to see  that $\Phi\circ \Psi= id_{S_{\mathbb P^1(\mathbb F_q)}}.$
Hence the short exact sequence of groups splits and therefore,
{\footnotesize\[Aut(\mathbf{SIG}(\mathbb V_0))
\cong
ker(\Phi)\rtimes S_{\mathbb P^1(\mathbb F_q)}
\cong\left(
\prod_{i=1}^{q+1}S_q\times \prod_{i=1}^{\frac{q(q+1)}{2}}S_q
\right)
\underset{\theta}\rtimes S_{q+1}.
\]}
 The action defining the semidirect product is
 {\footnotesize\[ \theta:S_{q+1}\cong S_{\mathbb{P}^1(\mathbb{F}_q)}\longrightarrow \operatorname{Aut}\left(
\prod_{\mathcal C\in\mathcal T_1}S_q
\times
\prod_{\mathcal C\in\mathcal T_2}S_q
\right).\]}
Note that
$ker(\Phi)\cong
\left(
\prod_{\mathcal C\in\mathcal T_1}S_q
\times
\prod_{\mathcal C\in\mathcal T_2}S_q
\right)$ is a normal subgroup of the group $Aut(\mathbf{SIG}(\mathbb{V}_0))$,
 so by conjugation we have $\theta(\sigma)(\kappa)=\widehat{\sigma}\kappa\widehat{\sigma}^{-1}\in ker(\Phi),$
  for all $k\in ker(\Phi)$ and  hence  $\theta(\sigma)\in Aut(ker(\Phi))$,
where $\widehat{\sigma}$ is the extension of $\sigma$  to $Aut(\mathbf{SIG}(\mathbb{V}_0))$
as in Theorem \ref{extendS}.
\end{proof}
\section{Declarations}
\textbf{Conflict of interest:} The author has no relevant financial or non-financial interests to disclose.\\
\bibliographystyle{elsarticle-num}
 
 \end{document}